\documentclass[reqno,centertags,11pt]{amsart}
\usepackage{verbatim}
\usepackage{tikz, enumerate, array, amssymb, amsmath}
\usepackage{graphicx, epsfig }
\usepackage{amsthm}
\usepackage{amsbsy}
\usepackage{amsfonts}
\usepackage[colorlinks]{hyperref}

\usepackage{mathtools}

\addtolength{\textwidth}{1in}
\addtolength{\oddsidemargin}{-0.5in}
\addtolength{\evensidemargin}{-0.5in}
\addtolength{\textheight}{0.8in}
\addtolength{\voffset}{-0.4in}



\newcommand{\Hmm}[1]{\leavevmode{\marginpar{\tiny%
$\hbox to 0mm{\hspace*{-0.5mm}$\leftarrow$\hss}%
\vcenter{\vrule depth 0.1mm height 0.1mm width \the\marginparwidth}%
\hbox to 0mm{\hss$\rightarrow$\hspace*{-0.5mm}}$\\\relax\raggedright #1}}}




\newcommand{\N}{{\mathbb{N}}}

\newcommand{\R}{{\mathbb{R}}}

\newcommand{\C}{{\mathbb{C}}}

\newcommand{\blue}{\color{blue} }


\newcommand{\f}{\frac}

\newcommand{\ol}{\overline}

\newcommand{\wti}{\widetilde}

\newcommand{\oh}{o}

\newcommand{\loc}{\text{\rm{loc}}}

\newcommand{\hatt}{\widehat}
\newcommand{\beq}{\begin{equation}}
\newcommand{\eeq}{\end{equation}}
\newcommand{\bdm}{\begin{displaymath}}
\newcommand{\edm}{\end{displaymath}}
\newcommand{\ba}{\begin{align}}
\newcommand{\ea}{\end{align}}
\newcommand{\bpf}{\begin{proof}}
\newcommand{\epf}{\end{proof}}

\newcommand{\la}{\langle}
\newcommand{\ra}{\rangle}


\newcommand{\supp}{\mathrm{supp}\, }               

\newcommand{\veps}{\varepsilon}

\newcommand{\re}{\mathrm{Re}}
\newcommand{\im}{\mathrm{Im}}

\newcommand{\dav}{{d_{\mathrm{av}}}}




\newcommand{\id}{\mathbf{1}}                





\allowdisplaybreaks

\newcommand{\calC}{\mathcal{C}}

\newcommand{\calQ}{\mathcal{Q}}

\newcommand{\calS}{{\mathcal{S}}}


\newtheorem{theorem}{Theorem}
\newtheorem{proposition}[theorem]{Proposition}
\newtheorem{lemma}[theorem]{Lemma}
\newtheorem{corollary}[theorem]{Corollary}

\theoremstyle{definition}

\newtheorem{remark}[theorem]{Remark}
\newtheorem{remarks}[theorem]{Remarks}


\newcounter{theoremi}[theorem]

\newcommand{\itemthm}{\refstepcounter{theoremi} {\rm(\roman{theoremi})}{~}}

\numberwithin{theorem}{section}
\numberwithin{equation}{section}

%
\newcounter{assumptions}
\newenvironment{assumptions}{\begin{list}{\textbf{A\rm\arabic{assumptions}})}{%
\setlength{\topsep}{0mm}\setlength{\parsep}{0mm}\setlength{\itemsep}{0mm}%
\setlength{\labelwidth}{2em}\setlength{\leftmargin}{2em}\usecounter{assumptions}%
}}{\end{list}}

\newcounter{saveassumptions}
\newcounter{smalllist}

\newcounter{listi}
\newenvironment{theoremlist}{\begin{list}{{\rm(\roman{listi})}}{%
\setlength{\topsep}{0mm}\setlength{\parsep}{0mm}\setlength{\itemsep}{0mm}%
\setlength{\labelwidth}{1.5em}\setlength{\leftmargin}{1.7em}\usecounter{listi}%
}}{\end{list}}

%
\newcounter{smallenum}

%


\begin{document}

\title[Well--posedness of DMNLS]{Well--posedness of dispersion managed nonlinear Schr\"odinger equations}
\author[M.--R.~Choi, D.~Hundertmark,  Y.--R.~Lee]{Mi--Ran Choi, Dirk Hundertmark,  Young--Ran~Lee}
\address{Department of Mathematics, Sogang University,  35 Baekbeom-ro (Sinsu--dong),
    Mapo-gu, Seoul, 04107, South Korea.}%
\email{mrchoi@sogang.ac.kr}
\address{Department of Mathematics, Institute for Analysis, Karlsruhe Institute of Technology, 76128 Karlsruhe, Germany, and
Department of Mathematics
University of Illinois at Urbana-Champaign
1409 W. Green Street
Urbana, Illinois 61801-2975  }%
\email{dirk.hundertmark@kit.edu}%
\address{Department of Mathematics, Sogang University,  35 Baekbeom-ro (Sinsu--dong),
    Mapo-gu, Seoul, 04107, South Korea.}%
\email{younglee@sogang.ac.kr}

\thanks{\textit{MSC2020 classification}. 35Q55, 35Q60, 35A01, 35B35, 35B30.}
\thanks{\copyright 2022 by the authors. Faithful reproduction of this article,
       in its entirety, by any means is permitted for non-commercial purposes}

\keywords{nonlocal NLS, dispersion management, well--posedness, orbital stability}

\date{\today}

\begin{abstract}
  We prove local and global well--posedness results for
  the Gabitov--Turitsyn or dispersion managed nonlinear
  Schr\"odinger equation with a large class of nonlinearities and
  arbitrary average dispersion on $L^2(\R)$ and $H^1(\R)$ for
  zero and non--zero average dispersions, respectively.
  Moreover,  when the average dispersion is non--negative,
  we show that the set of ground states is orbitally stable.
  This covers the case of
  non--saturated and saturated nonlinear polarizations and yields,
  for saturated nonlinearities, the first proof of orbital stability.
\end{abstract}

\maketitle
{\hypersetup{linkcolor=black}
\setcounter{tocdepth}{1}
\tableofcontents}

\section{Introduction}\label{introduction}
\subsection{The Cauchy problem}
We prove local and global existence results for the initial value
problem for a dispersion managed nonlinear Schr\"odinger equation (NLS)
\beq\label{eq:main}
\begin{cases}
   \displaystyle{i\partial_t u+ \dav \partial_x^2u+ \int _\R T_r^{-1}(P(T_r u))\psi(r)dr=0},\\
   u(x,0)=u_0(x) ,
\end{cases}
\eeq
for a large class of nonlinear polarizations $P$
where $u=u(x,t)$, $x, t\in \R$, is a complex-valued function,  $\dav \in \R$, $\psi \ge 0$ is in $L^q(\R)$ for suitable $q \ge 1$, and $T_r=e^{ir\partial_x^2}$ is the solution operator for the free Schr\"{o}dinger equation, that is,  $w(x,r)=(T_rf)(x)$ solves the initial value problem
\bdm
   \begin{cases}
   i\partial_r w+ \partial_x^2w=0,\\
   w(x,0)=f(x).
\end{cases}
\edm
The case $\dav=0$ is a \emph{singular limit}.
Positive average dispersion, $\dav>0$, corresponds to a
\emph{focusing} nonlinearity, while $\dav<0$ corresponds to a \emph{defocusing} nonlinearity, see Remark \ref{rem:focusing-defocusing}.
In the application of \eqref{eq:main} in nonlinear optics, $t$ corresponds to the distance along the fiber and $x$ denotes the (retarded) time.
The name ``dispersion management" refers to the fact that the equation
\eqref{eq:main} models the propagation of signals through glass--fiber
cables where the local dispersive properties vary periodically between
strongly positive and strongly negative dispersion, with some small
average dispersion $\dav$, along the cable. It is an effective
equation  describing the electromagnetic wave propagation in optical
fibers in the
so--called \emph{strong dispersion management regime}.
See Section  \ref{sec:connection} for a short discussion on how the
probability density $\psi$ is determined from the
local dispersion profile in dispersion managed glass fiber cables.

The technique of dispersion management was invented to balance
the competing effects of nonlinearity and dispersion. It has led to
new type of glass--fiber cables for ultra--high speed data transfer through optical fiber over long distances.
The dispersion managed NLS has intensively been studied, mainly on a
non--rigorous level starting with \cite{AB98,GT96a,GT96b}, see also
the survey \cite{TBF}  and references therein. There are much fewer rigorous
results available, e.g.,
\cite{ChoiHuLee2016, EHL2009, GH, HuLee2009, HuLee2012, MH1, Stanislavova05, ZGJT01}. 
The Kerr--type nonlinearity, i.e.,  the case when $P(z)= |z|^2z$, was originally studied by Gabitov and Turitsyn in \cite{GT96a, GT96b} and is
also assumed in much of the rigorous and non--rigorous work.
It corresponds to taking a Taylor series expansion of the polarization $P$
in the glass--fiber cable and keeping the only the first non--trivial term.
We will not make this simplifying assumption in our paper but consider a
rather large class of nonlinearities, instead.

We assume that the nonlinearity $ P: \C \to \C$ in \eqref{eq:main} is of
the form of $P(z)= h(|z|)z$.
Our main assumptions on $h: [0,\infty)\to\R $ are:

\begin{assumptions}
\item \textbf{Local and global well--posedness, zero average dispersion:}\label{ass: dav zero}
    $h$ is continuous on $[0,\infty)$ and continuously differentiable on
    $(0,\infty)$ with $\lim_{a\to 0}h'(a)a=0$. There exists
    $0\le  p \le 4$ such that
    \begin{equation} \label{eq:ass dav zero} \
     \begin{split}
	   |h(a)|&\lesssim 1+a^{p}\, \quad\text{for all }  a\ge 0, \\
	   |h'(a)|&\lesssim a^{-1}+ a^{p-1} \,  \quad\text{for all }  a> 0.
     \end{split}
    \end{equation}
\item \textbf{Local well--posedness, non--zero average dispersion:}\label{ass: dav not zero}
$h$ is  continuous on $[0,\infty)$ and continuously differentiable on $(0,\infty)$ with $\lim_{a\to 0}h'(a)a=0$. There exist increasing functions
$J_1, J_2:[0,\infty)\to [0,\infty)$ such that
\begin{equation} \label{eq:ass dav not zero}
  \begin{split}
	|h(a)|&\le J_1(a) \,  \quad\text{ for all }  a\ge 0, \\
	|h'(a)|&\le J_2(a)(1+a^{-1}) \,  \quad\text{ for all }  a> 0.
  \end{split}
\end{equation}
\end{assumptions}
\setcounter{saveassumptions}{\value{assumptions}}
For global well--posedness, when the average dispersion is non--zero,
we need to assume in addition
\begin{assumptions}
\setcounter{assumptions}{\value{saveassumptions}}
\item \textbf{Global well--posedness, non--zero average dispersion:}\label{ass: dav not zero global}
 For an increasing function  $\wti{J}:[0,\infty)\to [0,\infty)$ with
\begin{equation}\label{eq: growth condition wtiJ}
	\lim_{a\to\infty} \frac{\wti{J}(a)}{a^4} = 0
\end{equation}
and for some $0\le p\le 4$ the nonlinearity $h$ satisfies
\begin{equation}
  \begin{split}
	& h(a)\le \wti{J}(a)(1+a^p) \,  \text{ for all }  a\ge 0, \text{ when }\dav >0,  \\
	& h(a)\ge  -\wti{J}(a)(1+a^p) \, \text{ for all }  a\ge 0,  \text{ when }\dav <0.
  \end{split}
\end{equation}
\end{assumptions}
\setcounter{saveassumptions}{\value{assumptions}}
Above, we use the convention $f\lesssim g$, if there exists a finite constant $C>0$ such that $f \le Cg$.

\begin{remarks}
 \begin{theoremlist}
\item
  In assumption \ref{ass: dav zero}, the growth condition
  \eqref{eq:ass dav zero} on $h$ is consistent
  with the fact that the nonlocal nonlinearity in \eqref{eq:main}
  is bounded on $L^2(\R)$ for $P(z)=|z|^pz$ and $\psi\in L^\frac{4}{4-p}$
  only for $0\le p\le 4$, see Lemma \ref{lem:L^2 bound}.
  However, the assumption on $h'$ in \eqref{eq:ass dav zero} is
  rather weak, allowing a blow--up of $h'$ for small $a$.
  For example, our assumptions cover even highly oscillating
  nonlinearities of the form
  \begin{equation}\label{eq:oscillating example}
  	h(a)= a^{\delta}\sin\left(  \frac{1}{a^{\kappa}}\right)
  \end{equation}
  with $h(0)=0$ and $0<\kappa<\delta\le 4$.
  Also sign changing
  polarizations of the form
  \begin{equation}\label{eq:sign changing example}
  	P(z) = c_1|z|^{p_1}z - c_2|z|^{p_2}z
  \end{equation}
  for $c_1,c_2>0$ and exponents $0\le p_1\le p_2\le 4$ are covered by assumption \ref{ass: dav zero}.
\item
  Assumption \ref{ass: dav not zero} is even weaker:
  $h$ only has to be locally bounded, without any growth condition
  at infinity and the possibility of large oscillations of $h'(a)$ for
  small and large values of $a$.
  The example \eqref{eq:oscillating example} satisfies assumption
  \ref{ass: dav not zero} for all $0<\kappa<\delta$ and also assumption
  \ref{ass: dav not zero global} for all $0<\kappa<\delta <8$.
  The example \eqref{eq:sign changing example} satisfies assumption
  \ref{ass: dav not zero}  for all $0\le p_1\le p_2<\infty$ and
  \ref{ass: dav not zero global} for all $0\le p_1\le p_2<8$.
  Polarizations given by the power law $P(z)=|z|^pz$ satisfy assumption \ref{ass: dav not zero} for all $p \ge 0$.
  Since $h(a)= a^p$ is bounded from below in this case,
  assumption \ref{ass: dav not zero global} is also satisfied for all
  $p>0$ when $\dav<0$.
  However, to satisfy assumption \ref{ass: dav not zero global}, when
  $\dav>0$, we need to restrict to $p<8$ for power law polarizations.
 \item
  Moreover, assumptions \ref{ass: dav zero}, \ref{ass: dav not zero}, and \ref{ass: dav not zero global} allow for saturated nonlinearities.
  For example, the polarization is allowed to be of the form
  \begin{equation*}
  	P(z)=\f{|z|^2z}{1+\sigma |z|^2}\, ,
  \end{equation*}
  with $\sigma>0$. In this case, we have $h(a)= a^2/(1+\sigma a^2)$, i.e., $h$ is  asymptotically constant for
  large values of $a$.
\end{theoremlist}
\end{remarks}

Before presenting our main results, we make the notion of a solution more precise, see \cite{Cazenave,Tao}:
Let $X_1,X_2$ be Banach spaces.
A function $u: \R \times  [-M_-,M_+]\to \C$, for some positive $M_\pm$, is called a (local) strong solution of \eqref{eq:main} if  $u\in \calC([-M_-, M_+], X_1)\cap \calC^{1}((-M_-, M_+), X_2)$ satisfies the equation
\begin{equation}\label{eq:strong-solution-1}
	i\partial_t u+ \dav \partial_x^2u+ Q(u)=0 \notag
\end{equation}
and $u(\cdot,0)= u_0$, where the
nonlocal nonlinearity $Q$ is given by
\beq
 	Q(u(t)):=\int_\R T_r^{-1}(P(T_r u(t)) )\psi(r)dr. \notag
\eeq

If $\dav\not =0$, we take $X_1=H^1(\R)$ and $X_2= H^{-1}(\R)$,
for a definition of the scale of Sobolev spaces $H^s(\R)$ see the next section. This is motivated by the fact that under suitable conditions on the nonlinearity, see Lemma \ref{lem:nonlinear H^1 bound},  $Q$ maps $H^1(\R)$ into itself and thus, if $u(t)\in H^1(\R)$ solves \eqref{eq:main}, then $\partial_t u(t)\in H^{-1}(\R)$.
If $\dav=0$, then we take $X_1=X_2=L^2(\R)$, since, under suitable conditions on the nonlinearity, $Q$ maps $L^2(\R)$ into itself, see Lemma \ref{lem:L^2 bound}.

It is well--known that $u$ is
a strong solution of \eqref{eq:main} with initial datum $u_0$ if
and only if $u\in \calC([-M_-, M_+], H^1(\R))$ for some positive
$M_\pm$ and $u$ fulfills  the Duhamel formula
  \begin{equation}\label{eq: Duhamel u}
  u(t) = e^{it \dav \partial _x^2}u_0 +i \int _0 ^t e^{i(t-t')\dav \partial _x^2} Q(u(t'))\, \mathrm{d}t'
  \end{equation}
for all $t\in [-M_-, M_+]$, see \cite[Proposition 3.1.3]{Cazenave} and, also, \cite{Ball,Tao}.
It is a global strong solution, if $[M_-,M_+]$ can be replaced
by $\R$.
In the following, we will mainly work
with the integral version \eqref{eq: Duhamel u}
instead of \eqref{eq:main}.

The Cauchy problem \eqref{eq:main}, or better the integral equation
\eqref{eq: Duhamel u}, is \emph{locally well--posed} in $H^1(\R)$
for $\dav \neq 0$ if, for any
initial data $u_0\in H^1(\R)$, there exist times $M_\pm>0$
and a ball $B$ in $H^1(\R)$ containing $u_0$ that for each
$\phi\in B$ there exists a unique  strong solution
$u\in \calC([-M_-,M_+], H^1(\R))$ of \eqref{eq: Duhamel u} with initial datum $\phi$
and the map $\phi\mapsto u$ is continuous from $B$ to $\calC([-M_-,M_+], H^1(\R))$.
It is \emph{globally well--posed} in $H^1(\R)$ if we
can take $M_\pm$ arbitrary large.
For $\dav=0$, we replace $H^1(\R)$ by $L^2(\R)$.

\medskip
For zero average dispersion, we get global
well--posedness just assuming \ref{ass: dav zero}.
\begin{theorem}[Global well--posedness in $L^2(\R)$ for $\dav=0$]\label{thm:well-posedness in L^2}
Let  $h$ satisfy assumption \ref{ass: dav zero} and
$\psi\in L^1 (\R) \cap L^{\frac{4}{4-p}}(\R)$.
Then the Cauchy problem \eqref{eq:main} is globally well--posed
in $ L^2(\R)$
and the mass is conserved.
\end{theorem}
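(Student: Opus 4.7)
When $\dav=0$ the linear propagator in \eqref{eq: Duhamel u} is the identity, so the Duhamel formula collapses to the Banach-space-valued integral equation
\beq\label{eq:sketch-duhamel-L2}
	u(t) = u_0 + i\int_0^t Q(u(t'))\,dt',
\eeq
and a strong $L^2$-solution of \eqref{eq:main} is simply a fixed point of the right--hand side. The plan is to treat \eqref{eq:sketch-duhamel-L2} as an ODE in $L^2(\R)$ by Picard iteration, then use conservation of mass to extend local solutions to global ones.

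First I would invoke Lemma \ref{lem:L^2 bound} (which, under assumption \ref{ass: dav zero} and $\psi\in L^1\cap L^{4/(4-p)}$, gives that $Q:L^2(\R)\to L^2(\R)$ is well--defined and locally Lipschitz). The growth $|h(a)|\lesssim 1+a^p$ and the derivative bound $|h'(a)|\lesssim a^{-1}+a^{p-1}$ with $\lim_{a\to 0}h'(a)a=0$ are exactly what is needed to control $P(z)=h(|z|)z$ and its variation in $L^2$, together with unitarity of $T_r$ on $L^2(\R)$ and a H\"older argument against $\psi\in L^{4/(4-p)}$. Given this, for each $u_0\in L^2(\R)$ and any ball $B\subset L^2(\R)$ around $u_0$, the map
\[
	\Phi(u)(t) := u_0 + i\int_0^t Q(u(t'))\,dt'
\]
is a contraction on $\calC([-M_-,M_+],L^2(\R))$ provided $M_\pm>0$ is chosen small enough (depending only on $\sup_{\phi\in B}\|\phi\|_{L^2}$). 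Banach's fixed point theorem then yields existence, uniqueness, and continuous dependence on $u_0\in B$, giving local well-posedness.

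Next I would establish mass conservation. For a strong solution $u\in\calC([-M_-,M_+],L^2)\cap\calC^1((-M_-,M_+),L^2)$ one computes
\[
	\tfrac{d}{dt}\|u(t)\|_{L^2}^2 = 2\re\langle u(t),\pa_t u(t)\rangle = 2\re\langle u(t), iQ(u(t))\rangle = -2\im\langle u(t),Q(u(t))\rangle.
\]
Using that $T_r$ is unitary on $L^2(\R)$ together with Fubini,
\[
	\langle u, Q(u)\rangle = \int_\R \langle T_r u, P(T_r u)\rangle\psi(r)\,dr = \int_\R\!\int_\R h(|T_r u(x)|)|T_r u(x)|^2\,dx\,\psi(r)\,dr,
\]
which is real because $h$ is real-valued. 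Hence $\|u(t)\|_{L^2}=\|u_0\|_{L^2}$ for all $t$ in the existence interval.

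Finally, the local existence time obtained from the contraction depends only on $\|u_0\|_{L^2}$; since mass is conserved, we can iterate the local construction indefinitely without the $L^2$-norm growing, which upgrades the local solution to a global one. The main technical point where one has to be careful is the locally Lipschitz property of $Q$ on $L^2(\R)$ under the very weak assumption \ref{ass: dav zero} on $h'$ (blowup at $0$ and possible oscillations as in \eqref{eq:oscillating example}); once Lemma \ref{lem:L^2 bound} is in place, the remaining argument is a standard Picard--plus--a priori bound scheme.
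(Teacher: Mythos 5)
Your proposal is correct and follows essentially the same route as the paper: local existence via a contraction argument built on Lemma \ref{lem:L^2 bound}, mass conservation computed directly from $\partial_t u = iQ(u)\in L^2(\R)$ and the realness of $\langle u, Q(u)\rangle$, and globalization by iterating the local construction (the paper phrases this through the blowup alternative of Corollary \ref{cor:maximal existence in L^2}). The only cosmetic difference is that the paper derives continuous dependence via an explicit Gronwall estimate (Proposition \ref{prop:continuous dependence}) rather than appealing to the abstract contraction principle, a point it itself notes is interchangeable.
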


For non--zero average dispersion we get local well--posedness under assumption \ref{ass: dav not zero}.
\begin{theorem}[Local well--posedness in $H^1(\R)$ for $\dav\not=0$] \label{thm:local well-posedness in H^1}
 Let $h$ satisfy assumption
 \ref{ass: dav not zero} and $\psi\in L^1(\R)$.
 Then  the Cauchy problem \eqref{eq:main} is locally
 well--posed in $H^1(\R)$
 and the mass and energy are conserved.
\end{theorem}

When the average dispersion is not zero we need assumptions
\ref{ass: dav not zero} and \ref{ass: dav not zero global}
for global existence.
\begin{theorem}[Global well--posedness in $H^1(\R)$ for $\dav\not=0$] \label{thm:global well-posedness in H^1}
 Let $h$ satisfy assumptions
 \ref{ass: dav not zero}, \ref{ass: dav not zero global}
 and $\psi\in L^1(\R)\cap L^{\frac{4}{4-p}}(\R)$.
 Then  the Cauchy problem \eqref{eq:main} is globally
 well--posed in $H^1(\R)$
 and the mass and energy are conserved.
\end{theorem}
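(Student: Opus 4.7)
The plan is to upgrade the local solution from Theorem \ref{thm:local-posedness in H^1} to a global one via the standard blow-up alternative, for which I need two conservation laws (mass and energy) together with a non-trivial a priori $H^1$ estimate coming from the subcritical growth encoded in assumption \ref{ass: dav not zero global}.

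\medskip

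\textbf{Step 1 (Mass conservation).} Writing the nonlinearity as $Q(u)=\int_\R T_r^{-1}(h(|T_r u|)T_r u)\psi(r)\,dr$, unitarity of $T_r$ on $L^2(\R)$ gives
\begin{equation*}
  \langle u, Q(u)\rangle = \int_\R \psi(r)\int_\R |T_r u(x)|^2 h(|T_r u(x)|)\,dx\,dr\in\R.
\end{equation*}
Hence $\operatorname{Im}\langle u,Q(u)\rangle=0$, and testing the equation in Duhamel form \eqref{eq: Duhamel u} against $u$ (or differentiating $\|u(t)\|_2^2$) yields $\|u(t)\|_2=\|u_0\|_2$.

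\medskip

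\textbf{Step 2 (Energy conservation).} Define $G(a):=\int_0^a h(s)s\,ds$ and the potential energy
\begin{equation*}
  \Phi(u):=\int_\R \psi(r)\int_\R G(|T_r u(x)|)\,dx\,dr,
\end{equation*}
and set $E(u):=\tfrac{\dav}{2}\|\partial_x u\|_2^2-\Phi(u)$. A direct computation shows that the Fréchet derivative of $\Phi$ at $u$ is $Q(u)$, so that \eqref{eq:main} is the Hamiltonian flow generated by $E$. One then checks that $\tfrac{d}{dt}E(u(t))=\operatorname{Re}\langle -\dav\partial_x^2 u-Q(u),\partial_t u\rangle=\operatorname{Re}\langle i\partial_t u,\partial_t u\rangle=0$ for $H^1$-solutions; that the nonlinear term is differentiable along the flow uses assumption \ref{ass: dav not zero} and the mapping properties of $Q$ on $H^1$ from Lemma \ref{lem:nonlinear H^1 bound}. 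The regularization/mollification argument needed to justify integration by parts at the $H^{-1}$ level is standard and follows \cite[Proposition 3.1.3]{Cazenave}.

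\medskip

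\textbf{Step 3 (A priori $H^1$ bound, the main point).} I will combine assumption \ref{ass: dav not zero global} with one-dimensional Gagliardo--Nirenberg. Consider the focusing case $\dav>0$; the defocusing case is symmetric. Using $h(a)\le\wti{J}(a)(1+a^p)$ and monotonicity of $\wti{J}$ yields the pointwise bound
\begin{equation*}
  G(a)\le \tfrac{1}{2}\wti{J}(a)\,a^2+\tfrac{1}{p+2}\wti{J}(a)\,a^{p+2}.
\end{equation*}
The growth assumption $\wti{J}(a)/a^{4}\to 0$ lets me split, for any $\eps>0$, $\wti{J}(a)\le C_\eps+\eps a^{4}$, so
\begin{equation*}
  G(a)\lesssim C_\eps(a^2+a^{p+2})+\eps(a^{6}+a^{p+6}).
\end{equation*}
Since $T_r$ is isometric on $L^2(\R)$ and on $\dot H^1(\R)$, applying Gagliardo--Nirenberg in $x$ (for each fixed $r$) gives
\begin{equation*}
  \|T_r u\|_{q}^{q}\lesssim \|u\|_2^{(q+2)/2}\|\partial_x u\|_2^{(q-2)/2},\qquad q\ge 2,
\end{equation*}
and integrating in $r$ against $\psi\in L^1\cap L^{4/(4-p)}$ controls the low-order terms, where the $L^{4/(4-p)}$ integrability of $\psi$ is used exactly for the critical exponent $q=p+2$ via Hölder in $r$. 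The high-order terms ($q=6,\,p+6$) produce $\|\partial_x u\|_2^2$ with the small coefficient $\eps$ (plus polynomial factors of $\|u\|_2$), by further interpolation using conserved mass. Combining,
\begin{equation*}
  \Phi(u(t))\le \tfrac{\dav}{4}\|\partial_x u(t)\|_2^2+C(\|u_0\|_2,\psi).
\end{equation*}
Energy conservation then gives $\tfrac{\dav}{4}\|\partial_x u(t)\|_2^2\le E(u_0)+C(\|u_0\|_2,\psi)$, i.e. a uniform in time $H^1$ bound.

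\medskip

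\textbf{Step 4 (Globalization).} Theorem \ref{thm:local-posedness in H^1} yields a maximal existence interval $(-T_-,T_+)$ with the standard blow-up alternative $T_+<\infty\Rightarrow \limsup_{t\nearrow T_+}\|u(t)\|_{H^1}=\infty$. Steps 1--3 rule this out, so $T_\pm=\infty$. Continuity of the flow on $H^1$ on every finite interval then follows from the local well-posedness statement together with the a priori bound. The delicate point I expect to have to work out carefully is Step 3: both verifying that the decomposition $\wti{J}(a)\le C_\eps+\eps a^4$ indeed extracts an $\eps$-small coefficient in front of $\|\partial_x u\|_2^2$ after the $r$-integration against $\psi$, and justifying energy conservation rigorously at the level of strong $H^1$--solutions, since the nonlinearity $Q$ is nonlocal and $h$ is only assumed continuously differentiable with a possibly singular derivative at zero.
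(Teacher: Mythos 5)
Your overall architecture (local theory $+$ mass/energy conservation $+$ a priori $H^1$ bound $+$ blow--up alternative) matches the paper, but Step 2 has a genuine gap: you justify energy conservation by ``the regularization/mollification argument \ldots following \cite[Proposition 3.1.3]{Cazenave}'', and that argument is precisely what fails under assumption \ref{ass: dav not zero}. The standard route approximates $u_0$ by data in $H^s$, $s>1$, and needs persistence of that regularity, which requires controlling $\partial_x^2 P(T_r u)$ and hence more differentiability of $h$ than is assumed here ($h'$ may oscillate wildly near $0$, e.g.\ $h(a)=a^{\delta}\sin(a^{-\kappa})$, and $h''$ need not exist). You correctly flag this as ``the delicate point'', but you do not resolve it. The paper's resolution is a \emph{twisting trick}: set $v(t)=e^{-it\dav\partial_x^2}u(t)$. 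Since $Q$ maps $H^1$ into $H^1$ (Lemma \ref{lem:nonlinear H^1 bound}), the Duhamel formula for $v$ shows $\dot v(t)=ie^{-it\dav\partial_x^2}Q(u(t))\in H^1(\R)$, so $v\in\calC^1$ with values in $H^1$ and $\|\partial_x u(t)\|^2=\|\partial_x v(t)\|^2$ is honestly differentiable, with derivative $-2\im\la\partial_x u,\partial_x Q(u)\ra$ — no pairing of two $H^{-1}$ objects ever occurs. For the potential term one writes $\partial_t N(u(t))=\re\la(1-\partial_x)^{-1}\dot u,(1+\partial_x)Q(u)\ra$, which extends by density to $u\in\calC(H^1)\cap\calC^1(H^{-1})$ because $(1-\partial_x)^{-1}\dot u\in L^2$ and $(1+\partial_x)Q(u)\in L^2$; the two contributions then cancel. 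Without this (or an equivalent device), your Step 2 does not go through under the stated hypotheses. (Your Step 1 is fine as written, since $\la u,\partial_t u\ra$ is a legitimate $H^1$--$H^{-1}$ pairing.)

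On Step 3 you take a different, but workable, decomposition: you split $\wti J(a)\le C_\eps+\eps a^4$ and absorb an $\eps\|\partial_x u\|^2$ term, whereas the paper bounds $\wti J(|T_ru|)\le\wti J\bigl((\|\partial_x u\|\|u\|)^{1/2}\bigr)$ pointwise, pulls it out of the integral, and uses only the derivative--free Strichartz bound of Lemma \ref{lem:L^2boundedness} to get $N(u)\lesssim\wti J\bigl((\|\partial_x u\|\|u_0\|)^{1/2}\bigr)(\|u_0\|^2+\|u_0\|^{p+2})=\oh(\|\partial_x u\|^2)$, which is cleaner. One caveat on your version: for the term $q=p+6$ with $p>0$, plain Gagliardo--Nirenberg in $x$ followed by integration against $\psi\in L^1$ gives $\|\partial_x u\|^{(p+4)/2}$, a \emph{superquadratic} power that $\eps$ cannot absorb; you must instead peel off $|T_ru|^4\le\|\partial_x u\|^2\|u\|^2$ in $L^\infty$ and apply the $L^{p+2}$ Strichartz bound with $\psi\in L^{4/(4-p)}$ to the remainder (this is exactly Lemma \ref{lem:H^1-boundedness} with $\kappa=4$). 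Your phrase ``further interpolation using conserved mass'' needs to be made precise along these lines.
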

The mass, which in nonlinear optics is the power of the pulse, is given by
\begin{align} \label{mass}
	m(u(t))\coloneqq \|u(t)\|_{L^2}^2\, .
\end{align}
The energy is given by
\begin{align} \label{energy}
	E(u(t))\coloneqq \f{\dav}{2} \|\partial_x u (t)\|_{L^2}^2-\iint_{\R^2} V(|T_r u(t)|)\,dx\psi(r)dr\, ,
\end{align}
where $V(a)=\int_0^a P(s)\, \mathrm{d}s=\int_0^a h(s)s\, \mathrm{d}s$ for $a\ge0$.

\begin{remark}
  For zero--average dispersion, any solution of
  \eqref{eq: Duhamel u} in the space $\calC(\R, L^2(\R))$
  is continuously differentiable, i.e., in
  $\calC^1(\R, L^2(\R))$. Similarly, for $\dav\neq 0$,
  any solution of   \eqref{eq: Duhamel u} in
  $ \calC(\R, H^1(\R))$ is in the space
  $\calC(\R, H^1(\R))\, \cap \, \calC^1(\R, H^{-1}(\R))$.
  Thus our results also show what is known as
  \emph{unconditional uniqueness} in
  the literature.
  For a Kerr--type polarization $P(z)= |z|^2z$, global
  well--posedness was
  proven in \cite{AK, ZGJT01}. The assumptions of
  \cite{AK} on the local dispersion profile imply that
  $\psi\in L^\infty(\R)$ and has compact support.
  Our Theorems \ref{thm:well-posedness in L^2},
  \ref{thm:local well-posedness in H^1},
  and \ref{thm:global well-posedness in H^1} show that
  local as well as global well--posedness still holds for a much larger class of
  nonlinearities with minimal smoothness assumptions on the nonlinearity.
  We can also allow for a larger class of dispersion profiles in the
  dispersion managed NLS.
 \end{remark}
  While we need assumption \ref{ass: dav not zero global} in the
  proof of Theorem \ref{thm:global well-posedness in H^1},
  we also have a global well--posedness result for \emph{small initial data} just assuming   \ref{ass: dav not zero}.
\begin{theorem}[Small data global well--posedness]\label{thm: small data global well-posedness}
	Let $\dav\not=0$ and  $h$ satisfy assumption
	\ref{ass: dav not zero}.
	\begin{theoremlist}
		\item For any initial datum
			$u_0\in H^1(\R)$ with small enough $H^1$-norm,
			 the Cauchy problem \eqref{eq:main} is globally well--posed
			 when $\psi\in L^1(\R)$.
		\item If $J_1(a) \lesssim 1+a^8 $ for $a\ge 0$, then
			 the Cauchy problem \eqref{eq:main} is globally well--posed
			 for initial conditions
  			$u_0\in H^1(\R)$ with $\|u_0\|_{L^2}$ small enough when
  			$\psi\in L^\infty(\R)\cap L^1(\R)$.
		\item If $\lim_{a\to 0}J_1(a)/a^4=0$ then
			 the Cauchy problem \eqref{eq:main} is globally well--posed
			 for initial conditions
  			$u_0\in H^1(\R)$ with $\|u_0'\|$ small enough
  			{\rm(}depending on $\|u_0\|${\rm)} when  $\psi\in L^1(\R)$.
	\end{theoremlist}
\end{theorem}

\begin{remark}
	We note that in $L^2(\R)$ there are initial conditions $u_0\in H^1(\R)$ with
	$\|u_0\|_{L^2}$ large and $\|u_0'\|_{L^2}$ arbitrarily small.
	An example is given by
	$u_\delta(x)=C(3\delta/2)^{1/2}(1-\delta|x|)_+$ for small
	$\delta>0$ and $C>0$. Then $\|u_\delta\|_{L^2}=C$ and
	$\|u_\delta'\|_{L^2} = \sqrt{3}C\delta\to 0$ as $\delta\to 0$.
	Thus the last statement in  Theorem
	\ref{thm: small data global well-posedness} is not empty.
\end{remark}

For orbital stability, we consider ground states of \eqref{eq:main},
that is,
stationary, or standing wave, solutions of \eqref{eq:main} in the form
$u(x,t)= e^{-i\omega t}f(x)$ with minimal energy.
These are given by  minimizers of the  nonlocal nonlinear
constrained variational problem
\beq\label{constrained variational problem}
E_\lambda^{\dav}=\inf \{ E(f):\; f\in X, \|f\|^2_{L^2}=\lambda\} \notag
\eeq
where $\lambda>0$, $\dav\geq 0$, and $X=L^2(\R)$ for $\dav=0$; $X=H^1(\R)$
for $\dav>0$.
When $\dav<0$ the `kinetic energy' term in the energy is
non--positive, so one should maximize
the `energy' when $\dav<0$.
Equivalently, one could keep the sign of $\dav$ positive
and flip the sign of the nonlinearity. Thus the case $\dav<0$
corresponds to `defocusing nonlinearities', where, at least
for Kerr--type nonlinearities one does not expect to have
stationary solutions, see Remark \ref{rem:defocussing case}.
Thus, for orbital stability, we only consider $\dav\ge 0$.
If $\dav> 0$, assumption \ref{ass: dav not zero global} implies
that the energy is coercive, see \eqref{eq:coercivity-2}.
Every nonlinear ground state $f$ weekly solves the equation
\beq\label{eq: stationary problem}
\omega f =-\dav f'' -\int _\R T_r ^{-1} (P(T_r f))\psi (r)dr  \notag
\eeq
for some Lagrange multiplier $\omega$.

We denote by $S_\lambda^{\dav}$ the set of all ground states
\begin{equation}\label{eq:ground state set}
  S_\lambda^{\dav}=\{f\in X: \; E(f)=E_{\lambda}^{\dav}, \|f\|^2_{L^2}
  =\lambda \},
\end{equation}
for $\lambda>0$ and $\dav\geq0$.
To get orbital stability of $S^\dav_\lambda$, even
to guarantee that   $S^\dav_\lambda\not=\emptyset$,
see Theorem \ref{thm:existence}, we need additional assumptions.
We distinguish between saturated and non--saturated
nonlinearites.

\smallskip
\begin{assumptions}
\setcounter{assumptions}{\value{saveassumptions}}
\item\label{ass:A4} \textbf{Non--saturated nonlinearity:} There exists a constant $p_0>2$ with
	\begin{equation}\label{eq:strong A-R}
		h(a)a^2 \ge p_0 \int_0^a h(s)s\, \mathrm{d}s \text{  for all } a>0\, .
	\end{equation}
\item\label{ass:A5}  \textbf{Saturated nonlinearity:}
	There exists a continuous function
	$p:[0,\infty)\to (2,\infty)$, where we allow
	$\lim_{a\to\infty}p(a)=2$, such that
		\begin{equation}\label{eq:saturating A-R}
			h(a)a^2 \ge p(a) \int_0^a h(s)s\, \mathrm{d}s\text{ for all } a>0.
		\end{equation}
\item\label{ass:A6} There exists $a_0>0$ with
	$\int_0^{a_0} h(s)s\, \mathrm{d}s>0$.
\end{assumptions}

\begin{remarks}
  \begin{theoremlist}
    \item
    	In terms of
		$V(a) =\int_0^a P(s)\, \mathrm{d}s = \int_0^a h(s)s\, \mathrm{d}s$
		the condition \eqref{eq:strong A-R} is equivalent to
		$V'(a)a\ge p_0 V(a)$ for $a> 0$. This is the well-known
		Ambrosetti--Rabinowitz condition from the calculous of
		variations \cite{AR}.  Condition \eqref{eq:saturating A-R}
		is a weakened version of the classical Ambrosetti--Rabinowitz
		condition which allows for \emph{saturating nonlinearities}.
		That the  variational approach for constructing nonlinear
		ground states also works under such a weaker condition is
		less known, see \cite{HLRZ} for the case of dispersion
		management solitons.

    	Condition \ref{ass:A6} together with \ref{ass:A5} (or \ref{ass:A4}) guarantees that the nonlinearity
	is positive for large $a$. Indeed, let
	$V(a)= \int_0^{a} h(s)s\, \mathrm{d}s$.
	Since $V$ is continuous and $V(a_0)>0$, there exists
	$a_0<d\le \infty$ such that $V(a)>0$ for $a_0\le a<d$ and $V(d)=0$
	if $d<\infty$. The assumption \ref{ass:A5} is equivalent to
	\begin{align*}
		\frac{V'(a)}{V(a)} \ge p(a) a^{-1} \quad \text{ for } a_0<a<d
	\end{align*}
	and integrating this from $a_0$ to $a$ shows
    \begin{align}\label{eq:quadratic lower bound for V}
		V(a) \ge V(a_0) \exp\left( \int_{a_0}^a p(s)\, \frac{\mathrm{d}s}{s} \right)	
			> V(a_0) \left(\frac{a}{a_0}\right)^2
  	\end{align}
  	since $p(a)>2$. Thus $V$ grows at least quadratically near
  	infinity, hence $d=\infty$. Moreover, \eqref{eq:saturating A-R}
  	shows
  	\begin{align*}
  	h(a)a^2 \ge p(a) V(a) >2 V(a_0)\left(\frac{a}{a_0}\right)^2
  	\end{align*}
  	so $h(a)> 2 V(a_0) a_0^{-2}>0$ for all $a\ge a_0$.

  Note that when  $\lim_{a\to\infty} p(a)=2$ the condition
  \ref{ass:A5} allows the nonlinearity
  to \emph{saturate}.
  The bound \eqref{eq:quadratic lower bound for V} is the best one
  can get, since $V(a)$ will asymptotically quadratically when
  $p(a)\to 2$ fast enough as $a\to\infty$. In this case,
  the nonlinear polarization $P(z)=h(|z|)z$ is asymptotically linear
  for $|z|$ large.

  Under the stronger condition \ref{ass:A4}, which does not allow
  for saturation, one has the lower bound
  \begin{align*}
  	V(a) \ge V(a_0) \left(\frac{a}{a_0}\right)^{p_0}
  \end{align*}
  for all $a\ge a_0$. In this case $h(a)\ge  V(a_0) a_0^{-p_0} a^{p_0-2}$
  for $a\ge a_0$, so $h$ grows at least with exponent $p_0-2$ when the nonlinearity does not saturate.

	\item If one prefers to have a local condition on the nonlinearity $h$,
	a suitable substitute for \eqref{eq:strong A-R} is
		\begin{equation}\label{eq:strong A-R-local}
		h'(a)a \ge (p_0-2) h(a) \text{  for all } a>0\,
	\end{equation}
	and
		\begin{equation}\label{eq:saturating A-R-local}
			h'(a)a \ge (p(a)-2) h(a)\, \text{ for all } a>0
		\end{equation}
	for \eqref{eq:saturating A-R}.
	In fact, \eqref{eq:strong A-R-local} is equivalent to
	$(h(a)a^2)' \ge p_0h(a)a$, and integrating this, one gets
	\eqref{eq:strong A-R}. Similarly \eqref{eq:saturating A-R-local}
	implies \eqref{eq:saturating A-R}, replacing $p(a)$ with
	$\inf_{0<s\le a} p(s)$.
  \end{theoremlist}
\end{remarks}

\medskip
To state the last theorem, we need one more notation: Given $r\ge 1$ we say that
$\psi\in L^{r+}$ if $\psi\in L^{r+\delta}$ for some $\delta>0$.
We also set $L^{\infty+} = L^\infty$.

\begin{theorem}\label{thm:Stability result}
  Suppose that the nonlinearity $h$ satisfies assumption \ref{ass:A6} and
  either of the following:
  \begin{theoremlist}
	\item \textbf{Zero average dispersion, non--saturated nonlinearity:}
		The nonlinearity $h$ satisfies assumption  \ref{ass: dav zero}
		for its derivative $h'$, except that the growth condition on $h$
		is slightly strengthened to $|h(a)|\lesssim a^{p_1}+ a^{p_2}$
		for some $0< p_1\le  p_2<4$.
		It also satisfies assumption \ref{ass:A4}, and  the density
		$\psi\in L^{\frac{4}{4-p_2}+}(\R)$ has compact support.
	\item \textbf{Zero average dispersion, saturated nonlinearity:}
		The nonlinearity $h$ satisfies assumption  \ref{ass: dav zero}
		for its derivative $h'$, except that the growth condition on $h$
		is strengthened to $|h(a)|\lesssim a^{p_1}+ a^{p_2}$
		for some $1\le  p_1\le  p_2< 3$.
		It also satisfies assumption \ref{ass:A5}, and  the density
		$\psi\in L^{\frac{4}{3-p_2}+}(\R)$ has compact support.
	\item  \textbf{Positive average dispersion, saturated and non--saturated nonlinearities:}
		The nonlinearity $h$ satisfies assumption
		\ref{ass: dav not zero}  and  the bound
		$|h(a)|\lesssim a^{p_1}+ a^{p_2}$ for some
		$0<p_1\le p_2<8$.
		The density  $\psi\in L^{\frac{4}{8-p_2}+}(\R)$
	    has compact support.
	    Moreover, $h$ satisfies either assumption
		\ref{ass:A4} or assumption \ref{ass:A5}.
  \end{theoremlist}
Then there exists a  critical threshold $0\le \lambda_{cr}^{\dav}<\infty$
such that
if $\lambda>\lambda_{cr}^{\dav}$ then $S_\lambda^{\dav}\neq \emptyset$.
Moreover, the set of ground states  $S_\lambda^{\dav}$
is orbitally stable in the sense that for every
$\veps >0$, there exists  $\delta>0$ such that if $u_0\in X$ with
\begin{equation*}
 \inf_{f\in S_\lambda^{\dav}}\|u_0-f\|_{X} < \delta,
\end{equation*}
then the solution $u$ with the initial datum $u_0$ satisfies
$$
\inf_{f\in S_\lambda^{\dav}}\|u(\cdot , t)-f\|_{X} < \veps
$$
for all $t\in \R$, where $X= H^1(\R)$ if $\dav>0$ and
$X= L^2(\R)$ if $\dav=0$.

In addition, if $\dav>0$ and
$0<\lambda<\lambda_{\text{cr}}^{\dav}$ then
$S_\lambda^{\dav}= \emptyset$.
\end{theorem}
\begin{remarks}\label{rem:defocussing case}
  \begin{theoremlist}
  	\item In the third assumption of the above theorem, the
  		condition $|h(a)|\lesssim a^{p_1} + a^{p_2}$ for
  		some $0<p_1\le p_2<8$ and all $a>0$ clearly
  		implies assumption \ref{ass: dav not zero global}.
  	\item For the existence of a critical threshold $\lambda_{cr}$ for which the set of ground states $S_\lambda^{\dav}$ is not empty when $\lambda>\lambda_{cr}$ slightly weaker assumptions suffice, see Theorem \ref{thm:existence}.
  	      For saturating nonlinearitioes we need to restrict
  	      the range of $p$ from $0<p<4$ to $1\le p<3$ in order to
  	      have $S_\lambda^{0}\neq\emptyset$.
  		 The additional assumptions are needed to ensure global
  		 existence of solutions.
  	\item 	
	If the average dispersion is negative, $\dav<0$, the
	nonlinearity in \eqref{eq:main} is \emph{defocusing}, at least when it is given by the Kerr approximation. For the local NLS it
	is known that there are no stationary solutions, i.e., solitons, in this case.
	For the dispersion managed NLS this is not known. While there are some
	numerical simulations, which show stable propagation of pulses for
	negative average dispersion $\dav<0$ with $|\dav|$ small, it seems
	that these pulses lose energy over time by radiation. Thus they are
	not expected to be true stationary solutions, see
	\cite[Remark 3.2]{ZGJT01}.
	However, this has not been shown rigorously.
  \end{theoremlist}
\end{remarks}

\subsection{The connection to nonlinear optics}
\label{sec:connection}
Equation \eqref{eq:main} is an averaged version of the local, but non--autonomous
dispersion managed NLS
 \begin{equation} \label{eq:NLS-gen}
      i \partial_t w = -d_{\text{loc}}(t) \partial_x^2 w - P(w),
 \end{equation}
where the dispersion $d_{\loc}(t)$ is parametrically modulated and
$P$ is the nonlinear interaction due to the polarizability of the glass--fiber cable. The constant $\dav$ is the average dispersion over one period along the cable and the function $\psi$ is the density of a probability measure related
to the mean--zero periodic part of the local dispersion profile,
\begin{equation}
	d_{\text{loc}}(t)=\dav +d_{\text{per}}(t)\, .
\end{equation}
 Local well--posedness of the non--averaged equation
\eqref{eq:NLS-gen} has been shown for power--law type
nonlinearities   in  \cite{ASS, CKL,CLA}, for example.
The question of global existence
versus finite time blowup of solutions of \eqref{eq:NLS-gen}
has been investigated in \cite{ASS}, see also \cite{MH2} for
related results.

In the case of strong dispersion management, one assumes that
the mean zero periodic part $d_{\text{per}}$  is give by
\begin{equation*}
	 d_{\text{per}}(t)= \veps^{-1} d_0(t/\veps)
\end{equation*}
with $d_0$ periodic, of period $L>0$ and zero mean, and
$\veps>0$ small. Since \eqref{eq:NLS-gen} is non--autonomous with a highly oscillating periodic local dispersion, Gabitov and Turitsyn  \cite{GT96a,GT96b} found an approximation which is good for small $\veps$, i.e., in the regime of strong dispersion management.
Roughly, the idea is as follows: Let $T_r= e^{ir \partial_x^2}$,
$D(t)=\int_0^t d_0(s)\, \mathrm{d}s$, and make the ansatz
\begin{equation}\label{eq:change variables}
	w(x,t) = T_{D(t/\veps)}v(\cdot,t)(x)\, .
\end{equation}
 Then \eqref{eq:NLS-gen} is equivalent to
 \begin{equation}\label{eq:NLS-gen-2}
 	i\partial_t v= -d_{\text{av}}\partial_x^2 v - T_{D(t/\veps)}^{-1}\big[P(T_{D(t/\veps)} v)\big]
 \end{equation}
 which now contains the fast oscillating term $T_{D(t/\veps)}$ in the nonlinearity, but the linear part is constant in $t$;  since $d_0$ has mean zero and period $L$, the cumulative dispersion $D(t/\veps)$ is periodic with period $\veps L$.
 The idea of Gabitov and Turitsyn, for the special case of a Kerr nonlinearity, is to average the fast oscillating nonlinear terms containing $T_{D(t/\veps)}$ over one period in $t$, which yields the dispersion managed NLS
 \begin{equation}
 	\begin{split}\label{eq:GT-1}
        i\partial_t u
        &= - d_{\text{av}} \partial_x^2  u
            - \frac{1}{\veps L}\int_{0}^{\veps L}
                T_{D(s/\veps)}^{-1}\big[P(T_{D(s/\veps)} u)\big]
              \, \mathrm{d}s \\
        &= - d_{\text{av}} \partial_x^2  u
            - \frac{1}{L}\int_{0}^{L}
                T_{D(s)}^{-1}\big[P(T_{D(s)} u)\big]
              \, \mathrm{d}s
 	\end{split}
 \end{equation}
 where $u$ now is the average profile of the pulse $v$. This is analogous to Kapitza's treatment of the unstable pendulum, which is stabilized by fast
oscillations of the pivot, see \cite{LandauLifshitz}.  Proofs of the averaging theorem were given in \cite{ZGJT01} and \cite{CKL} for Kerr type nonlinearity and \cite{CLA} for power--law type nonlinearities.
In the case of fast dispersion management, where the periodic mean--zero
part of the dispersion profile is given by
$d_{\text{per}}(t)=d_0(t/\veps)$, an averaging theorem is
proven in \cite{ASS}.

We prefer to rewrite \eqref{eq:GT-1} a bit: Introduce a probability measure $\mu$ on the Borel sets of $\R$ by
$\mu(B)\coloneqq \frac{1}{L}\int_0^L \id_B(D(s))\, \mathrm{d}s$ and make to change of variables $r=D(s)$ to see that
\eqref{eq:GT-1} is equivalent to
  \begin{equation*}
     i\partial_t u
         = - d_{\text{av}} \partial_x^2  u
            - \int_\R
                T_{r}^{-1}\big[P(T_{r} u)\big]
              \, \mu(dr)
 \end{equation*}
 which is equivalent to \eqref{eq:main} when $\mu$ has density $\psi$.

 Note that since the local mean zero periodic dispersion profile  $d_0$ is locally integrable, its integrated version $D$ is bounded, hence the probability measure $\mu$ has compact support. In particular, its density, once it exists, has compact support in all physically interesting cases. The existence and suitable $L^p$ properties
of the density $\psi$ follow from physically natural conditions on the local mean zero periodic dispersion profile $d_0$.

The model case, which is usually assumed, is a two step local dispersion profile
$d_{0}=d_{\text{model}}$ with
$d_{\text{model}}(t)=+1$ if $0\le t\le 1$ and
$d_{\text{model}}(t)=-1$ if $1<t<2$, extended periodically to
$t\in\R$. For such a model case the probability density $\psi$ is given by
\begin{equation*}
\psi_{\text{model}}=\id_{[0,1]}\, ,
\end{equation*}
the characteristic function of the interval $[0,1]$.
This simplifying assumption is often made but we will
not make it here.
We refer to \cite[Section 1.2]{HuLee2012} or
\cite[Section 1.2]{ChoiHuLee2016} for a detailed discussion
how the probability density $\psi$ is connected to the local periodic
dispersion profile, see \cite[Lemma 1.4]{HuLee2012}.
Most important for us is the criterion that, if $d_{0}$
stays away from zero and changes its sign
finitely many times  over one period, then
\begin{equation}
	\psi\in L^q \text{ for } q>1 \text{ whenever } \int_0^L |d_{\text{per}}(s)|^{1-q}\, \mathrm{d}s <\infty \, ,
\end{equation}
see \cite[Lemma 1.4]{HuLee2012}.
In particular, all the $L^q$--type conditions on $\psi$
are fulfilled for all physically reasonable local
dispersion profiles $d_{\text{per}}$.

\begin{remark}\label{rem:focusing-defocusing}
  The well--known local NLS in one space dimension is often written in the form
  \begin{align}\label{eq:normal-NLS}
  	i\partial_t u
  		= -\partial_x^2 u -\lambda P(u)
  \end{align}
  and a coupling constant
  $\lambda\in\R$.
In this case $\lambda>0$ is called a focusing and $\lambda<0$ is called a defocusing nonlinearity.
  Thus for the dispersion managed NLS \eqref{eq:main}, $\dav>0$ corresponds to the focusing, and $\dav<0$ to the defocusing, case of the usual local NLS, at least when $h$ is nonnegative, where the nonlinearity is given by $P(u)= h(|u|)u$.
\end{remark}

\bigskip
This paper is organized as follows. In Section \ref{sec:Nonlinear estimates} we gather the necessary nonlinear bounds. Due to the
nonlocality of the nonlinearity, these are quite different from what is usually
used in the study of NLS. Local existence is done in Section
\ref{sec: local existence  for DMNLS}. Since our assumptions on the
nonlinearity are rather weak, the existence proof does not immediately yield
continuous dependence on the initial data, at least when $\dav\neq 0$.
This local well--posedness is done in Section \ref{sec: local well--posedness}.
Global existence  is  based on mass and
energy conservation.
Due to our rather weak differentiability assumptions on the nonlinearity,
the usual approach to prove conservation of energy and mass is not
applicable in our case, see the discussion in the beginning of Section
\ref{sec: energy conservation}. We use a twisting trick to avoid the
usual approximation arguments. Our argument directly proves
differentiability of the mass and energy
and allows for low regularity solutions.
The proof of global existence is finished in Section \ref{sec: global existence}
and in Section \ref{sec:Stability result} we give the proof of orbital
stability of the set of ground states for non--negative average dispersion.

\section{Nonlinear estimates} \label{sec:Nonlinear estimates}

Before we collect the estimates we need, let us introduce some notations.
$L^p(\R)$ for $1\leq p \leq \infty$ and $H^s(\R)$, $s\in\R $, are the usual Lebesgue and Sobolev spaces with norms
$\| \cdot \| _{L^p}$ and $\| \cdot \| _{H^s}$, respectively.
That is,
$L^p(\R)$ is the space of (equivalence classes of) functions $f$ for which
\begin{equation}
	\|f\|_{L^p}= \left( \int_{\R } |f(x)|^p\, dx  \right)^{1/p} <\infty\, .  \notag
\end{equation}
For $f\in L^2(\R)$, we will simply write $\|f\|_{L^2}=\|f\|$. The Sobolev space is given by \
\begin{equation}
	H^s(\R)= \left\{ f\in\calS^*:\, \int_\R \la \eta\ra^{2s}|\hatt{f}(\eta)|^2\, d\eta <\infty  \right\}  \notag
\end{equation}
with the norm $\|f\|_{H^s}=\|\la \cdot\ra^s\hatt{f}\|$,
where $\calS^*=\calS^*(\R)$ denotes the tempered distributions on $\R $,
$\la \eta \ra\coloneqq (1+\eta^2)^{1/2}$, and
$\hatt{f}$ is the Fourier transform of $f$, defined by
\begin{equation}
	\hatt{f}(\eta)\coloneqq (2\pi)^{-1/2}\int_\R e^{-ix\eta}f(x)\, dx  \notag
\end{equation}
for $f\in \calS$, the Schwartz space of infinitely smooth, rapidly decreasing functions, and extended by duality to the space of tempered distributions $\calS^*$.

We denote by $L_t^q(J, L_x^p(I))$, for $1 \le p, q < \infty$ and intervals $I, J$, the space of all functions $u$ for which
$$
\|u\|_{L_t^q(J, L_x^p(I))}=\left(\int _J \left(\int _I |u(x,t)|^p dx\right)^{\f{q}{p}}dt\right)^{\f{1}{q}}
$$
is finite. If $p=\infty$ or $q=\infty$, use the essential
supremum instead. For notational simplicity, we write
$L^q( L^p)$ for $L_t^q(\R, L_x^p(\R))$.
For a Banach space $X$ with norm $\|\cdot\|_X$ and an interval
$J$, $\calC(J, X)$ is the space of all continuous functions
$u:J \to X$. When $J$ is compact, it is a Banach space with norm
\begin{equation*}
	\|u\|_{\calC(J, X)}=\sup _{t\in J}\|u(t)\|_{X}
\end{equation*}
and $\calC^1(J, X)$ is the Banach space of all continuously differentiable functions $u:J \to X$.

Now we gather some properties of the solution operator $T_r=e^{ir \partial_x^2}$ for the free Schr\"{o}dinger equation in spatial dimension one. It is a unitary operator on $L^2(\R)$ and, also, on $H^1(\R)$ and therefore for every $r\in \R$
$$
\|T_r f\|=\|f\| \quad \mbox{and} \quad \|T_r f\|_{H^1}=\|f\|_{H^1}.
$$

The following is the one-dimensional Strichartz estimate in the form that we need.

\begin{lemma}[One-dimensional Strichartz estimates]\label{lem:Strichartz}
\begin{theoremlist}
\item
Let $2\leq p \leq \infty$
so that
$$
\frac{1}{p}+\frac{2}{q}=\frac{1}{2}.
$$
If $f\in L^2(\R)$,  then the map $r \mapsto T_r f$ belongs to $L^q(L^p)\cap \calC(\R, L^2)$ and
\beq \label{est:strichartz ineq}
\|T_r f\|_{L^q(L^p)}\lesssim \|f\|,
\eeq
where the implicit constant depends only on $p$.
Moreover, if $f\in H^1(\R)$, then  the map $r \mapsto T_r f$ is in $\calC(\R, H^1)$.

\item Let $J$ be a bounded interval containing zero.  If $F\in L^1(J, L^2)$, then the map
$$
r \mapsto \Psi_F(r) :=\int_0 ^r T_{(r-r')} F(\cdot, r')dr'
$$
belongs to $L^\infty(J, L^2)\cap \calC(J, L^2)$
and
\bdm
\left\|\Psi_F \right\|_{L^\infty(J, L^2)}\lesssim \|F\|_{L^1(J, L^2)}.
\edm
Moreover, if $F\in L^1(J, H^1)$, then $\Psi_F$ is in  $\calC(J, H^1)$.
\end{theoremlist}
\end{lemma}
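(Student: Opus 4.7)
The plan is to reduce Part (i) to the standard dispersive and $TT^*$ arguments, and Part (ii) to Minkowski's inequality plus approximation.

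For Part (i), I would begin with the explicit kernel representation $T_rf(x)=(4\pi i r)^{-1/2}\int_\R e^{i|x-y|^2/(4r)}f(y)\,dy$ for $r\neq 0$, which gives the dispersive estimate $\|T_r f\|_{L^\infty}\lesssim |r|^{-1/2}\|f\|_{L^1}$. Interpolating with unitarity $\|T_r f\|_{L^2}=\|f\|_{L^2}$ yields $\|T_rf\|_{L^p}\lesssim |r|^{-(1/2-1/p)}\|f\|_{L^{p'}}$ for $2\le p\le\infty$. Then I would run the $TT^*$ argument: the adjoint of the map $T:L^2\to L^q_tL^p_x$, $f\mapsto T_rf$, sends $F\in L^{q'}_tL^{p'}_x$ to $\int_\R T_{-r}F(\cdot,r)\,dr$, so
\begin{equation*}
 TT^*F(r) = \int_\R T_{r-s}F(\cdot,s)\,ds.
\end{equation*}
The dispersive bound shows $\|TT^*F(r,\cdot)\|_{L^p}\lesssim\int_\R |r-s|^{-(1/2-1/p)}\|F(\cdot,s)\|_{L^{p'}}\,ds$, and the Hardy--Littlewood--Sobolev inequality bounds this in $L^q_t$ by $\|F\|_{L^{q'}_tL^{p'}_x}$ provided $2(1/2-1/p)=1-2/q$, i.e.\ $1/p+2/q=1/2$. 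Duality gives the asserted homogeneous Strichartz estimate \eqref{est:strichartz ineq}.

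For continuity, I would first note that $\{T_r\}_{r\in\R}$ is a strongly continuous unitary group on $L^2(\R)$ (by Stone's theorem, or by density of the Schwartz class on which the continuity is manifest via the Fourier multiplier $e^{-ir\eta^2}$ and dominated convergence, combined with the uniform bound $\|T_r\|=1$). Hence $r\mapsto T_r f\in\calC(\R,L^2)$ for $f\in L^2$. When $f\in H^1$, the Fourier multiplier $T_r$ commutes with $\partial_x$, so $\partial_x T_rf = T_r\partial_xf\in\calC(\R,L^2)$ as well, giving $r\mapsto T_rf\in\calC(\R,H^1)$.

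For Part (ii), Minkowski's inequality and unitarity of $T_{r-r'}$ on $L^2$ immediately give
\begin{equation*}
 \|\Psi_F(r)\|_{L^2} \le \int_0^r \|T_{r-r'}F(\cdot,r')\|_{L^2}\,dr' = \int_0^r\|F(\cdot,r')\|_{L^2}\,dr'\le \|F\|_{L^1(J,L^2)},
\end{equation*}
uniformly in $r\in J$. For continuity at $r_0\in J$, I would split
\begin{equation*}
 \Psi_F(r)-\Psi_F(r_0) = \int_{r_0}^{r} T_{r-r'}F(\cdot,r')\,dr' + \int_0^{r_0}\bigl(T_{r-r'}-T_{r_0-r'}\bigr)F(\cdot,r')\,dr';
\end{equation*}
the first term has $L^2$ norm at most $\int_{r_0}^r\|F(\cdot,r')\|_{L^2}\,dr'\to 0$ by absolute continuity of the integral, while the integrand of the second term tends to $0$ in $L^2$ pointwise in $r'$ by the strong continuity established above, and is dominated by $2\|F(\cdot,r')\|_{L^2}\in L^1(J)$, so dominated convergence applies. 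The $H^1$ statement follows by commuting $\partial_x$ through $T_{r-r'}$ and repeating the argument.

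The only subtle point is verifying the admissibility condition in the $TT^*$/Hardy--Littlewood--Sobolev step; beyond that, everything is bookkeeping using unitarity, density, and dominated convergence.
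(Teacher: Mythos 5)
Your argument is correct in substance, but note that the paper does not actually prove this lemma: immediately after the statement it refers to Strichartz \cite{Strichartz}, Ginibre--Velo \cite{GV}, and Keel--Tao \cite{KT}, and says the formulation used is taken from Cazenave \cite{Cazenave}. So there is no in-paper proof to compare against; what you have written is essentially the standard argument from those references --- the explicit kernel giving the $L^1\to L^\infty$ dispersive bound, interpolation with unitarity, the $TT^*$ reduction, and Hardy--Littlewood--Sobolev. It is worth observing that in one dimension this covers the whole admissible range: for $p>2$ the kernel exponent $1/2-1/p$ lies in $(0,1/2]$, so HLS applies directly and no Keel--Tao endpoint analysis is needed, while $(p,q)=(2,\infty)$ is just unitarity. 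Your Part (ii) via Minkowski's inequality, unitarity, and dominated convergence is likewise the standard proof and is fine, including the $H^1$ upgrade by commuting $\partial_x$ through the group.

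One slip to correct: the condition you write for the HLS step, $2(1/2-1/p)=1-2/q$, simplifies to $p=q$, which is not the admissibility relation. The convolution kernel produced by the dispersive estimate is $|r-s|^{-(1/2-1/p)}$, and a kernel $|t|^{-\lambda}$ maps $L^{q'}(\R)$ to $L^{q}(\R)$ precisely when $\lambda=2/q$; the correct condition is therefore $1/2-1/p=2/q$, which is the relation $1/p+2/q=1/2$ you state as the conclusion. A second, minor point: the duality step $(L^q_tL^p_x)^*=L^{q'}_tL^{p'}_x$ deserves a word of care when $p=\infty$ or $q=\infty$, though this is routine.
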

The Strichartz inequalities have a long history. The first proof by Strichartz \cite{Strichartz}, valid in all dimensions, was for the special case $p=q$. It was then later extended by several authors, see, for example, \cite{GV,KT}.
The above formulation is from \cite{Cazenave} for the case of one space dimension.

To state the space time bounds we need, which are based on
Strichartz type estimates,
we introduce one more notation. For a non--negative function
$\psi$ on $\R$ we denote by $L^q(\R^2, dx \psi dr)$, $1\leq q < \infty$, the Banach space of all functions with the weighted norm
\begin{equation*}
  \|u\|_{L^q(\R^2, dx \psi dr)}= \left(\iint _{\R^2} |u(x,r)|^q dx \psi(r) dr\right)^{1/q}.
 \end{equation*}

\begin{lemma}\label{lem:L^2boundedness}
Let $2\le q\le 6$ and $\psi\in  L^{\f{4}{6-q}}(\R)$. Then for all $f\in L^2(\R)$,
\beq\label{eq:L^2 boundedness}
	\|T_r f\|^q_{L^q(\R^2,dx\psi dr)}
	\lesssim
		\|f\|^q,
\eeq
where the implicit constant depends only on the $L^{\f{4}{6-q}}$ norm of $\psi$.
\end{lemma}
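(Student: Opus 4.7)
The plan is to apply H\"older's inequality in the $r$ variable to split $\psi(r)$ from $\|T_r f\|_{L^q_x}^q$, and then invoke the one-dimensional Strichartz estimate from Lemma \ref{lem:Strichartz}. First I would use Fubini to rewrite
\begin{equation*}
\|T_r f\|^q_{L^q(\R^2, dx\psi dr)} = \int_\R \psi(r)\, \|T_r f\|_{L^q_x}^q\, dr.
\end{equation*}

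For the generic range $2<q<6$, I would apply H\"older in $r$ with conjugate exponents $a'=\tfrac{4}{6-q}$ and $a=\tfrac{4}{q-2}$, which yields
\begin{equation*}
\int_\R \psi(r)\, \|T_r f\|_{L^q_x}^q\, dr
\le \|\psi\|_{L^{4/(6-q)}}\, \|T_r f\|_{L^{\tilde q}_r L^q_x}^q,
\qquad \tilde q := \frac{4q}{q-2}.
\end{equation*}
The pair $(p,\tilde q)=(q,\,4q/(q-2))$ is a Strichartz admissible pair, since
\begin{equation*}
\frac{1}{p}+\frac{2}{\tilde q} = \frac{1}{q} + \frac{q-2}{2q} = \frac{1}{2},
\end{equation*}
and both exponents lie in $[2,\infty]$ for $q\in(2,6)$. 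Lemma \ref{lem:Strichartz}(i) then gives $\|T_r f\|_{L^{\tilde q}_r L^q_x}\lesssim \|f\|$, and the claim follows.

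The endpoints I would handle directly. At $q=2$ the unitarity of $T_r$ on $L^2$ makes $\|T_r f\|_{L^2_x}^2=\|f\|^2$ constant in $r$, so the weighted integral is simply $\|\psi\|_{L^1}\|f\|^2$, matching the exponent $\tfrac{4}{6-q}=1$. At $q=6$ the H\"older exponent $a'$ degenerates to $\infty$, and pulling out $\|\psi\|_{L^\infty}$ leaves $\|T_r f\|_{L^6_r L^6_x}^6$, which is exactly the Strichartz estimate on the diagonal admissible pair $(6,6)$.

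I do not anticipate a serious obstacle: the entire argument amounts to a single H\"older split followed by Strichartz. The only thing requiring care is the exponent arithmetic, namely checking that the conjugate of $\tfrac{4}{6-q}$ is precisely the exponent turning $(q,aq)$ into an admissible Strichartz pair for the one-dimensional free Schr\"odinger evolution; this matches the hypothesis $\psi\in L^{4/(6-q)}$ in the statement.
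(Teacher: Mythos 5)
Your proof is correct and follows exactly the paper's argument: a single H\"older split in the $r$--integral with exponents $\tfrac{4}{q-2}$ and $\tfrac{4}{6-q}$, followed by the Strichartz estimate for the admissible pair $\bigl(q,\tfrac{4q}{q-2}\bigr)$. Your explicit treatment of the endpoints $q=2$ and $q=6$ is a minor (and welcome) addition that the paper leaves implicit.
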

\begin{proof}
The bound \eqref{eq:L^2 boundedness} is exactly the same as
provided by Lemma 2.1 in \cite{ChoiHuLee2016}.  We give a
simpler proof.
Using H\"older's inequality with exponents $\frac{4}{q-2}$ and $\frac{4}{6-q}$ in the $r$-integral and then Strichartz inequality from Lemma \ref{lem:Strichartz} one obtains
\beq
	\iint_{\R^2} |T_rf|^{q}\, dx \psi(r)dr
	\le \left\| T_rf \right\|_{L^{4q/(q-2)}(L^{q})}^{q} \|\psi\|_{L^{4/(6-q)}}
	\lesssim
		\|f\|^{q}  \|\psi\|_{L^{4/(6-q)}}. \notag
\eeq
\end{proof}

Similar to Proposition 2.15 in \cite{ChoiHuLee2016}, one can
extend the bound \eqref{eq:L^2 boundedness} to $q>6$ for $f \in H^1(\R)$.
In the following we use $a_+=\max(a,0)$ for the positive part of
$a\in\R$.
\begin{lemma}
\label{lem:H^1-boundedness}
	Let $2\leq q < \infty $ and $\psi \ge 0$ in $L^{\f{4}{6-q+\kappa}}(\R)$ for some
	$(q-6)_+\le \kappa\le q-2$.  Then for all $f\in H^1(\R)$
	\begin{align}\label{eq:H^1-boundedness}
		\|T_r f\|_{L^q(\R^2,dx\psi dr)}^q
        \lesssim \| f'\|^{\f{\kappa}{2}}\|f\|^{q-\f{\kappa}{2}},  \notag
	\end{align}
	where the implicit constant depends only the $L^{\f{4}{6-q+\kappa}}$ norm of $\psi$. 	
\end{lemma}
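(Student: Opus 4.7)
The plan is to split one copy of $|T_r f|^q$ into a high power that we dominate by an $L^\infty_x$ bound and a remaining power to which the previously established $L^2$--based estimate, Lemma \ref{lem:L^2boundedness}, applies. The parameter $\kappa$ will be precisely the total exponent removed in the $L^\infty_x$ step.

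First, since $T_r$ is unitary on $L^2$ and commutes with $\partial_x$, we have $\|T_r f\|=\|f\|$ and $\|(T_r f)'\| = \|T_r f'\|=\|f'\|$ for every $r\in\R$. The one--dimensional Gagliardo--Nirenberg/Sobolev inequality then gives, uniformly in $r$,
\begin{equation*}
	\|T_r f\|_{L^\infty_x}^2 \lesssim \|T_r f\| \|(T_r f)'\| = \|f\|\, \|f'\|.
\end{equation*}

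Next, with $\kappa$ in the stated range $(q-6)_+ \le \kappa \le q-2$, write $|T_r f|^q = |T_r f|^{\kappa}\,|T_r f|^{q-\kappa}$ and pull the first factor out in $L^\infty_x$:
\begin{equation*}
    \iint_{\R^2} |T_r f|^q \, dx\,\psi(r)\,dr
    \le \left(\sup_{r\in\R}\|T_r f\|_{L^\infty_x}\right)^{\kappa}
    \iint_{\R^2} |T_r f|^{q-\kappa}\, dx\,\psi(r)\,dr.
\end{equation*}
The exponent $\tilde q := q-\kappa$ satisfies $2\le \tilde q\le 6$ by the assumption on $\kappa$, and the weight $\psi$ lies in $L^{4/(6-\tilde q)}=L^{4/(6-q+\kappa)}$, which is exactly the hypothesis of Lemma \ref{lem:L^2boundedness} applied with exponent $\tilde q$. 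That lemma yields
\begin{equation*}
    \iint_{\R^2} |T_r f|^{q-\kappa}\, dx\,\psi(r)\,dr \lesssim \|f\|^{q-\kappa},
\end{equation*}
with an implicit constant depending only on $\|\psi\|_{L^{4/(6-q+\kappa)}}$.

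Combining the two displays with the Gagliardo--Nirenberg bound gives
\begin{equation*}
    \iint_{\R^2} |T_r f|^q \, dx\,\psi(r)\,dr
    \lesssim \big(\|f\|\|f'\|\big)^{\kappa/2}\,\|f\|^{q-\kappa}
    = \|f'\|^{\kappa/2}\,\|f\|^{q-\kappa/2},
\end{equation*}
which is the stated estimate. There is no serious obstacle here beyond choosing the splitting parameter correctly; the non--obvious point is merely matching the exponent on $\psi$ in Lemma \ref{lem:L^2boundedness} with the one appearing in the hypothesis, which forces the choice $\tilde q = q-\kappa$ and then determines $\kappa$ as the power absorbed by the $L^\infty_x$ Sobolev bound. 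The endpoint cases $\kappa=(q-6)_+$ (where $\tilde q=6$ or $\tilde q=q$ with $q\le 6$) and $\kappa=q-2$ (where $\tilde q=2$ and the inner estimate is just the $L^2$ unitarity of $T_r$) are handled by the same argument.
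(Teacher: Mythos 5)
Your proposal is correct and is essentially identical to the paper's own proof: both split $|T_rf|^q=|T_rf|^{\kappa}|T_rf|^{q-\kappa}$, bound the first factor via $\sup_r\|T_rf\|_{L^\infty}^2\le\|f'\|\|f\|$ (using that $\partial_x$ commutes with the unitary $T_r$), and apply Lemma \ref{lem:L^2boundedness} with exponent $q-\kappa\in[2,6]$ to the remainder. No differences worth noting.
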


\begin{proof} This can be found in the proof of Proposition  2.15
in \cite{ChoiHuLee2016}. For the reader's convenience, we give
the short proof:
Since $2\le q-\kappa\le 6$ and $\psi \in L^{\f{4}{6-(q-\kappa)}}(\R)$, applying Lemma
  \ref{lem:L^2boundedness}, we get
  \beq\label{ineq:for H^1}
  \begin{aligned}
  	\iint_{\R^2} |T_rf|^q \, dx \psi(r) dr
  	&\le \sup_{r\in\R }\|T_rf\|_{L^\infty}^\kappa
  		\iint_{\R^2} |T_rf|^{q-\kappa} \, dx \psi(r) dr \\
  & \lesssim \sup_{r\in\R }\| T_rf\|_{L^\infty}^\kappa \|f\|^{q-\kappa}.
  \end{aligned}
  \eeq
Now using the well--known bound
\begin{equation} \label{eq:Kato}
\|g\|_{L^\infty}^2\leq \|g'\|\|g\|,  \notag
\end{equation}
which follows easily from
\[
|g(x)|^2 =2\re \int_{-\infty}^x \ol{g(t)}g'(t)\, \mathrm{d}t = - 2\re \int_x^\infty \ol{g(t)}g'(t)\, \mathrm{d}t
\]
for all $g \in H^1(\R)$ and $x\in\R$, we obtain
\begin{align}\label{ineq:L infinity bound}
  \sup_{r\in\R }\|T_rf\|_{L^\infty}^2
  \leq \sup_{r\in \R}  \|\partial_x(T_r f)\|\|T_r f\|
	=\|f'\|\|f\| ,
\end{align}
where we used the fact that $\partial _x$ and $T_r=e^{ir\partial_x^2}$ commute and $T_r$ is unitary on $L^2(\R)$.
Combining \eqref{ineq:for H^1} and \eqref{ineq:L infinity bound} completes the proof.
\end{proof}

\begin{remark}
It immediately follows from Lemma \ref{lem:H^1-boundedness} that
$$
\|T_r f\|_{L^q(\R^2,dx\psi dr)}\lesssim \|f\|_{H^1}
$$
under the conditions in Lemma \ref{lem:H^1-boundedness}.
A similar argument shows that if $2\leq  q \leq \infty$ and $f \in H^1(\R)$, then
\beq \label{est:H^1bound}
\| T_r f\|_{L^q} \leq \|f\|_{H^1} \notag
\eeq
for arbitrary $r\in \R$. Indeed, this bound clearly holds due to \eqref{ineq:L infinity bound} when $q=\infty$.
For $2\leq q< \infty$, we have
\bdm
\int_{\R} |T_r f| ^q dx\leq  \|T_r f\|_{L^\infty}^{q-2}\int_{\R} |T_r f| ^2 dx \leq \|f\|^{\f{q+2}{2}}\|f'\| ^{\f{q-2}{2}}\leq \|f\|_{H^1}^q.
\edm
\end{remark}

We denote the nonlocal nonlinearity in \eqref{eq:main} by
\beq\label{def:nonlinearity}
Q(f):=\int_\R T_r^{-1}(P(T_r f) )\psi(r)dr
\eeq
for $f$ in either $L^2(\R)$ or $H^1(\R)$.
Then the map $f \mapsto Q(f)$ is bounded and locally Lipschitz continuous
as in the following two lemmas.

\begin{lemma}\label{lem:L^2 bound}
Suppose that $h$ satisfies assumption \ref{ass: dav zero} and
$0\le \psi \in  {L^{1}(\R)}\cap L^{\f{4}{4-p}}(\R)$.
Then for all $f, g \in L^2(\R)$ we have
\beq\label{ineq:L2 boundness for Q_2}
\| Q(f)\| \lesssim  \|f\|+\|f\|^{p+1}
\eeq
and
\beq\label{ineq:L^2 boundness for Q_2}
\| Q(f)-Q(g)\| \lesssim  \left(1+ \|f\|^{p}+\|g\|^{p} \right)\|f-g\|,
\eeq
where the implicit constants depend only on $p$ and the $L^{1}$, $ L^{\f{4}{4-p}}$ norms of $\psi$.
\end{lemma}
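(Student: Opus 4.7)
My plan is to prove both estimates by a duality argument, reducing everything to $L^{p+2}$ space-time bounds controlled by Lemma \ref{lem:L^2boundedness} with exponent $q=p+2$, which is precisely what matches the hypothesis $\psi\in L^{4/(4-p)}$ and the constraint $0\le p\le 4$. A naive Minkowski/$L^2$ estimate on $\|P(T_rf)\|$ would force exponent $2p+2$ in the space-time bound, which only works for $p\le 2$; using duality puts $|T_rf|$ and a test function on equal footing and lets us take the smaller exponent $p+2$ via H\"older in $x$.

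First I would establish two pointwise bounds. From \ref{ass: dav zero} it is immediate that $|P(z)|=|h(|z|)||z|\lesssim |z|+|z|^{p+1}$. Next, because $|h'(a)|\lesssim a^{-1}+a^{p-1}$, the product $|h'(a)|a$ is bounded by $1+a^p$, and together with $|h(a)|\lesssim 1+a^p$ this will yield the Lipschitz bound
\[
  |P(z_1)-P(z_2)|\lesssim (1+|z_1|^p+|z_2|^p)\,|z_1-z_2|.
\]
To derive it I differentiate $P(z(t))$ along the segment $z(t)=(1-t)z_2+tz_1$; the real-derivatives of $P$ are controlled pointwise by $|h(|z|)|+|h'(|z|)||z|$ (the extra $|z|$ cancels the $|z|^{-1}$ factor from $\partial|z|$), which is integrable in $t$ even where $z(t)$ passes through $0$ thanks to the hypothesis $\lim_{a\to0}h'(a)a=0$. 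Estimating $|z(t)|^p\le |z_1|^p+|z_2|^p$ finishes this step.

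Next I turn to the duality argument for \eqref{ineq:L2 boundness for Q_2}. For arbitrary $\varphi\in L^2(\R)$, unitarity of $T_r$ yields
\[
  \langle Q(f),\varphi\rangle
   =\iint_{\R^2} \overline{T_r\varphi(x)}\,P(T_rf)(x)\,\psi(r)\,dr\,dx,
\]
so by the pointwise bound on $P$ I get two terms; the first is trivially bounded by $\|\varphi\|\|f\|\|\psi\|_{L^1}$ via Cauchy--Schwarz in $x$ and unitarity in $r$. For the $|T_rf|^{p+1}$ term I apply H\"older in $x$ with exponents $p+2$ and $(p+2)/(p+1)$ to get $\|T_r\varphi\|_{L^{p+2}}\|T_rf\|_{L^{p+2}}^{p+1}$, and then H\"older in $r$ with exponents $p+2$ and $(p+2)/(p+1)$, weighting both factors with $\psi$. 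Each factor is then controlled by Lemma \ref{lem:L^2boundedness} with $q=p+2$, requiring $\psi\in L^{4/(4-p)}$, producing $\lesssim \|\varphi\|\|f\|^{p+1}$. Taking $\sup$ over $\|\varphi\|=1$ gives \eqref{ineq:L2 boundness for Q_2}.

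Finally, for \eqref{ineq:L^2 boundness for Q_2}, the same duality scheme applied to $Q(f)-Q(g)$ together with the pointwise Lipschitz bound produces three terms of the form $\iint |T_r\varphi|\,|T_rf-T_rg|\,\psi\,dr\,dx$ (from the "$1$") and $\iint|T_r\varphi|\,|T_rf|^p\,|T_r(f-g)|\,\psi\,dr\,dx$ (and the analogue with $g$). The first is handled by Cauchy--Schwarz in $x$, the others by the triple H\"older in $x$ with exponents $p+2,\,(p+2)/p,\,p+2$ (which sum to $1$), followed by the analogous triple H\"older in $r$; invoking Lemma \ref{lem:L^2boundedness} with $q=p+2$ three times then produces $\|\varphi\|\|f\|^p\|f-g\|$ and $\|\varphi\|\|g\|^p\|f-g\|$. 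The main technical care is choosing exponents that land on $p+2$ in \emph{both} the $x$-norms and the $r$-averages so that a single application of Lemma \ref{lem:L^2boundedness} (with the precise $\psi$-class given in the hypothesis) closes all estimates; this is the step where the whole plan either works or fails.
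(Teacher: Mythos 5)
Your proof is correct, but it takes a genuinely different route from the paper. The paper's argument is precisely the ``naive'' Minkowski one you dismiss: it pulls the $L^2_x$ norm inside the $r$-integral, writes $\||T_rf|^{p+1}\|_{L^2}=\|T_rf\|_{L^{2(p+1)}}^{p+1}$, applies H\"older in $r$ with exponents $\frac{4}{p}$ and $\frac{4}{4-p}$, and then invokes the \emph{mixed-norm} Strichartz estimate for the admissible pair $\bigl(2(p+1),\frac{4(p+1)}{p}\bigr)$ (and, for the Lipschitz bound, a triple H\"older with an auxiliary exponent $\alpha$ and two further admissible pairs). So your motivating claim that the Minkowski approach ``only works for $p\le 2$'' is mistaken: the restriction $q\le 6$ belongs to the diagonal Lemma \ref{lem:L^2boundedness}, not to Lemma \ref{lem:Strichartz}, and the paper sidesteps it by never putting the $L^{2(p+1)}_x$ norm into diagonal form. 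Your duality scheme instead symmetrizes the estimate so that every factor lands in $L^{p+2}(\R^2,dx\,\psi\,dr)$ with $p+2\le 6$, and a single application of Lemma \ref{lem:L^2boundedness} (whose hypothesis $\psi\in L^{4/(6-q)}=L^{4/(4-p)}$ matches exactly) closes both bounds; the exponent bookkeeping in your H\"older steps ($\frac{1}{p+2}+\frac{p+1}{p+2}=1$ and $\frac{1}{p+2}+\frac{p}{p+2}+\frac{1}{p+2}=1$) checks out, the degenerate case $p=0$ collapses to the $L^1$ term, and your pointwise Lipschitz bound on $P$ is derived exactly as in the paper. What your approach buys is economy --- one diagonal space-time lemma and no free parameter $\alpha$ --- at the cost of routing through a duality pairing, for which one should note (as your estimates in fact show) that the double integral converges absolutely so that Fubini and the identification of $Q(f)$ as an $L^2$ element via Riesz representation are justified.
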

\begin{proof}
Since $P(z)= h(|z|)z$, the triangle inequality for integrals implies
\begin{align*}
\|Q(f)\|&
\leq \int_\R \|T_r^{-1}(P(T_rf))\|\psi(r) dr \\
 & \lesssim  \int_\R \left( \|T_rf\|+ \||T_rf|^{p+1}\|\right) \psi(r)dr \, ,
  \end{align*}
  where we used assumption \ref{ass: dav zero}.
  For the first term,
  we note that $\|T_rf\|= \|f\|$, since $T_r$ is unitary on
  $L^2(\R)$. For the second term, we use H\"{o}lder's inequality with
  exponents $\f{4}{p}$ and $\f{4}{4-p}$ in $r$ to get
  \begin{align*}
    \int_\R \||T_rf|^{p+1}\| \psi(r)dr &=\int_\R \|T_rf\|_{L^{2(p+1)}}^{p+1} \psi(r) dr \\
    &\leq \left(\int_\R \|T_rf\|_{L^{2(p+1)}}^{\f{4(p+1)}{p}}dr \right) ^{\f{p}{4}} \left(\int_\R | \psi(r)|^{\f{4}{4-p}} dr\right)^{\f{4-p}{4}} .
  \end{align*}
Thus \eqref{ineq:L2 boundness for Q_2} follows from the Strichartz estimate \eqref{est:strichartz ineq}.

For the second bound, we again use the triangle inequality and the unitarity of $T_r$ on $L^2(\R)$ to see that
\begin{align*}
\|Q(f)-Q(g)\| & \leq \int _{\R}\|P(T_r f)- P(T_r g)\| \psi(r)dr \, .
\end{align*}
Let $w,z\in\C$. From assumption \ref{ass: dav zero} one gets for any $0\le s\le 1$
\begin{align*}
	\Big|\f{d}{ds}P(w+&s(z-w))\Big| = \Big|\f{d}{ds}\big[h(|w+s(z-w)|)(w+s(z-w))\big]\Big| \\
		&\le \big|h'(|w+s(z-w)|)\big| |w+s(z-w)||z-w| + \big|h(|w+s(z-w)|)\big||z-w| \\
		&\lesssim \big(1+|w+s(z-w)|^p\big) |z-w| \le \big(1+\max(|w|,|z|)^p\big)|z-w|
\end{align*}
and the fundamental theorem of calculus gives for all $z,w\in \C$
\begin{equation}\label{ineq:fundamental theorem}
\begin{split}
		|P(z)-&P(w)| = \left| \int_0^1 \f{d}{ds}\Bigl(P(w+s(z-w))\Bigr)ds\right|
 		\lesssim \big(1+\max(|w|,|z|)^p\big) |z-w|\, .
 \end{split}
\end{equation}
Therefore
\begin{equation}
\begin{split}\label{ineq:local Lipschitz in L^2}
	&\|Q(f)-Q(g)\|  \lesssim  \int _{\R} \left\|\Bigl(1+ \max(|T_r f|,|T_r g|)^{p}\Bigr)|T_r (f-g)|\right\| \psi(r)dr \\
& \leq  \int _{\R} \|T_r(f-g)\|\psi(r)dr  + \int _{\R}\left\| \left( |T_r f|^{p}+|T_r g|^{p}\right) T_r(f-g)\right\| \psi(r) dr.
\end{split}	
\end{equation}
Note that the first term equals $\|f-g\|\|\psi\|_{L^1}$. If $p=0$, the second term is bounded in the same way. So to control the second term, it is enough to assume that $0<p\le 4$. 
Use  H\"{o}lder's inequality with $\alpha$ and $\f{2\alpha}{\alpha-2}$
in $x$ to get
\begin{align*}
\| |T_r f|^{p} T_r(f-g)\|
  & \leq
\|| T_r f|^{p}\|_{L^{\alpha}}  \|T_r (f-g)\|_{L^{\f{2\alpha}{\alpha-2}}}
  = \| T_r f\|^{p}_{L^{\alpha p}}  \|T_r (f-g)\|_{L^{\f{2\alpha}{\alpha-2}}}.
\end{align*}
Note that $\frac{2\alpha}{\alpha-2}>2$ for any $\alpha>2$ and one can
always choose $\alpha>2$ such that also $\alpha p \ge 2$. Fix such an
$\alpha >2$ and  use H\"{o}lder's inequality with three exponents
$\f{4\alpha}{\alpha p -2}$, $2\alpha$ and $\f{4}{4-p}$ in $r$
to obtain
\begin{align}\label{ineq:lipschitz Q}
  \int _{\R}\| |T_r f|^{p} &T_r(f-g)\| \psi(r)dr
   \leq \int _{\R} \| T_r f\|^{p}_{L^{\alpha p}}  \|T_r (f-g)\|_{L^{\f{2\alpha}{\alpha-2}}} \psi(r) dr  \notag \\  \notag
 \leq&
\left(\int _{\R} \| T_r f\|^{\f{4\alpha p}{\alpha p-2}}_{L^{\alpha p}} dr \right)^{\f{\alpha p-2}{4\alpha}}
  \left(\int _{\R} \| T_r (f-g)\|^{2\alpha}_{L^{\f{2\alpha}{\alpha-2}}}dr\right )^{\f{1}{2\alpha}}
 \left(\int _{\R} |\psi(r)|^{\f{4}{4-p}}dr\right)^{\f{4-p}{4}} \\ \notag
 \lesssim & \|f\|^{p}\|f-g\| \|\psi\|_{L^{\f{4}{4-p}}},
\end{align}
where we used the Strichartz estimate for the first two factors.
Using this in \eqref{ineq:local Lipschitz in L^2} proves the second part of the lemma.
\end{proof}

\begin{lemma} \label{lem:nonlinear H^1 bound}
Suppose that $h$ satisfies assumption \ref{ass: dav not zero} and $\psi \geq 0$ in $L^1(\R)$. Then for all $f, g \in H^1(\R)$ we have
\beq \label{ineq:boundedness in H^1}
\| Q(f)\|_{H^1} \lesssim \Bigl[J_1 (\|f\|_{H^1})+ J_2 (\|f\|_{H^1})(1+ \|f\|_{H^1})\Bigr]\|f\|_{H^1}  \notag
\eeq
and with $a\vee b = \max(a,b)$ for real numbers $a$ and $b$
\beq\label{ineq:nonlinear L^2_H^1 estimate}
\begin{aligned}
&\| Q(f)-Q(g)\|\\
&\lesssim \Bigl[J_1 (\|f\|_{H^1}\vee \|g\|_{H^1}) +J_2 (\|f\|_{H^1}\vee \|g\|_{H^1}) (1+ \|f\|_{H^1}\vee \|g\|_{H^1})\Bigr]\|f-g\|,
\end{aligned}
\eeq
where the implicit constants depend only on the $L^1$ norm of $\psi$.

\end{lemma}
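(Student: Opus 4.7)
The overall strategy is, by the unitarity of $T_r$ on both $L^2(\R)$ and $H^1(\R)$, to reduce both claims to pointwise-in-$r$ bounds on $P(T_r f)$ and on the difference $P(T_rf)-P(T_r g)$, and then to integrate against $\psi$ using $\psi\in L^1$. Throughout, the bound $\|T_r h\|_{L^\infty}^2\le \|T_r h\|\,\|\partial_x T_r h\| = \|h\|\,\|h'\|\le \|h\|_{H^1}^2$, used in Lemma~\ref{lem:H^1-boundedness}, is the key Sobolev embedding.

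For the $H^1$ bound, fix $r\in\R$, set $g=T_rf$, and estimate $\|P(g)\|_{H^1}$. The $L^2$--part is immediate: $\|h(|g|)g\|\le \|h(|g|)\|_{L^\infty}\|g\|\le J_1(\|g\|_{L^\infty})\|f\|\le J_1(\|f\|_{H^1})\|f\|_{H^1}$, since $J_1$ is increasing. For the derivative, a direct computation (treating $\C\cong\R^2$) gives
\begin{equation*}
  \partial_x P(g) = h(|g|)\,\partial_x g \,+\, h'(|g|)\,\tfrac{\re(\overline{g}\,\partial_x g)}{|g|}\,g
\end{equation*}
wherever $g\ne 0$, and assumption \ref{ass: dav not zero} together with $\lim_{a\to 0}h'(a)a=0$ shows that $P$ is in fact $C^1$ on $\C$, so this formula extends to all of $\R$ in the distributional sense. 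Taking absolute values pointwise,
\begin{equation*}
  |\partial_x P(g)|\le \bigl[\,|h(|g|)| + |h'(|g|)|\,|g|\,\bigr]\,|\partial_x g|\le \bigl[\,J_1(|g|)+J_2(|g|)(|g|+1)\,\bigr]\,|\partial_x g|.
\end{equation*}
Bounding $|g|$ by $\|g\|_{L^\infty}\le \|f\|_{H^1}$ in the coefficient and using monotonicity of $J_1,J_2$, then taking the $L^2$ norm and using $\|\partial_x g\| = \|f'\|$, yields $\|\partial_x P(g)\|\lesssim [J_1(\|f\|_{H^1})+J_2(\|f\|_{H^1})(1+\|f\|_{H^1})]\|f\|_{H^1}$. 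Combining with the $L^2$ estimate and integrating against $\psi$ gives the first claim.

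For the Lipschitz bound, fix $r$ and set $w=T_rf$, $z=T_rg$. Since $P$ is $C^1$ on $\C$, the fundamental theorem of calculus yields
\begin{equation*}
  P(w)-P(z)=\int_0^1\!\frac{d}{ds}P(u_s)\,ds,\qquad u_s=z+s(w-z),
\end{equation*}
exactly as in \eqref{ineq:fundamental theorem}. Computing $\frac{d}{ds}P(u_s)$ and estimating as above gives the pointwise bound
\begin{equation*}
  \Bigl|\tfrac{d}{ds}P(u_s)\Bigr|\le \bigl[\,|h(|u_s|)|+|h'(|u_s|)|\,|u_s|\,\bigr]\,|w-z|\le \bigl[\,J_1(|u_s|)+J_2(|u_s|)(|u_s|+1)\,\bigr]\,|w-z|.
\end{equation*}
Pointwise in $x$, $|u_s(x)|\le \max(|w(x)|,|z(x)|)\le \|w\|_{L^\infty}\vee \|z\|_{L^\infty}\le \|f\|_{H^1}\vee\|g\|_{H^1}$, so by monotonicity of $J_1,J_2$ the bracket is controlled by $J_1(M)+J_2(M)(1+M)$ with $M=\|f\|_{H^1}\vee\|g\|_{H^1}$. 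Taking the $L^2$ norm, using unitarity $\|T_r(f-g)\|=\|f-g\|$, and integrating over $s\in[0,1]$ then over $r$ against $\psi\in L^1$ gives the second claim.

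The main subtle point is the differentiation of $P(z)=h(|z|)z$ where $z$ may vanish or where the line segment $u_s$ may cross zero; here the assumption $\lim_{a\to 0}h'(a)a=0$ is essential, as it promotes $P$ to a genuine $C^1$ map on $\C$ and legitimizes the fundamental theorem of calculus computation. Apart from this, everything is routine.
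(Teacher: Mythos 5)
Your proposal is correct and follows essentially the same route as the paper: reduce to pointwise-in-$r$ estimates via unitarity of $T_r$ and $\|T_rf\|_{L^\infty}\le\|f\|_{H^1}$, compute $\partial_x P(g)=h(|g|)g'+h'(|g|)\re(\overline{g}g'/|g|)g$ for the $H^1$ bound, and use the fundamental theorem of calculus along the segment $w+s(z-w)$ for the Lipschitz bound, then integrate against $\psi\in L^1$. Your explicit remark that $\lim_{a\to0}h'(a)a=0$ makes $P$ a $C^1$ map on $\C$ (so the segment may cross zero) is a point the paper leaves implicit, but otherwise the arguments coincide.
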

\begin{proof}
Let $f\in H^1(\R)$. We first show
\beq \label{ineq:boundedness}
\| Q(f)\| \leq J_1 (\|f\|_{H^1})\|f\|\|\psi\|_{L^1}.  \notag
\eeq
Use the triangle inequality, the unitarity of $T_r$ on $L^2(\R)$, and assumption \ref{ass: dav not zero} to get
\beq \label{ineq:Minkowski-H^1}
\begin{aligned}
\|Q(f)\|
&\leq \int_\R \left\|P(T_rf)\right\| |\psi(r)| dr \leq \int_\R \left\|J_1(|T_rf|)\right\|_{L^\infty}\|T_r f\| \psi(r) dr\\
&\leq \int_\R J_1(\|T_rf\|_{L^\infty})\|T_r f\| \psi(r) dr
	 \leq J_1(\|f\|_{H^1}) \|f\| \|\psi\|_{L^1},
\end{aligned}
\eeq
where we also used the assumption that $J_1$ is increasing and $\|T_r f\|_{L^\infty} \leq \|T_r f\|_{H^1}=  \|f\|_{H^1}$.

For any $g\in H^1(\R)$
\begin{align*}
	\big|\partial_x P(g)\big|  &= \big|\partial_x h(|g|)g \big|
	= \big|h(|g|)g'+ h'(|g|)\re \left(\f{\overline{g}}{|g|}g'\right) g \big| \\
		&\leq |h(|g|)g'|+ |h'(|g|)||g'| |g|
		\leq \bigl[J_1(|g|)+ J_2(|g|)(1+ |g|)\bigr]|g'|,
\end{align*}
where we used assumption \ref{ass: dav not zero}.
Since $J_1$ and $J_2$ are increasing, we get
\begin{align*}
	\left\|\partial_x\bigl(P(T_rf)\bigr)\right\|
	&\leq  \| J_1(|T_r f|)+ J_2(|T_r f|)(1+ |T_r f|)\| _{L^\infty} \|\partial _x T_rf\|\\
	& \leq \Bigl[J_1(\| f\| _{H^1}) + J_2(\| f\| _{H^1}) (1+ \| f\| _{H^1})\Bigr] \| f'\|.
\end{align*}
From this we obtain
\begin{align*}
\big\|\partial_x Q(f)\big\|
&\leq \int_\R \left\|\partial_x P(T_rf)\right\| \psi(r)dr\\
&\leq \Bigl[J_1(\| f\| _{H^1}) + J_2(\| f\| _{H^1})(1+ \| f\| _{H^1})\Bigr]  \|f'\|\|\psi\|_{L^1}
\end{align*}
which together with \eqref{ineq:Minkowski-H^1} proves the first bound of the lemma.

Next, we prove the second bound. Arguing similarly as in
the derivation of \eqref{ineq:fundamental theorem}, we have for
$z, w \in \C$
\begin{align*}
\big|P(z)-P(w)\big|&=\big|h(|z|) z -h(|w|) w\big|\\
&\leq |z-w| \int _0 ^1\Bigl[ |h'(|w+s(z-w)|)|  |w+s(z-w)| +|h( |w+s(z-w)|) | \Bigr]ds\\ \notag
& \leq |z-w| \Bigl[J_1 (|z|\vee |w|)+J_2 (|z|\vee |w|)(1+ |z|\vee |w|)\Bigr],
\end{align*}
where we used assumption \ref{ass: dav not zero} for $h$ in the last bound.
This implies
 \begin{align*}
 & \|Q(f)-Q(g)\|
 	\leq \int_\R \|P(T_r f)- P(T_r g)\| \psi(r)dr \\
 \leq  & \int _{\R} \|J_1(|T_r f|\vee |T_r g|)+J_2(|T_r f|\vee |T_r g|) (1+ |T_r f|\vee |T_r g|)\|_{L^\infty} \|T_r(f-g)\|\psi(r) dr.
 \end{align*}
This proves \eqref{ineq:nonlinear L^2_H^1 estimate}, since
$J_1$ and $J_2$ are increasing,
$\|T_r f\|_{L^\infty} \leq \|f\|_{H^1}$, and $T_r$ is unitary.
\end{proof}

\section{Local existence} \label{sec: local existence  for DMNLS}

In this section, we prove the existence of local strong solutions of \eqref{eq:main}, equivalently, local solutions of \eqref{eq: Duhamel u}.
This can be proven with by now  standard arguments (see, for example, \cite{Cazenave,Kato1987}). However, since, in particular in the $H^1$
setting, we want to impose rather weak differentiability conditions on the
nonlinearity, the proofs are somewhat technical and we prefer to give the proofs in detail for the reader's convenience.

Here and below, we use $C$ to denote various constants.
First, we show the existence of local solutions of \eqref{eq: Duhamel u} in the case of vanishing average dispersion.

\begin{proposition}\label{prop:local existence in L^2}
Let $\dav= 0$. Suppose that $h$ satisfies  assumption \ref{ass: dav zero}
and  $\psi\in L^{1} (\R)\cap L^{\f{4}{4-p}} (\R)$.
Then there exists a unique local solution of \eqref{eq: Duhamel u}.
More precisely, for any $K>0$ there exist positive numbers
$M_\pm$, depending also on $p$ and the $L^{1}$, $L^{\f{4}{4-p}} $
norms of $\psi$, such that for any initial condition $u_0\in L^2(\R)$
with $\|u_0\|\le K$ there exists a unique solution
$u\in \calC([-M_-, M_+], L^2)$ of \eqref{eq: Duhamel u}.  Moreover,
\begin{equation} \label{eq:local L^2 bound}
	\|u(t)\|\le 2K \quad \text{for all } t\in [-M_-,M_+]\, .
\end{equation}
\end{proposition}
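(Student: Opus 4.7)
Since $\dav=0$, the Duhamel formula \eqref{eq: Duhamel u} reduces to the fixed point equation
\begin{equation*}
	\Phi(u)(t) \coloneqq u_0 + i\int_0^t Q(u(t'))\, dt',
\end{equation*}
so the plan is a standard Banach fixed point argument on a suitable closed ball, the only technical input being the $L^2$ bounds on $Q$ from Lemma \ref{lem:L^2 bound}. First, fix $K>0$ and for $M_\pm>0$ to be chosen, consider the complete metric space
\begin{equation*}
	B_{K,M_\pm}\coloneqq \{ u\in \calC([-M_-,M_+],L^2(\R)) : \sup_{t}\|u(t)\|\le 2K \}
\end{equation*}
with the $\calC([-M_-,M_+],L^2)$ metric, and take $u_0 \in L^2(\R)$ with $\|u_0\|\le K$.

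Next, I would check that $\Phi$ maps $B_{K,M_\pm}$ into itself for $M_\pm$ small enough. Since Lemma \ref{lem:L^2 bound} gives $\|Q(f)-Q(g)\|\lesssim (1+\|f\|^p+\|g\|^p)\|f-g\|$, the map $Q\colon L^2\to L^2$ is continuous, so $t'\mapsto Q(u(t'))$ is continuous on $[-M_-,M_+]$ with values in $L^2$, hence Bochner integrable; in particular $\Phi(u)\in\calC([-M_-,M_+],L^2)$. Using \eqref{ineq:L2 boundness for Q_2} we obtain
\begin{equation*}
	\|\Phi(u)(t)\| \le \|u_0\| + C(M_+ + M_-)\bigl(2K + (2K)^{p+1}\bigr),
\end{equation*}
where $C$ depends only on $p$, $\|\psi\|_{L^1}$ and $\|\psi\|_{L^{4/(4-p)}}$; choosing $M_\pm$ so that $C(M_+ + M_-)(2K+(2K)^{p+1})\le K$ forces the right-hand side to be $\le 2K$.

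Then I would verify the contraction estimate using \eqref{ineq:L^2 boundness for Q_2}: for $u,v\in B_{K,M_\pm}$,
\begin{equation*}
	\|\Phi(u)(t)-\Phi(v)(t)\| \le C'(M_+ + M_-)\bigl(1+2(2K)^p\bigr)\sup_{s}\|u(s)-v(s)\|,
\end{equation*}
so by shrinking $M_\pm$ further, one makes the prefactor $<1/2$, giving a strict contraction. Banach's fixed point theorem then yields a unique $u\in B_{K,M_\pm}$ solving \eqref{eq: Duhamel u}, and \eqref{eq:local L^2 bound} is built into the ball. Observe that $M_\pm$ only depend on $K$ and on the stated norms of $\psi$, as required.

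Finally, uniqueness in all of $\calC([-M_-,M_+],L^2)$ (not merely inside $B_{K,M_\pm}$) follows by a short Gr\"onwall argument: if $u_1,u_2$ are two solutions with the same initial datum, both are bounded on $[-M_-,M_+]$ by some $R$, so \eqref{ineq:L^2 boundness for Q_2} gives
\begin{equation*}
	\|u_1(t)-u_2(t)\| \le C_R \int_0^{|t|}\|u_1(s)-u_2(s)\|\,ds,
\end{equation*}
forcing $u_1\equiv u_2$. I expect the only mild technical issue to be the measurability/integrability of $t\mapsto Q(u(t))$, which is dispatched by continuity of $Q\colon L^2\to L^2$ as above; everything else is driven purely by the two estimates of Lemma \ref{lem:L^2 bound}.
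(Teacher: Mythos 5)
Your proposal is correct and follows essentially the same route as the paper: a Banach fixed point argument on the ball of radius $2K$ in $\calC([-M_-,M_+],L^2)$, powered by the two bounds of Lemma \ref{lem:L^2 bound}, with $M_\pm$ chosen small depending only on $K$, $p$, and the $L^1$, $L^{\f{4}{4-p}}$ norms of $\psi$. Your closing Gr\"onwall argument upgrading uniqueness from the ball to all of $\calC([-M_-,M_+],L^2)$ is a small but worthwhile addition that the paper's proof leaves implicit.
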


An immediate consequence is
\begin{corollary} \label{cor:maximal existence in L^2}
Let $\dav= 0$. Suppose that $h$ satisfies  assumption \ref{ass: dav zero}
 and  $\psi\in L^{1} (\R)\cap L^{\f{4}{4-p}} (\R)$.
For any  initial datum $u_0\in L^2(\R)$ there exists maximal
life times $T_\pm\in (0,\infty]$ such that there is a unique solution
$u\in \calC((-T_-, T_+), L^2)$ of \eqref{eq: Duhamel u}.
 	Moreover, the blow--up alternative for solutions holds: If $T_+<\infty$
  	then
 	\begin{equation}
 		\lim_{t\to T_+} \|u(t)\| =\infty  \notag
 	\end{equation}
 	and similarly, if $T_-<\infty$, then
 	\begin{equation}
 		\lim_{t\to -T_-} \|u(t)\| =\infty .  \notag
 	\end{equation}
\end{corollary}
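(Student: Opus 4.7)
The corollary is a standard consequence of Proposition \ref{prop:local existence in L^2}, and the plan is to run the usual continuation argument. First, I would define
\[
T_+ \coloneqq \sup\bigl\{ T>0 : \exists\, u\in \calC([0,T],L^2)\text{ solving \eqref{eq: Duhamel u} with } u(0)=u_0\bigr\},
\]
and analogously $T_-\ge 0$ on the negative side. Proposition \ref{prop:local existence in L^2}, applied with $K=\|u_0\|$, yields $T_\pm>0$. Uniqueness within each local window (also from Proposition \ref{prop:local existence in L^2}, applied on any subinterval where the $L^2$ norm is bounded) lets me glue two candidate solutions defined on $[0,T_1]$ and $[0,T_2]$: they must agree on the overlap, because the set where they coincide is nonempty (contains $0$), closed by continuity in $L^2$, and open by local uniqueness applied at any common point. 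This produces a single well-defined $u\in \calC((-T_-,T_+),L^2)$ solving \eqref{eq: Duhamel u}, and by construction $(-T_-,T_+)$ is the maximal interval of existence.

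For the blowup alternative, suppose $T_+<\infty$ but $\liminf_{t\to T_+^-}\|u(t)\|<\infty$. Pick a sequence $t_n\to T_+^-$ with $\|u(t_n)\|\le K$ for some finite $K$. The key point is that the lifetimes $M_\pm$ in Proposition \ref{prop:local existence in L^2} depend on the initial data only through the bound $K$; therefore, applying that proposition with initial datum $u(t_n)$ at time $t_n$ yields a strong solution on $[t_n,\,t_n+M_+]$ for some $M_+=M_+(K)>0$ independent of $n$. Choosing $n$ large enough that $T_+-t_n<M_+$, I can concatenate this extension with the original solution on $[0,t_n]$ (the two agree at $t_n$ by uniqueness, and the Duhamel integral relation for the concatenation follows from a change of variables in the inner integral). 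This produces a solution beyond $T_+$, contradicting the definition of $T_+$. Hence $\liminf_{t\to T_+^-}\|u(t)\|=\infty$, i.e.\ $\lim_{t\to T_+^-}\|u(t)\|=\infty$; the argument at the left endpoint is identical.

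The main (and only) non-routine point is the uniform lower bound on the continuation time in terms of $\|u(t_n)\|$, which is precisely the content of the bound \eqref{eq:local L^2 bound} in Proposition \ref{prop:local existence in L^2}; without this uniformity, one could not guarantee that the local lifetimes do not shrink to zero as $t_n\to T_+$. Since Proposition \ref{prop:local existence in L^2} already provides this, the corollary follows with no further analytic input, and the bulk of the write-up is bookkeeping (maximality of $T_+$ via Zorn or a supremum, and the gluing by uniqueness).
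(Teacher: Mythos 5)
Your proposal is correct and follows essentially the same route as the paper: define $T_\pm$ as a supremum, and for the blowup alternative use the fact that the local existence time from Proposition \ref{prop:local existence in L^2} depends on the initial datum only through the bound $K$ on its $L^2$ norm, so a bounded subsequence $\|u(t_n)\|\le K$ allows continuation past $T_+$, a contradiction. The only cosmetic difference is that you establish uniqueness of the glued solution via an open--closed argument, whereas the paper builds uniqueness into the definition of $T_+$; both are fine.
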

\begin{remark}
	Due to mass conservation given in
	\eqref{eq:mass conservation vanishing average dispersion},
	Corollary \ref{cor:maximal existence in L^2} immediately
	yields a unique global solution when $\dav=0$.
\end{remark}
\begin{proof}[Proof of Proposition \ref{prop:local existence in L^2}]
We will prove the existence of local solutions for positive times only since the case of negative times is done similarly. Fix $u_0 \in L^2(\R)$ and for each $M>0$ define the map $\Phi$ on $\calC([0,M],L^2)$ by
\bdm
\Phi(u)(t)=u_0 +i\int _0 ^t  Q(u(t')) dt'\, ,
\edm
where $Q$ is defined in \eqref{def:nonlinearity}. It is easy to see that
$\Phi(u)\in \calC([0,M],L^2)$.

 For each $R>0$, define the ball
\begin{equation*}
	B_{M,R}=\{u\in \calC([0, M], L^2)  \; : \; \|u\|_{\calC([0, M],L^2)}\leq R\}\, ,
\end{equation*}
equipped with the distance
\begin{equation*}
d(u,v)=\|u-v\|_{\calC([0, M],L^2)}.
\end{equation*}

For
 appropriate values of $R$ and $M$, the map $\Phi$ is a contraction on
 $B_{M,R}$ with respect to the metric $d$.
Indeed,
Lemma \ref{lem:L^2 bound} shows that there exists a constant $C$ depending only on $p$ and the $L^1$, $L^{\f{4}{4-p}}$ norms of $\psi$ such that for all $f, g \in L^2(\R)$,
\bdm 
\| Q(f)\| \le C (\|f\|+\|f\|^{p+1})
\edm
and
\bdm 
\| Q(f)-Q(g)\|\le C  \bigl(1+ \|f\|^{p}+\|g\|^{p}\bigr)\|f-g\|.
\edm
Thus, if $u, v \in \calC([0,M],L^2)$, then
\begin{align*}
\|\Phi(u)(t)\|& \leq \|u_0\|+\int_0 ^t \|Q(u(t'))\|dt' \\
&\leq \|u_0\| +C\int_0^t \|u(t')\|+\|u(t')\|^{p+1}dt'
\end{align*}
and
\begin{align*}
\|\Phi(u)(t)-\Phi(v)(t)\|
& \leq \int_0^t\|Q(u(t'))-Q(v(t'))\|dt'\\
& \leq   C\int_0^t \bigl(1+ \|u(t')\|^{p}+\|v(t')\|^{p}\bigr) \|u(t')-v(t')\| dt'\, .
\end{align*}
Therefore, for all $u, v\in B_{M,R}$,
\beq \label{ineq:into map_zero}
\|\Phi(u)\|_{\calC([0, M], L^2)} \leq \|u_0\| +CM (R+R^{p+1})
\eeq
and
\beq \label{ineq:contraction_zero}
d(\Phi(u),\Phi(v))\leq C M(1+2R^p)d(u,v).
\eeq
Now assume that $\|u_0\|\le K$, set $R=2K$, and choose $M_+>0$ satisfying
\beq \label{choice:M-1}
CM_+(1+(2K)^{p}) <  \f{1}{2}.  \notag
\eeq
Then using \eqref{ineq:into map_zero} and \eqref{ineq:contraction_zero},
we conclude that $\Phi$ is a contraction from $B_{M_+,2K}$ into itself
and since $B_{M_+,2K}$ is complete, Banach's contraction mapping theorem
shows that there exists a unique solution $u$ of
\eqref{eq: Duhamel u} in $B_{M_+,2K}$. This also proves \eqref{eq:local L^2 bound}.
\end{proof}
\begin{remark}
	By standard arguments, the contraction mapping also yields that
	on compact time intervals the
	solution depends continuously on the initial condition.
	A more quantitative bound is derivable with the help of a Gronwall argument, see Proposition \ref{prop:continuous dependence}.
\end{remark}

\begin{proof}[Proof of Corollary \ref{cor:maximal existence in L^2} ]
Given an initial datum $u_0\in L^2(\R)$, let
\begin{equation}
T_+=T_+(u_0)= \sup \{M:\, \exists \text{ unique solution } u\in \calC([0,M],L^2)\text{ with } u(0)=u_0\} \, .  \notag
\end{equation}
Proposition \ref{prop:local existence in L^2} shows $T_+>0$ and that
$u$ is the unique solution of \eqref{eq: Duhamel u} with initial datum $u_0$ for all $t\in [0,T_+)$.
To see the blow--up alternative, assume that  $T_+<\infty$, but
\begin{equation*}
K\coloneqq \liminf_{t\to T_+}\|u(t)\|+1<\infty\, .
\end{equation*}
Then there exists a sequence of times $t_n\to T_+$, as $n\to\infty$, with
$\|u(t_n)\|< K$.

By simply shifting in time, the already proven local existence result from Proposition \ref{prop:local existence in L^2} shows that there is a
time $\Delta T$, depending only on $p$ and the $L^{1}$, $L^{\f{4}{4-p}} $ norms of $\psi$, and $K$,
such that there is a unique solution $\wti{u}\in \calC([t_n, t_n+\Delta T], L^2)$
of \eqref{eq: Duhamel u}. This solution agrees with $u$ on the time
interval $[t_n, T_+) $ and thus concatenating these two unique
solutions one gets, for all $n\in\N$, a unique solution $u$ in
$\calC([0, t_n+\Delta T], L^2)$ for the given initial condition
$u_0$ at time $t=0$. Since $t_n+\Delta T> T_+$ for large enough $n$, this contradicts the maximality of the life time interval $[0,T_+)$.
Thus, if  $0<T_+<\infty$ we must have $\lim_{t\to T_+} \|u(t)\|=\infty$. The case of negative times is done similarly.
\end{proof}

Next, we  present the local existence result in $H^1(\R)$ when the average dispersion does not vanish.
\begin{proposition}\label{prop:local existence in H^1}
Let $\dav\neq 0$. If $h$ satisfies assumption \ref{ass: dav not zero} and  $\psi \in L^1(\R)$,
then there exists a unique local solution of \eqref{eq: Duhamel u}.
More precisely, for any $K>0$ there exist positive numbers
$M_\pm$, depending also on the $L^{1}$ norm of $\psi$ and $J_1, J_2$ from assumption \ref{ass: dav not zero},  such that
for any initial condition $u_0\in H^1(\R)$ with $\|u_0\|_{H^1}\leq K$,
there exists a unique solution $u\in \calC([-M_-, M_+], H^1)$
of \eqref{eq: Duhamel u}.  Moreover,
\begin{equation} \label{eq:local H^1 bound}
	\|u(t)\|_{H^1}\le 2K \quad \text{for all } t\in [-M_-,M_+]\, .
\end{equation}
\end{proposition}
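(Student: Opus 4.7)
The plan is to mimic the contraction argument from Proposition \ref{prop:local existence in L^2}, with the standard Kato trick of closing the fixed point in a weaker norm than the one in which we control boundedness. As before, we only treat positive times. Fix $u_0\in H^1(\R)$ with $\|u_0\|_{H^1}\le K$ and, for $M>0$, define the Duhamel map
\[
   \Phi(u)(t) \coloneqq e^{it\dav\partial_x^2}u_0
        + i\int_0^t e^{i(t-t')\dav \partial_x^2} Q(u(t'))\,dt'
\]
on $\calC([0,M],H^1)$. Since $e^{it\dav\partial_x^2}$ is unitary on both $L^2(\R)$ and $H^1(\R)$ and since $t\mapsto Q(u(t))$ is continuous from $[0,M]$ into $H^1(\R)$ by Lemma \ref{lem:nonlinear H^1 bound}, one checks that $\Phi(u)\in\calC([0,M],H^1)$.

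For $R>0$ I would consider the set
\[
  B_{M,R} = \bigl\{ u\in \calC([0,M],H^1):\; \|u\|_{\calC([0,M],H^1)}\le R \bigr\}
\]
equipped with the weaker $L^2$-valued distance
\[
  d(u,v) = \|u-v\|_{\calC([0,M],L^2)}.
\]
The first ingredient is completeness: if $(u_n)\subset B_{M,R}$ is Cauchy in $d$, it converges to some $u\in \calC([0,M],L^2)$, and the weak lower semicontinuity of $\|\cdot\|_{H^1}$ with respect to $L^2$ convergence on $H^1$-bounded sets forces $u(t)\in H^1$ with $\|u(t)\|_{H^1}\le R$ for every $t$. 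Continuity in $H^1$ at each fixed $t$ follows from the closed-ball argument applied pointwise, so $B_{M,R}$ is a complete metric space in $d$. This is where the main technical care lies, and it is the point at which our weak differentiability assumption on $h$ forces us to avoid measuring the contraction in $H^1$.

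The second ingredient is the two bounds from Lemma \ref{lem:nonlinear H^1 bound}. For $u\in B_{M,R}$, the first bound together with the unitarity of the Schr\"odinger propagator on $H^1$ gives
\[
   \|\Phi(u)(t)\|_{H^1}
   \le \|u_0\|_{H^1} + \int_0^t \|Q(u(t'))\|_{H^1}\,dt'
   \le K + C_1(R)\,M\,R,
\]
where $C_1(R)=C\bigl[J_1(R)+J_2(R)(1+R)\bigr]$ depends only on $R$, on $J_1,J_2$ from assumption \ref{ass: dav not zero}, and on $\|\psi\|_{L^1}$. Similarly, using the second bound of Lemma \ref{lem:nonlinear H^1 bound} and once more the $L^2$-unitarity of $e^{it\dav\partial_x^2}$, for $u,v\in B_{M,R}$,
\[
   d(\Phi(u),\Phi(v))
   \le \int_0^M \|Q(u(t'))-Q(v(t'))\|\,dt'
   \le C_2(R)\,M\,d(u,v),
\]
with $C_2(R)=C\bigl[J_1(R)+J_2(R)(1+R)\bigr]$.

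To close the argument, I set $R=2K$ and pick $M_+>0$ small enough that
\[
   C_1(2K)\,M_+\,(2K) \le K
   \quad\text{and}\quad
   C_2(2K)\,M_+ \le \tfrac12.
\]
Then $\Phi$ maps $B_{M_+,2K}$ into itself and is a $\tfrac12$-contraction in $d$. Banach's fixed point theorem in the complete metric space $(B_{M_+,2K},d)$ produces the unique fixed point $u\in B_{M_+,2K}$, which solves \eqref{eq: Duhamel u} on $[0,M_+]$ and automatically satisfies the bound $\|u(t)\|_{H^1}\le 2K$. Uniqueness within all of $\calC([0,M_+],H^1)$ follows by the same Lipschitz estimate combined with Gronwall's inequality. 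The negative-time case is identical, yielding $M_-$, and together these depend only on $K$, $\|\psi\|_{L^1}$, and $J_1,J_2$, as claimed. The main obstacle, as indicated, is justifying completeness of the hybrid $H^1$-ball with $L^2$-metric; once this Kato-type setup is in place, the rest is a routine contraction argument.
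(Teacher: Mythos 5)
Your overall strategy is exactly the paper's: a Kato--type fixed point argument in which boundedness is tracked in $H^1$ but the contraction is measured in the weaker $L^2$--based distance, using the two bounds of Lemma \ref{lem:nonlinear H^1 bound}, with $R=2K$ and $M_+$ chosen so that the Lipschitz constant is below $\tfrac12$. The quantitative estimates, the choice of parameters, and the Gronwall uniqueness step are all fine and match the paper.

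There is, however, a genuine gap at the step you yourself flag as the main technical point: the set $B_{M,R}=\{u\in\calC([0,M],H^1):\|u\|_{\calC([0,M],H^1)}\le R\}$ is \emph{not} complete under $d(u,v)=\|u-v\|_{\calC([0,M],L^2)}$, and your justification (``continuity in $H^1$ at each fixed $t$ follows from the closed-ball argument applied pointwise'') does not hold. A $d$--limit of $H^1$--continuous, uniformly $H^1$--bounded functions is $L^2$--continuous and pointwise $H^1$--bounded by weak lower semicontinuity, but it is in general only \emph{weakly} $H^1$--continuous in $t$: for instance $u(t)=t\sin(x/t)\chi(x)$ (with $u(0)=0$, $\chi$ a fixed cutoff) is an $L^2$--continuous, $H^1$--bounded curve that is not $H^1$--continuous at $t=0$, and it is the $d$--limit of the $H^1$--continuous truncations $u_n(t)=u(\max(t,1/n))$. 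The standard repair, and the one the paper uses, is to take the ball inside $L^\infty([0,M],H^1)$ instead of $\calC([0,M],H^1)$; that set \emph{is} complete for $d$, the contraction argument goes through verbatim, and the continuity $u\in\calC([0,M_+],H^1)$ of the fixed point is then recovered \emph{a posteriori} from the Duhamel formula, since $t'\mapsto Q(u(t'))$ lies in $L^1([0,M_+],H^1)$ and Lemma \ref{lem:Strichartz}~(ii) then gives $H^1$--continuity of the integral term. With that one substitution your proof is complete and coincides with the paper's.
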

\begin{remark}
	The proof of Proposition \ref{prop:local existence in H^1} follows
	a strategy due to Kato \cite{Kato1987}, see also \cite{Cazenave}. It yields existence and uniqueness, but falls short of proving continuous dependence on the initial datum, i.e., it does not yield well--posedness. This is done in Proposition
	\ref{prop:continuous dependence on H^1}.
\end{remark}

As for the case of vanishing average dispersion, an immediate consequence is
\begin{corollary} \label{cor:maximal existence in H^1}
Let $\dav\neq 0$ and $h$ satisfy assumption \ref{ass: dav not zero} and  $\psi \in L^1(\R)$.
For any  initial datum $u_0\in H^1(\R)$ there exist maximal
life times $T_\pm\in(0,\infty]$ such that there is a unique solution
$u\in \calC((-T_-, T_+), H^1)$ of \eqref{eq: Duhamel u}.
 	 Moreover, the blow--up alternative for solutions holds: If $T_+<\infty$
  	then
 	\begin{equation}
 		\lim_{t\to T_+} \|u(t)\|_{H^1} =\infty  \notag
 	\end{equation}
 	and similarly, if $T_-<\infty$, then
 	\begin{equation}
 		\lim_{t\to -T_-} \|u(t)\|_{H^1} =\infty .  \notag
 	\end{equation}
\end{corollary}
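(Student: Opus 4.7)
The plan is to mimic the argument already given for Corollary \ref{cor:maximal existence in L^2}, but using the $H^1$ local existence result of Proposition \ref{prop:local existence in H^1} in place of the $L^2$ version. Concretely, I would define
\[
T_+(u_0) \coloneqq \sup\{M>0 : \exists\text{ a unique solution } u\in\calC([0,M],H^1)\text{ of \eqref{eq: Duhamel u} with } u(0)=u_0\}
\]
and analogously for $T_-$. Proposition \ref{prop:local existence in H^1} guarantees $T_\pm>0$. Since any two solutions defined on overlapping time intervals agree on their common domain by the uniqueness part of Proposition \ref{prop:local existence in H^1} (which gives uniqueness in a neighborhood of every point in the intersection), one can glue them together to obtain a single unique solution $u\in\calC((-T_-,T_+),H^1)$.

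For the blowup alternative, I would argue by contradiction. Suppose $T_+<\infty$ and set
\[
K \coloneqq \liminf_{t\to T_+}\|u(t)\|_{H^1} + 1 < \infty.
\]
Then there is a sequence $t_n\uparrow T_+$ with $\|u(t_n)\|_{H^1}\le K$ for all $n$. Applying Proposition \ref{prop:local existence in H^1} with initial data $u(t_n)$ yields a number $\Delta T>0$, depending only on $K$, $\|\psi\|_{L^1}$, and the functions $J_1,J_2$ from assumption \ref{ass: dav not zero} (and crucially not on $n$), together with a unique solution $\wti{u}\in\calC([t_n,t_n+\Delta T],H^1)$ of \eqref{eq: Duhamel u} with $\wti{u}(t_n)=u(t_n)$. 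By uniqueness $\wti{u}$ agrees with $u$ on $[t_n,T_+)$, so concatenating them extends $u$ to a solution in $\calC([0,t_n+\Delta T],H^1)$. Choosing $n$ large enough that $t_n+\Delta T>T_+$ contradicts the maximality of $T_+$, forcing $\lim_{t\to T_+}\|u(t)\|_{H^1}=\infty$. The case $T_-<\infty$ is symmetric, handled by time reversal (since the equation is invariant under $t\mapsto -t$ combined with complex conjugation up to a sign of $\dav$, or more directly by running the same argument backwards in time, as Proposition \ref{prop:local existence in H^1} supplies both $M_+$ and $M_-$).

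I do not expect any serious obstacles here, as the structure is entirely parallel to the $L^2$ corollary; the only point requiring care is the observation that the local existence time $\Delta T$ produced by Proposition \ref{prop:local existence in H^1} depends on the initial condition only through its $H^1$ norm (via the monotone functions $J_1, J_2$). This is precisely what makes the contradiction work, since the uniform bound $\|u(t_n)\|_{H^1}\le K$ gives a uniform lower bound on the extension time.
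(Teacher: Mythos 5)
Your argument is exactly the one the paper uses: it declares the proof of Corollary \ref{cor:maximal existence in H^1} to be a straightforward copy of the proof of Corollary \ref{cor:maximal existence in L^2}, which defines $T_+$ as the same supremum and derives the blowup alternative by the identical contradiction, using that the local existence time from Proposition \ref{prop:local existence in H^1} depends on the initial datum only through a bound on its $H^1$ norm. Your proposal is correct and matches the paper's approach.
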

Given Proposition \ref{prop:local existence in H^1}, the proof of Corollary \ref{cor:maximal existence in H^1} is a straightforward copy of the proof of Corollary \ref{cor:maximal existence in L^2}. So it is enough to give the
\begin{proof}[Proof of Proposition \ref{prop:local existence in H^1}]
As before, we consider only the case of positive times. For each $M>0$ and $R>0$, let
\beq\label{def:B}
B_{M,R}=\{u\in L^\infty([0, M], H^1)  \; : \; \|u\|_{L^\infty([0, M],H^1)}\leq R\}  \notag
\eeq
be equipped with the distance
\begin{equation*}
	d(u,v)=\|u-v\|_{ L^\infty([0, M],L^2)}.
\end{equation*}
It is easy to see that boundedness in $H^1$ and
convergence in $L^2$ imply convergence in $H^1$. Thus
$(B_{M,R}, d)$ is a complete metric space, even though the
distance $d$ is measured in $L^2$.
Let $K>0$ and $u_0\in H^1(\R)$ with $\|u_0\|_{H^1}\leq K$ be fixed. Define the map $\Phi$ on $B_{M,R}$ by
\bdm
\Phi(u)(t)=e^{it \dav \partial_x^2 }u_0+i\int _0 ^t e^{i(t-t') \dav \partial_x^2 } Q(u(t'))dt'.
\edm
We can apply the same argument in the proof of Proposition \ref{prop:local existence in L^2}, using Lemma \ref{lem:nonlinear H^1 bound} instead of Lemma \ref{lem:L^2 bound}.
Then we see that, for all $u, v\in B_{M,R}$,
\beq \label{ineq:into map nonzero}
\|\Phi(u)\|_{L^\infty([0, M], H^1)} \leq K +CM \Bigl(J_1(R)+J_2(R)(1+R)\Bigr)R  \notag
\eeq
and
\beq \label{ineq:contraction nonzero}
d(\Phi(u),\Phi(v))\leq C M\Bigl(J_1(R)+J_2(R)(1+R)\Bigr)d(u,v).  \notag
\eeq
Now set $R=2K$ and choose $M_+>0$ satisfying
\beq \label{choice:M-2}
C M_+\Bigl(J_1(2K)+J_2(2K)(1+2K)\Bigr)<  \f{1}{2},  \notag
\eeq
then we obtain that $\Phi$ is a contraction from $B_{M_+,2K}$ into itself, so
$\|u\|_{L^\infty([0, M_+],H^1)}\leq 2K$, which shows \eqref{eq:local H^1 bound}.
Moreover, the second part of Lemma \ref{lem:Strichartz} shows that
$u$ is even in $\calC([0,M_+], H^1)$.
\end{proof}

\section{Continuous dependence on the initial data} \label{sec: local well--posedness}

To complete the proof of  Theorem \ref{thm:local well-posedness in H^1} and the local well--posedness part of Theorem \ref{thm:well-posedness in L^2}, we need to show that the solution depends continuously on the initial datum.
To do this for zero average dispersion, we prove that the map
$u_0 \mapsto u(t)$ is locally Lipschitz continuous on $L^2(\R)$ by a
Gronwall argument.
\begin{proposition} \label{prop:continuous dependence}
Let $\dav=0$, $h$ satisfy assumption \ref{ass: dav zero} and $\psi\in L^{1} (\R)\cap L^{\f{4}{4-p}} (\R) $.
Then, for every $K>0$, there exists a positive constant $C$ depending only
 on $K, p $, and the  $L^1$ and $L^{\f{4}{4-p}}$ norms of $\psi$
 such that for  all initial data $u_0,v_0\in L^2(\R)$ with
 $\|u_0\|,\|v_0\|\le K$  we have
 \begin{equation*}
 \|u-v\|_{\calC([-M_-, M_+],L^2)}\leq e^{C \max(M_-, M_+)}\|u_0-v_0\|,
 \end{equation*}
 where $u$ and $v$ are the corresponding local strong solutions
 of \eqref{eq:main} with initial data $u_0, v_0$ on the time interval
 $[-M_-, M_+]$ of existence, guaranteed by Proposition \ref{prop:local existence in L^2}.
\end{proposition}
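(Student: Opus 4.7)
The plan is to derive the continuous dependence from the Duhamel formula together with the local Lipschitz estimate already established in Lemma \ref{lem:L^2 bound} and a standard Gronwall argument. Since $\dav=0$, the Duhamel formula \eqref{eq: Duhamel u} reduces to $u(t)=u_0+i\int_0^t Q(u(t'))\,dt'$, and similarly for $v$. Subtracting the two identities and taking $L^2$ norms gives
\begin{equation*}
  \|u(t)-v(t)\| \le \|u_0-v_0\| + \left|\int_0^t \|Q(u(t'))-Q(v(t'))\|\,dt'\right|
\end{equation*}
for $t\in[-M_-,M_+]$.

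Next I would use the a priori bound \eqref{eq:local L^2 bound} from Proposition \ref{prop:local existence in L^2}: since $\|u_0\|,\|v_0\|\le K$, the solutions produced by that proposition satisfy $\|u(t')\|,\|v(t')\|\le 2K$ on the common interval of existence $[-M_-,M_+]$. Combined with the local Lipschitz estimate \eqref{ineq:L^2 boundness for Q_2} from Lemma \ref{lem:L^2 bound}, this yields
\begin{equation*}
  \|Q(u(t'))-Q(v(t'))\| \le C_0\bigl(1+2(2K)^p\bigr)\|u(t')-v(t')\|,
\end{equation*}
where $C_0$ depends only on $p$ and the $L^1$, $L^{4/(4-p)}$ norms of $\psi$. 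Setting $C=C_0(1+2(2K)^p)$, the previous display becomes
\begin{equation*}
  \|u(t)-v(t)\| \le \|u_0-v_0\| + C\left|\int_0^t \|u(t')-v(t')\|\,dt'\right|.
\end{equation*}

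The final step is to apply Gronwall's inequality to the continuous nonnegative function $t\mapsto \|u(t)-v(t)\|$ on $[0,M_+]$, and separately on $[-M_-,0]$ by running time backwards (or equivalently applying Gronwall to the reversed function $s\mapsto \|u(-s)-v(-s)\|$), which gives
\begin{equation*}
  \|u(t)-v(t)\| \le e^{C|t|}\|u_0-v_0\| \le e^{C\max(M_-,M_+)}\|u_0-v_0\|
\end{equation*}
for all $t\in[-M_-,M_+]$. Taking the supremum over $t$ produces the claimed bound.

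There is no real obstacle here: the work was all done in Lemma \ref{lem:L^2 bound} (locally Lipschitz estimate of the nonlocal nonlinearity in $L^2$) and Proposition \ref{prop:local existence in L^2} (uniform-in-time bound by $2K$). The only mild care needed is to use the uniform a priori $L^2$ bound to linearize the polynomial-in-norm Lipschitz constant into a genuine constant $C=C(K,p,\psi)$ before invoking Gronwall, and to handle $t<0$ by an obvious symmetric argument rather than by assuming a two-sided form of Gronwall's inequality.
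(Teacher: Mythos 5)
Your proposal is correct and follows essentially the same route as the paper: Duhamel's formula for $\dav=0$, the a priori bound $\|u(t)\|,\|v(t)\|\le 2K$ from Proposition \ref{prop:local existence in L^2}, the Lipschitz estimate \eqref{ineq:L^2 boundness for Q_2}, and Gronwall's inequality (with the same constant, $C_0(1+2(2K)^p)=C_0(1+2^{p+1}K^p)$). The paper likewise reduces to $t\ge 0$ "without loss of generality," exactly as you do by time reversal.
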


\begin{proof}
Without loss of generality, we assume that $t\ge 0$. From
\eqref{eq:local L^2 bound} we know that
$\|u(t)\|, \|v(t)\|\le 2K$ for $0\le t\le M_+$.
Since
$$
u(t)-v(t)=u_0 - v_0 +i\int _0 ^t \left(Q(u(t'))-Q(v(t'))\right)dt',
$$
we can use \eqref{ineq:L^2 boundness for Q_2}  and the triangle inequality
for norms and integrals to  obtain
\begin{align*}
\|u(t)-v(t)\|
 &\leq \|u_0-v_0\|+\int_0 ^t \|Q(u(t'))-Q(v(t'))\| dt'\\
& \leq \|u_0-v_0\|+C_1\int_0 ^t \Bigl(1+ \|u(t')\|^{p}+\|v(t')\|^{p}\Bigr) \| u(t')-v(t')\| dt'\\
&\leq \|u_0 -v_0\| +C_1(1+2^{p+1}K^p)\int _0 ^t \| u(t')-v(t')\|dt'
\end{align*}
for $0\le t\le M_+$.
Therefore, setting $C= C_1(1+2^{p+1}K^p)$, it follows from Gronwall's inequality that if $0\le t\leq M_+$, then
\begin{align*}
\|u(t)-v(t)\| \leq e^{C t} \|u_0-v_0\|
\end{align*}
which completes the proof.
\end{proof}
\begin{remark}
	Using that for zero average dispersion one has mass conservation,
	see the beginning of Section \ref{sec: energy conservation},
	the local solutions are global and the above proof yields
	\begin{align*}
		\|u(t)-v(t)\| \leq e^{C_1(1+ \|u_0\|^p+\|v_0\|^p) |t|} \|u_0-v_0\|
	\end{align*}
	for all $t$, where $C_1$ depends only on $p$ and the $L^1$, $L^{\f{4}{4-p}}$
	norms of $\psi$.
\end{remark}

It remains to show continuous dependence on the initial datum when
$\dav \neq 0$.

\begin{proposition}\label{prop:continuous dependence on H^1}
Let $\dav\not=0$, $h$ satisfy assumption \ref{ass: dav not zero}, and
$\psi \in L^1(\R)$. Then the local solution of the Cauchy problem
\eqref{eq:main} depends continuously on the initial datum.
More precisely, if $ \varphi,\varphi_n\in H^1(\R)$
with  $\varphi_n \to \varphi$ in $H^1(\R)$ as $n\to \infty$, then there exists a common time interval $[-M_-, M_+]$ for which
the strong solutions
$u$,  respectively $u_n$, of the Cauchy problem \eqref{eq:main} with
initial data $\varphi$, respectively $\varphi_n$, exist 
and
$$
u_n\to u \text{\;\; in\;\;}
\calC([-M_-,M_+], H^1) \cap \calC^1((-M_-,M_+), H^{-1})
\quad \text{as } n\to \infty\, .
$$
\end{proposition}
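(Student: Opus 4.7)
The plan is to mirror the proof of Proposition \ref{prop:continuous dependence}: obtain a common interval of existence for $u$ and the $u_n$'s, prove $L^2$-convergence by a Gronwall argument, and then upgrade to $H^1$-convergence. The subtlety that is absent in the zero--average case is that Lemma \ref{lem:nonlinear H^1 bound} gives a Lipschitz estimate for $Q$ only in the $L^2$-norm (with an $H^1$-dependent constant), not a quantitative $H^1$-Lipschitz bound.

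For the common interval, $\varphi_n\to\varphi$ in $H^1$ yields $K:=\sup_n\|\varphi_n\|_{H^1}<\infty$. Proposition \ref{prop:local existence in H^1} then produces $M_\pm>0$, depending only on $K$, on which every $u_n$ and $u$ exists with $\|u_n(t)\|_{H^1},\|u(t)\|_{H^1}\le 2K$. Subtracting the Duhamel representations, using that $e^{it\dav\partial_x^2}$ is an $L^2$-isometry and the $L^2$-Lipschitz estimate of Lemma \ref{lem:nonlinear H^1 bound} applied with the uniform $H^1$-bound $2K$, the Gronwall argument from the proof of Proposition \ref{prop:continuous dependence} gives
\[
    \|u_n(t)-u(t)\|\le e^{C|t|}\,\|\varphi_n-\varphi\|\longrightarrow 0
\]
uniformly on $[-M_-,M_+]$, with $C=C(K,\psi,J_1,J_2)$.

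To upgrade to $\calC([-M_-,M_+],H^1)$, apply $\partial_x$ to the difference of the Duhamel formulas and use the chain rule
\[
    \partial_x P(T_r f)=h(|T_r f|)\,\partial_x T_r f+F(T_r f,\partial_x T_r f),\qquad F(z,w):=h'(|z|)|z|^{-1}\re(\overline{z}w)\,z,
\]
with the convention $F(0,w):=0$, which makes $F$ continuous on $\C\times\C$ thanks to $\lim_{a\to 0}h'(a)a=0$. I split $\partial_x P(T_r u_n)-\partial_x P(T_r u)$ into (a) terms carrying a factor $\partial_x(T_r u_n-T_r u)$, whose $L^2_x$--norm is bounded by $\bigl[J_1(2K)+J_2(2K)(1+2K)\bigr]\|\partial_x u_n(s)-\partial_x u(s)\|$ (using the linearity of $F$ in the second argument); and (b) remainder terms $[h(|T_r u_n|)-h(|T_r u|)]\partial_x T_r u_n$ and $F(T_r u_n,\partial_x T_r u)-F(T_r u,\partial_x T_r u)$. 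From Step 1 and the embedding $\|T_r g\|_{L^\infty}^2\le\|g\|\,\|g'\|$, $T_r u_n(s)\to T_r u(s)$ in $L^\infty$ uniformly in $r$ and $s$; uniform continuity of $h$ on the compact range of $|T_r u_n|,|T_r u|$ yields $o(1)$ in $L^2_x$ for the first remainder, while continuity of $F(\cdot,w)$ together with the pointwise bound $|F(z,w)|\lesssim J_2(|z|)(|z|+1)|w|$ gives the same for the second via dominated convergence in $x$ and then in $r$ against $\psi\in L^1$. Writing the resulting inequality as $\epsilon_n(s)+C\|\partial_x u_n(s)-\partial_x u(s)\|$ with $\epsilon_n\to 0$ uniformly in $s$, a second Gronwall produces $\partial_x u_n\to\partial_x u$ in $\calC([-M_-,M_+],L^2)$, hence $u_n\to u$ in $\calC([-M_-,M_+],H^1)$. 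The $\calC^1((-M_-,M_+),H^{-1})$ convergence then follows from the equation $\partial_t u_n=i\dav\partial_x^2 u_n+iQ(u_n)$, since $\partial_x^2:H^1\to H^{-1}$ is bounded and $Q:H^1\to H^1$ is continuous by Lemma \ref{lem:nonlinear H^1 bound}.

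The main obstacle is the handling of the (b) remainders above. Because $h$ is only continuous and $h'$ may blow up at zero, no quantitative $H^1$-Lipschitz estimate on $Q$ is available, forcing the careful split into a Gronwall-friendly piece and a vanishing piece. The boundary assumption $\lim_{a\to 0}h'(a)a=0$ is essential here: it is exactly what makes $F$ continuous across the zeros of $T_r u$ and lets the dominated-convergence argument close the bootstrap inequality.
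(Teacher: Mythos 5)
Your proposal is correct and follows essentially the same route as the paper: a common existence interval from the uniform $H^1$ bound, $L^2$-convergence first, and then the same decomposition of $\partial_x\bigl(P(T_ru_n)-P(T_ru)\bigr)$ into a part proportional to $\partial_x(T_ru_n-T_ru)$ with uniformly bounded coefficient and remainder terms handled via continuity of $h$ and of $z\mapsto h'(|z|)z$ (your $F$) together with dominated convergence, with the hypothesis $\lim_{a\to0}h'(a)a=0$ playing exactly the role you identify. The only (immaterial) deviations are that you close both estimates with Gronwall where the paper absorbs the derivative term by shrinking $M_+$, and that you use uniform continuity of $h$ on a compact range where the paper invokes dominated convergence a second time.
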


\begin{proof}
Choose a positive $K$ such that $\|\varphi\|_{H^1}, \|\varphi_n\|_{H^1}\le K$
for all $n\in\N$.
It is enough to consider only positive times.
Using Proposition \ref{prop:local existence in H^1} we
then know there exists $M_+>0$ such that on $[0,M_+]$ the solutions $u,u_n$
of \eqref{eq:main} with initial data $\varphi, \varphi_n$ exist for all $n$ and
\beq \label{ineq:sup u_n}
\|u(t)\|_{H^1}, \|u_n(t)\|_{H^1}\leq 2 K\,   
\eeq
for all $0\le t \le M_+$.

It suffices to prove
$$
u_n\to u \quad \text{ in } \calC([0,M_+], H^1)
$$
as $n\to \infty$, since then $Q(u_n)$ converges to  $Q(u)$ in
$\calC([0,M_+], L^2)$ by \eqref{ineq:nonlinear L^2_H^1 estimate} and
$\partial_x^2 u_n$ converges to $\partial_x^2 u$
in $\calC([0,M_+], H^{-1})$. Hence
\begin{equation*}
	\partial_t u_n = i\dav \partial_x^2 u_n + iQ(u_n) \to \partial_t u \text{\;\; in\;\;} \calC([0,M_+], H^{-1})
\quad \text{as } n\to \infty \, .
\end{equation*}
 Furthermore, since $u, u_n\in \calC([0,M_+], H^1)$ for all $n\in \N$,
 it is enough to show
\[
u_n \to u  \quad \text{ in } L^\infty([0,M_+], H^1)\, .
\]
Using
\begin{equation} \label{eq:difference}
u_n(t)-u(t)=e^{it \dav \partial_x^2 }(\varphi_n - \varphi) +i\int _0 ^t e^{i(t-t') \dav \partial_x^2 } \big(Q(u_n(t'))-Q(u(t'))\big)dt'
\end{equation}
and similar arguments as in the proof of Proposition \ref{prop:local existence in H^1}, we obtain
\begin{align*}
  &\|u_n-u\|_{L^{\infty}([0, M_+], L^2)}\\
 \leq  & \|\varphi_n - \varphi \|+ CM_+ \Bigl(J_1(2K)+J_2(2K)(1+2K)\Bigr)\|u_n-u\|_{L^{\infty}([0, M_+], L^2)}\\
 \leq & \|\varphi_n - \varphi \|+ \f{1}{2}\|u_n-u\|_{L^{\infty}([0, M_+], L^2)},
\end{align*}
which yields
\beq\label{ineq:u_n converges to u in LinftyL2}
  \|u_n-u\|_{L^{\infty}([0, M_+],L^2)}\leq 2\|\varphi_n -\varphi\|.
\eeq
It remains to get a similar bound on
$\|\partial_x(u_n-u)\|_{L^{\infty}([0, M_+],L^2)}$. Using \eqref{eq:difference}
we also get
\beq\label{ineq:derivative of nonlinear term 1}
\begin{split}
\|\partial_x(u_n-u)(t)\|& \leq \|\varphi_n'-\varphi'\|
+ \int _0 ^t \| \partial_x\big(Q(u_n(t'))-Q(u(t'))\big)\|dt'\\
&\leq \|\varphi_n'-\varphi'\| + \int _0 ^t \int _\R  \| \partial_x\big(P(T_ru_n(t'))-P(T_r u(t'))\big)\| |\psi(r)|  dr dt'.
\end{split}
\eeq
Note that, for any differentiable complex-valued functions $f$ and $g$ on $\R$,
\begin{align*}
&\f{d}{dx} (h(|f|)f-h(|g|)g)\\
= & h(|f|)f' - h(|g|)g' +\f{1}{2} \big[h'(|f|)|f|f' - h'(|g|)|g|g'\big]
  + \f{1}{2} \big[ h'(|f|)|f|^{-1}f^2 \overline{f}' - h'(|g|)|g|^{-1}g^2 \overline{g}'\big]\\
= & \ h(|f|)(f'-g')+(h(|f|)-h(|g|))g'+\f{1}{2}h'(|f|)|f|(f'-g')+ \f{1}{2}\left(h'(|f|)|f|-h'(|g|)|g|\right)g' \\
&\quad  +\f{1}{2}h'(|f|)|f|^{-1}f^2(\overline{f}'-\overline{g}') + \f{1}{2}\left(h'(|f|)|f|^{-1}f^2- (h'(|g|))|g|^{-1}g^2\right)\overline{g}'.
\end{align*}
Thus
\begin{align*}
\left|\f{d}{dx} \big[h(|f|)f-h(|g|)g\big]\right|
	&\le  \ \Big(\big|h(|f|)\big| + \big|h'(|f|)f\big| \Big)\big|f'-g'|
	+\big|h(|f|)-h(|g|)\big| |g'| \\
	&\phantom{\le ~} +\big|h'(|f|)|f|-h'(|g|)|g|\big||g'| + \big|h'(|f|)|f|^{-1}f^2-h'(|g|)|g|^{-1}g^2\big||g'|.
\end{align*}
We apply this in \eqref{ineq:derivative of nonlinear term 1} to get
\beq\label{est:derivative}
\begin{aligned}
& \| \partial_x (u_n-u)(t)\|\\
 & \le  \ \|\varphi_n'-\varphi'\| \\
 &\phantom{\le~~}+ \int_0 ^t\int_\R
  \left\| \big[|h(|T_r u_n(t')|)|+ |h'(|T_r u_n(t')|)T_ru_n(t')|\big]\partial_x (T_ru_n-T_ru)(t') \right\||\psi(r)|  dr dt' \\
 &\phantom{\le ~~}+  \int _\R  \left\|\big[h(|T_r u_n|)-h(|T_ru|)\big]\partial_x (T_r u)
\right\|_{L^1([0,M_+],L^2)}|\psi(r)| dr\\
 &\phantom{\le ~~}+    \int _\R  \left\|\left[h'(|T_r u_n|)|T_r u_n|-h'(|T_ru|)|T_ru|\right]\partial_x (T_r u)
\right\|_{L^1([0,M_+],L^2)}|\psi(r)| dr\\
 &\phantom{\le ~~}+    \int _\R \left\|\left[h'(|T_r u_n|)|T_r u_n|^{-1}(T_r u_n)^2-h'(|T_ru|)|T_ru|^{-1}(T_r u)^2\right]\partial_x( T_r u) \right\|_{L^1([0,M_+],L^2)}|\psi(r)|  dr.
\end{aligned}
\eeq
Note
\beq\label{eq: bounded in L infinity}
\|T_r u_n\|_{L^{\infty}([0, M_+], L^{\infty})}
	\le \|T_r u_n\|_{L^{\infty}([0, M _+],H^1)} = \|u_n\|_{L^{\infty}([0, M _+],H^1)}\le 2K
\eeq
for all $n$ and $r\in \R$ because of \eqref{ineq:sup u_n}. So
\begin{equation*}
	 \||h(|T_r u_n|)|+ |h'(|T_r u_n|)T_ru_n\|_{L^\infty([0,M_+],L^\infty)}
	 \le   J_1(2K)+ J_2(2K)(1+2K)\, ,
\end{equation*}
hence the first integral in \eqref{est:derivative} is bounded:
\beq\label{ineq:cont_dependence}
\begin{aligned}
	\int_0 ^t\int_\R &
\left\|\big[|h(|T_r u_n(t')|)|+ |h'(|T_r u_n(t')|)T_ru_n(t')|\big]\partial_x (T_ru_n-T_ru)(t')\right\||\psi(r)|  dr dt'
	\\
	& \le  M_+ \Big(J_1(2K)+ J_2(2K)(1+2K)\Big)\|\psi\|_{L^1}\|\partial_x (u_n-u)\|_{L^\infty([0, M_+],L^2)}.  \notag
\end{aligned}
\eeq
For the second integral in \eqref{est:derivative}, use \eqref{eq: bounded in L infinity} to obtain
\begin{equation} \label{eq:majorant}
\left|\big[h(|T_r u_n|)-h(|T_ru|)\big]\partial_x (T_r u)
\right|
\le (J_1(|T_r u_n|)+J_1(|T_r u|))\, \left|\partial_x T_r u  \right|
 \le  2J_1(2K)\, \left| \partial_x T_r u  \right|.
\end{equation}
 Then
 \begin{align*} 	
 	\left\|\big[h(|T_r u_n|)-h(|T_ru|)\big]\partial_x (T_r u)
	\right\|_{L^1([0,M_+],L^2)}
	&\lesssim \left\|\partial_x (T_r u) \right\|_{L^1([0,M_+],L^2)} \\
	&= \left\|\partial_x u \right\|_{L^1([0,M_+],L^2)}
\le 2K M_+\, .
 \end{align*}
Thus, since $\psi\in L^1(\R)$, by the dominated convergence theorem, it is enough to show
 \begin{equation}\label{eq:convergence}
 	\lim_{n\to\infty}
 		\left\|\big[h(|T_r u_n|)-h(|T_ru|)\big]\partial_x (T_r u)
	\right\|_{L^1([0,M_+],L^2)} =0
\end{equation}
for almost every $r\in\R $ to conclude that the third integral in \eqref{est:derivative} converges to zero as $n\to\infty$.\\
Fix $r\in \R$. Then
\bdm
\|T_r u_n -T_r u\|_{L^\infty([0, M_+], L^2)}=\|u_n-u\|_{L^\infty([0, M_+], L^2)}\leq 2\|\varphi_n-\varphi\|,
\edm
where we used \eqref{ineq:u_n converges to u in LinftyL2}.
Therefore, for almost all $(x,t) \in \R \times [0, M_+]$, $T_ru_n \to T_ru$ as $n \to \infty$. Hence $ h(|T_r u_n|)-h(|T_ru|) \to 0$ as $n\to\infty$,
since $h$ is continuous.  Thus, because of \eqref{eq:majorant} we can use the dominated convergence theorem again to see that \eqref{eq:convergence} holds.\\
This shows
\begin{equation*}
	\lim_{n\to\infty}\int _\R  \left\|\left[h(|T_r u_n|)-h(|T_ru|)\right]\partial_x (T_r u)
\right\|_{L^1([0,M_+],L^2)}|\psi(r)| dr=0\, .
\end{equation*}
To show that the last two integrals in \eqref{est:derivative} converge
to zero as $n\to\infty$,  note that the maps $ z\mapsto h'(|z|)z$ and
$z\mapsto h'(|z|)\frac{z^2}{|z|}$, extended by zero to $z=0$, are
continuous on the complex plane, by assumption.
Moreover,
\bdm
\begin{aligned}
\Big| \Big[h'(|T_r u_n|)|T_r u_n|&-h'(|T_ru|)|T_ru| \Big]\partial_x (T_r u)
\Big|\\
\leq &\left[J_2 (|T_r u_n|)(1+|T_r u_n|)+ J_2(|T_r u|) (1+|T_r u|)\right]  \,
\left|\partial_x T_r u \right|\\
 \leq & J_2(2K) (2+4K) \,\left| \partial_x T_r u  \right|
 \end{aligned}
 \edm
and
\bdm
\begin{aligned}
\left| \left[h'(|T_r u_n|)|T_r u_n|^{-1}(T_r u_n)^2-h'(|T_ru|)|T_ru|^{-1}(T_r u)^2\right] \partial_x( T_r u)\right|
\leq & J_2(2K) (2+4K) \,\left| \partial_x T_r u  \right|
\end{aligned}
\edm
for almost all $(x,t) \in \R \times [0, M_+]$. Thus we
can use the same argument as for the third integral in \eqref{est:derivative}
to show that the last two integrals in \eqref{est:derivative} converge to
zero as $n\to\infty$.

Thus we end up with
\begin{align*}
	\| \partial_x (u_n-u)(t)\|_{L^\infty([0,M_+],L^2)}
		\lesssim \|\varphi_n'-\varphi'\|+
			M_+ \| \partial_x (u_n-u)(t)\|_{L^\infty([0,M_+],L^2)}
		+ \oh_n(1),
\end{align*}
where $\oh_n(1)$ denotes terms which go to zero in the limit $n\to\infty$.
Choosing $M_+$ small enough, we conclude
\begin{equation*}
		\| \partial_x (u_n-u)(t)\|_{L^\infty([0,M_+],L^2)}
		\lesssim \|\varphi_n'-\varphi'\| +  \oh_n(1)\, . \qedhere
\end{equation*}
\end{proof}

\section{Mass and energy conservation } \label{sec: energy conservation}
The usual approach to prove global existence from local existence on
$L^2(\R)$  is to show that the mass
\begin{align} \label{mass-again}
	m(u(t))= \|u(t)\|^2
\end{align}
is conserved. This is easy when the average dispersion vanishes
since then
\begin{align*}
	\dot{u}(t)\coloneqq\partial_t u(t) = iQ(u(t)) \in L^2(\R)
\end{align*}
for any strong solution $u$ of \eqref{eq:main}. Thus $\|u(t)\|^2= \la u(t), u(t) \ra$ is differentiable in $t$ and
\begin{align}\label{eq:mass conservation vanishing average dispersion}
	\frac{d}{dt} \|u(t)\|^2 = 2\re \la u, \dot{ u} \ra  = 2\re(i\la u, Q(u) \ra  ) = 0
\end{align}
Thus $\|u(t)\|^2$ is constant, i.e., the mass is conserved.
The last equality in
\eqref{eq:mass conservation vanishing average dispersion} follows from
\begin{equation}\label{eq:real}
  \begin{split}
	\la u, Q(u) \ra
		&= \int_\R \la u, T_r^{-1}\left(P(u)\right) \ra \, \psi(r) dr
			= \int_\R \la T_r u, P(u) \ra\, \psi(r) dr \\
		&= \iint_{\R^2 } h(|T_ru|)|T_r u|^2 \, dx\, \psi(r) dr
  \end{split}
\end{equation}
which shows that $\la u, Q(u) \ra$ is real.

The conservation of mass when $\dav\not=0$ is a little bit trickier: In order to calculate the derivative of the mass one would like to argue that
\begin{align*}
	\frac{d}{dt} \|u(t)\|^2 &= 2\re  \la u, \dot{ u} \ra
	= 2\re\big(i \la u, \dav\partial_x^2u  \ra + i\la u, Q(u)\ra  \big) \\
	&= 2\re\big(-i\dav\la \partial_x u, \partial_x u  \ra + i\la u, Q(u)\ra\big)
	= 0
\end{align*}
since both $\la \partial_x u, \partial_x u  \ra$ and $\la u, Q(u) \ra$ are real.
This argument misses, however, that $\partial_t u\in H^{-1}(\R)$, so
$\la u ,\partial_t u\ra $ is not defined.

While the above  argument
can be saved using that $u\in H^1(\R)$, so the pairing of $u$ and
$\partial_x^2 u$ is well--defined, the problem is much more pronounced, when
one tries to prove conservation of the energy
\begin{align} \label{energy-again}
	E(u(t))= \f{\dav}{2} \|\partial_x u (t)\|^2-\iint_{\R^2} V(|T_r u(t)|)\,dx\psi(r)dr\,
\end{align}
as a first step in order to from local existence to global existence.
Here, $V$ is the antiderivative of the nonlinearity $P$ with $V(0)=0$,
i.e., $V(a)=\int_0^a P(s)\, \mathrm{d}s$ for $a\in \R_+$.

In this case, the derivative of the kinetic energy of $u$ is \emph{not well--defined} since, informally
\begin{align*}
	\frac{d}{dt} \|\partial_x u (t)\|^2
    =2\re \la \partial_x u,\partial_x \partial_t u\ra
	= 2\re  \big(i\dav \la \partial_x u, \partial_x\partial_x^2 u \ra
	+ i\la \partial_x u, \partial_x Q(u)\ra  \big) \, .
\end{align*}
However, since $\partial_x u\in L^2(\R)$ and $\partial_x^3u\in H^{-2}(\R)$, the scalar product $\la \partial_x u, \partial_x\partial_x^2 u \ra$ is not defined anymore.

In order to circumvent this problem, one usually approximates the
solution $u$ by smooth ones and uses an approximation argument.
Following this route, one has
to study solutions of \eqref{eq:main} for initial condition in
Sobolev spaces $H^s(\R)$ with high enough regularity $s>1$.  This
poses additional conditions on the nonlinearity, in particular,
high enough differentiability of $h$, which we need to
avoid. To circumvent this problem we will use the
\emph{twisting trick}  from \cite{AHH, Ozawa}, which
goes back to Dirac's interaction picture in quantum mechanics.

As a warm up, we use the twisting trick to give a simple proof of
mass conservation.
\begin{proposition}[Mass conservation]\label{prop: mass conservation}
  Any solution $u\in\calC([-M_-,M_+], H^1)$ for $\dav\neq 0$, or $u\in\calC([-M_-,M_+], L^2)$ for $\dav= 0$, of the integral equation \eqref{eq: Duhamel u} has conserved mass,
  \begin{equation}\label{mass conservation}
  	\|u(t)\|^2 = \|u_0\|^2 \quad \text{ for all } t\in [-M_-,M_+]\, .
  \end{equation}
\end{proposition}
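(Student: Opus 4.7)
The obstruction flagged in the paper is that a naive computation of $\frac{d}{dt}\|u(t)\|^2 = 2\re \la u(t), \partial_t u(t)\ra$ breaks down when $\dav\neq 0$, because $\partial_t u(t)\in H^{-1}$ rather than $L^2$, so the pairing is not literally an inner product. The plan is to eliminate the unbounded operator $\dav \partial_x^2$ from the time derivative via a twisting change of variable, mimicking what one would do for the free Schr\"odinger flow.

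Set $v(t)\coloneqq e^{-it \dav\partial_x^2}u(t)$. Applying $e^{-it\dav\partial_x^2}$ to the Duhamel identity \eqref{eq: Duhamel u} gives
\beq\label{eq:twist-Duhamel}
  v(t) = u_0 + i\int_0^t e^{-it'\dav\partial_x^2} Q(u(t'))\, dt'. \notag
\eeq
The map $t'\mapsto Q(u(t'))$ is continuous into $L^2(\R)$: for $\dav=0$ this follows from Lemma \ref{lem:L^2 bound} together with continuity of $u$ into $L^2$, and for $\dav\neq 0$ from Lemma \ref{lem:nonlinear H^1 bound}, continuity of $u$ into $H^1$, and the embedding $H^1\subset L^2$. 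Composing with the strongly continuous unitary group $e^{-it'\dav\partial_x^2}$ preserves continuity into $L^2$. Hence $v\in\calC^1([-M_-,M_+],L^2)$ with
\beq\label{eq:vdot}
  \dot v(t)= i\, e^{-it\dav\partial_x^2} Q(u(t)) \in L^2(\R). \notag
\eeq

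Because $e^{it\dav\partial_x^2}$ is unitary on $L^2$, one has $\|u(t)\|^2=\|v(t)\|^2$, and the latter is now genuinely differentiable as the square norm of a $\calC^1$ Hilbert-space curve:
\beq\label{eq:derivative}
  \frac{d}{dt}\|u(t)\|^2 = 2\re \la v(t),\dot v(t)\ra
  = -2\im \la v(t), e^{-it\dav\partial_x^2} Q(u(t))\ra
  = -2\im \la u(t), Q(u(t))\ra,  \notag
\eeq
the last step using unitarity of $e^{it\dav\partial_x^2}$ once more.

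It remains to verify that $\la u(t), Q(u(t))\ra\in\R$. Unfolding the definition of $Q$, using unitarity of $T_r$ on $L^2$ and $P(z)=h(|z|)z$ with $h$ real-valued,
\begin{align*}
  \la u, Q(u)\ra
   &= \int_\R \la u, T_r^{-1}P(T_r u)\ra \psi(r)\, dr
   = \int_\R \la T_r u, P(T_r u)\ra \psi(r)\, dr \\
   &= \iint_{\R^2} h(|T_r u|)\, |T_r u|^2\, dx\, \psi(r)\, dr \in \R,
\end{align*}
which is legitimate since the iterated integrals are absolutely convergent by the bounds of Lemma \ref{lem:L^2 bound} (resp.\ Lemma \ref{lem:nonlinear H^1 bound}) and $\psi\in L^1$. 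Therefore $\frac{d}{dt}\|u(t)\|^2\equiv 0$, giving \eqref{mass conservation}. The only conceptual obstacle---the loss of regularity in $\partial_t u$ when $\dav\neq 0$---is defused at the outset by passing to $v$, after which the argument reduces to the zero-dispersion computation already sketched in the paper.
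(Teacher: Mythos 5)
Your proof is correct and follows essentially the same route as the paper: the identical twisting substitution $v(t)=e^{-it\dav\partial_x^2}u(t)$, differentiability of $v$ from the twisted Duhamel formula plus the mapping properties of $Q$ (Lemmas \ref{lem:L^2 bound} and \ref{lem:nonlinear H^1 bound}), unitarity to reduce to $\tfrac{d}{dt}\|v(t)\|^2$, and the realness of $\la u,Q(u)\ra$. Nothing is missing.
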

\begin{proof}
  In order to rigorously show conservation of mass and energy when $\dav\not=0$, we
  twist the solution $u$. In physics this is known as Dyson's interacting picture. Given $u$ let  $v(t)\coloneqq  e^{-it\dav \partial_x^2}u(t)$.
  Then since $u$ solves \eqref{eq: Duhamel u}, $v$ solves
  \begin{equation}\label{eq: twisted Duhamel}
  	 v(t) = u_0 +i \int _0 ^t e^{-it'\dav \partial _x^2} Q(u(t'))\,dt'.
  \end{equation}
  Under the assumptions on the nonlinearity, $Q$ maps $L^2(\R)$ boundedly
  into $L^2(\R)$ for $\dav=0$, respectively, $H^1(\R)$ boundedly into $H^1(\R)$ when
  $\dav\not = 0$. Thus \eqref{eq: twisted Duhamel} shows that $v$ is differentiable with respect to $t$ and
  \begin{equation}\label{eq: derivative v}
  	\dot{v}(t)= \partial_t v(t) =  ie^{-it\dav\partial_x^2}Q(u(t))
  \end{equation}
  is in $L^2(\R)$ when $\dav=0$, respectively, in $H^1(\R)$ when $\dav\not=0$.
  Since $e^{-it\dav\partial_x^2}$ is unitary on $L^2(\R)$, we have
  $\|u(t)\|=\|v(t)\|$ for all $t$, hence
  \begin{align*}
  	\f{d}{dt} \|u(t)\|^2
  		&= \f{d}{dt}\|v(t)\|^2 = 2\re \la v(t), \dot{v}(t) \ra
  			= 2\re\la v(t), ie^{-it\dav\partial_x^2}Q(u(t)) \ra \\
  		&= 2\re(i\la e^{it\dav\partial_x^2}v(t), Q(u(t)) \ra ) = 2\re(i\la u(t), Q(u(t))\ra) =0\,
  \end{align*}
  since \eqref{eq:real} shows that $\la u(t), Q(u(t))\ra$ is real.
Hence the $L^2$ norm of the strong solution $u$ is constant in $t$.
\end{proof}
In the following we abbreviate the nonlocal nonlinearity in \eqref{energy} by
\begin{align}\label{eq:nonlinearity abbreviation}
  N(f) = \iint_{\R^2} V(|T_rf|)\, dx\psi(r) dr 	.
\end{align}
Then the energy of $u$ is given by
\begin{align}
	E(u(t))= \f{\dav}{2} \|\partial_x u (t)\|^2 - N(u(t)) \, .  \notag
\end{align}
\begin{proposition}[Energy conservation, $\dav\not = 0$]\label{prop: energy conservation}
Any solution $u\in\calC([-M_-,M_+], H^1)$ of the
integral equation \eqref{eq: Duhamel u} has conserved energy,
  \begin{equation}\label{energy conservation}
  	E(u(t)) = E(u_0) \quad \text{ for all } t\in [-M_-,M_+]\, .
  \end{equation}
\end{proposition}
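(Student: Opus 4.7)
My approach would be to extend the twisting trick used for mass conservation in Proposition \ref{prop: mass conservation}. With $v(t) = e^{-it\dav\partial_x^2}u(t)$, we have $v\in\calC^1([-M_-,M_+], H^1)$ and $\dot v(t)=ie^{-it\dav\partial_x^2}Q(u(t))\in H^1(\R)$ by Lemma \ref{lem:nonlinear H^1 bound}. The plan is to differentiate the kinetic and nonlinear parts of $E$ separately, show each equals a multiple of $\im\la\partial_x u,\partial_x Q(u)\ra_{L^2}$, and observe that they cancel. For the kinetic term, since $\partial_x$ commutes with $e^{-it\dav\partial_x^2}$ and this operator is unitary on $L^2(\R)$, $\|\partial_x u(t)\|=\|\partial_x v(t)\|$, and applying the mass-conservation reasoning to $\partial_x v$ (well defined because $\partial_x\dot v\in L^2$) gives
\begin{equation*}
\frac{d}{dt}\|\partial_x u(t)\|^2 = 2\re\la\partial_x v,\partial_x\dot v\ra_{L^2} = 2\re\bigl(i\la\partial_x u,\partial_x Q(u)\ra_{L^2}\bigr) = -2\im\la\partial_x u,\partial_x Q(u)\ra_{L^2}.
\end{equation*}

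For the nonlinear piece, I would use $T_r u(t)=T_{r+t\dav}v(t)$ to rewrite $N(u(t))=W(t\dav,v(t))$ with
\begin{equation*}
W(\sigma,f):=\iint_{\R^2}V(|T_{\sigma+r}f|)\,dx\,\psi(r)\,dr,\qquad \sigma\in\R,\ f\in H^1(\R),
\end{equation*}
and apply the chain rule. A direct calculation via $V'(a)=h(a)a$ gives
\begin{equation*}
\partial_f W(\sigma,f)[h]=\re\la e^{-i\sigma\partial_x^2}Q(e^{i\sigma\partial_x^2}f),\,h\ra_{L^2},
\end{equation*}
so at $(\sigma,f)=(t\dav,v(t))$ this evaluates to $\re\la e^{-it\dav\partial_x^2}Q(u),\dot v\ra=\re\la Q(u),iQ(u)\ra=0$. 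For $\partial_\sigma W$, the free Schr\"odinger identity $\partial_s T_s f=i\partial_x^2 T_s f$ (interpreted as an $H^{-1}$-derivative since $f\in H^1$), together with the duality identity $\la P(T_s f),\partial_x^2 T_s f\ra_{H^1,H^{-1}}=-\la\partial_x P(T_s f),\partial_x T_s f\ra_{L^2}$, yields
\begin{equation*}
\partial_\sigma W(\sigma,f)=\int_\R\im\la\partial_x P(T_{\sigma+r}f),\partial_x T_{\sigma+r}f\ra_{L^2}\,\psi(r)\,dr.
\end{equation*}
Evaluating at $(t\dav,v(t))$, using $T_{t\dav+r}v=T_r u$, $\partial_x T_r=T_r\partial_x$, unitarity of $T_r$, and Fubini to pull $T_r^{-1}$ through the $r$-integral, this simplifies to $\im\la\partial_x Q(u),\partial_x u\ra_{L^2}=-\im\la\partial_x u,\partial_x Q(u)\ra_{L^2}$. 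Hence $\frac{d}{dt}N(u(t))=-\dav\im\la\partial_x u,\partial_x Q(u)\ra_{L^2}$, which exactly cancels the kinetic contribution $\frac{\dav}{2}\frac{d}{dt}\|\partial_x u\|^2$, giving $\dot E(u(t))=0$ and hence \eqref{energy conservation}.

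The main obstacle is establishing the joint $C^1$-regularity of $W$ on $\R\times H^1$, i.e., justifying differentiation under the double integral with respect to $\sigma$ and showing that both partial derivatives are continuous in $(\sigma,f)$. This requires dominated-convergence estimates based on $\psi\in L^1(\R)$, the unitarity of $T_s$ on $H^1$, and $L^2$-bounds on $\partial_x P(T_s f)$ obtained from Lemma \ref{lem:nonlinear H^1 bound}. Crucially, no pointwise $H^2$-regularity of $u$ is needed: all second-order derivatives stay inside the $H^1/H^{-1}$ duality pairing and get integrated by parts before use, which is precisely what allows us to bypass the usual approximation arguments that would impose stronger differentiability on $h$.
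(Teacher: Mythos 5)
Your argument is correct, and the treatment of the kinetic term via the twisted solution $v(t)=e^{-it\dav\partial_x^2}u(t)$ is identical to the paper's. Where you diverge is in computing $\frac{d}{dt}N(u(t))$. The paper first establishes, for $w\in\calC^1([-M_-,M_+],H^1)$, the formula $\partial_t N(w(t))=\re\la \dot w(t),Q(w(t))\ra$, rewrites the pairing as $\re\la (1-\partial_x)^{-1}\dot w(t),(1+\partial_x)Q(w(t))\ra$, and extends it by density to $w\in\calC([-M_-,M_+],H^1)\cap\calC^1([-M_-,M_+],H^{-1})$ before substituting $\dot u=i\dav\partial_x^2u+iQ(u)$; the term $\la Q(u),Q(u)\ra$ is real and drops out, and the $\partial_x^2$ term is converted to $-\la\partial_x u,\partial_x Q(u)\ra$ by moving $\partial_x(1+\partial_x)^{-1}$ across. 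You instead absorb the entire low-regularity part of $\dot u$ into the explicit time dependence of the free flow, writing $N(u(t))=W(t\dav,v(t))$ and splitting the derivative into a $\sigma$-derivative (which carries the $i\partial_x^2$ and is tamed by the $H^1/H^{-1}$ duality $\la P(T_sf),\partial_x^2T_sf\ra=-\la\partial_x P(T_sf),\partial_x T_sf\ra$, legitimate since $P$ maps $H^1$ into $H^1$ by Lemma \ref{lem:nonlinear H^1 bound}) and an $f$-derivative evaluated at $\dot v\in H^1$, which vanishes since $\re\la Q(u),iQ(u)\ra=0$. Both computations land on $\partial_tN(u(t))=-\dav\im\la\partial_x u,\partial_x Q(u)\ra$ and the same cancellation. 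What your version buys is a more systematic use of the twisting trick: every object you differentiate is genuinely $\calC^1$ with values in $H^1$ except the explicit free flow, whose non-smoothness is completely understood. What it costs is the joint chain rule for $W(\sigma,f)$ on $\R\times H^1$, i.e., differentiability of $\sigma\mapsto\iint V(|T_{\sigma+r}f|)\,dx\,\psi(r)dr$ together with enough joint continuity of the partials to combine them; you correctly flag this as the remaining work. It is of essentially the same nature and difficulty as the density extension the paper invokes for \eqref{eq: time derivative N}, and is handled by the same tools ($\psi\in L^1$, unitarity of $T_s$ on $H^1$, dominated convergence), so I regard it as a deferred technicality rather than a gap.
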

\begin{proof}
  We use again the twisted solution $v(t) = e^{-it\dav\partial_x^2}u(t)$.
  Since $e^{-it\dav\partial_x^2}$ commutes with $\partial_x$, we have
 \begin{equation*}
    E(u(t))
  		= \frac{\dav}{2}\|\partial_x u(t)\|^2 - N(u(t))
  		= \frac{\dav}{2}\|\partial_x v(t)\|^2 - N(u(t)).
 \end{equation*}  	
 Using again $\dot{v}(t) = \partial_t v(t) = ie^{-it\dav\partial_x^2}Q(u(t)) $, one sees that the first term is differentiable in $t$ with
  \begin{align} \label{eq: derivative partial v}
  \f{d}{dt} \|\partial_x v(t)\|^2
  		&= 2\re \la \partial_x v(t), \partial_x \dot{v}(t)  \ra
  			=  2\re \la \partial_x  v(t), i\partial_xe^{-it\dav\partial_x^2}Q(u(t))  \ra \nonumber\\
  		&= -2\im \la \partial_x  u(t), \partial_x Q(u(t))  \ra \, .
  \end{align}
  To compute the derivative of the second term, let $w\in \calC^1([-M_-,M_+], H^1)$ and consider $N(w(t))$. The chain rule yields
  \begin{align}
  	\partial_t N(w(t))
  		& = DN(w(t))[\dot{w}(t)]
  			= \iint_{\R^2} V'(|T_r w(t)|) \re \Big(\frac{T_r w(t)}{|T_r w(t)|}\ol{T_r \dot{w}(t)}\Big)  \, dx \psi(r)dr \nonumber\\
  		& = \iint_{\R^2} P(|T_r w(t)|) \re \Big(\frac{T_r w(t)}{|T_r w(t)|}\ol{T_r \dot{w}(t)}\Big)\, dx \psi(r)dr = \re \la \dot{w}(t), Q(w(t)) \ra \nonumber\\
  		&= \re \la (1-\partial_x)^{-1}\dot{w}(t), (1+\partial_x) Q(w(t)) \ra,
  			\label{eq: time derivative N}
  \end{align}
  where we used that $\partial_x$ is skew adjoint, so
  $1-\partial_x:H^1(\R)\to L^2(\R)$ is invertible.

 The right hand side of \eqref{eq: time derivative N} extends to
 $w\in \calC([-M_-,M_+], H^1)\cap\calC^1([-M_-,M_+], H^{-1}) $, by the usual
 density arguments: If $\dot{w}(t)\in H^{-1}(\R)$, then
 $(1-\partial_x)^{-1}\dot{w}(t)\in L^2(\R)$ and $Q(w(t))\in H^1(\R)$,
 so $(1+\partial_x)Q(w(t))\in L^2(\R)$.
 This shows that $N(w(t))$ is
 differentiable in $t$ with derivative given by
 the last line of \eqref{eq: time derivative N} for any
 $w\in \calC([-M_-,M_+], H^1)\cap\calC^1([-M_-,M_+], H^{-1})$.
 \smallskip

 Any solution $u\in\calC([-M_-,M_+], H^1)$ of the
  integral equation \eqref{eq: Duhamel u} has derivative
  \begin{equation}\label{eq: time derivative u}
  	\dot{u}(t)= \partial_t u(t) =i \dav \partial_x^2 u(t)+i Q(u(t))\in H^{-1}(\R) \notag
  \end{equation}
  and the right hand side above is continuous in $t$ with values in
  $H^{-1}(\R)$. So \eqref{eq: time derivative N} applies to $u$ and shows that
  for any solution $u\in \calC([-M_-,M_+], H^1)$ of \eqref{eq: Duhamel u}, $N(u(t))$ is differentiable in $t$ with derivative
  \begin{equation}
  	\begin{split}
  	\partial_t N(u(t))
  		&= \re \la (1-\partial_x)^{-1}\dot{u}(t), (1+\partial_x)Q(u(t)) \ra \\ \notag
  		&= \re \la i(1-\partial_x)^{-1}(\dav\partial_x^2u(t)+ Q(u(t)), (1+\partial_x)Q(u(t)) \ra \\ \notag
  		&= \im   \la (1-\partial_x)^{-1}(\dav\partial_x^2u(t)+ Q(u(t)), (1+\partial_x)Q(u(t)) \ra \, . \notag
  	\end{split}
  \end{equation}
 Note that
 \begin{align*}
 	\la (1-\partial_x)^{-1} Q(u(t)), (1+\partial_x)Q(u(t)) \ra
 		= \la Q(u(t)), Q(u(t)) \ra \in \R
 \end{align*}
 and, since $-\partial_x(1+\partial_x)^{-1}$ is the adjoint of
 $(1-\partial_x)^{-1}\partial_x$ and bounded on $L^2(\R)$,
 \begin{align*}
 	\la (1-\partial_x)^{-1}\partial_x^2u(t), (1+\partial_x)Q(u(t))\ra
 	& = \la \partial_xu(t), -\partial_x(1+\partial_x)^{-1}(1+\partial_x)Q(u(t))\ra \\
 	&  =-\la \partial_x u(t), \partial_xQ(u(t)) \ra\, .
 \end{align*}
 Thus
  \begin{equation}
  	\partial_t N(u(t))
  		= -\dav \im  \la \partial_x u(t), \partial_xQ(u(t))\ra, \notag
  \end{equation}
 which together with \eqref{eq: derivative partial v} proves that the energy $E(u(t))$ is differentiable and
  \begin{align*}
  	\f{d}{dt} E(u(t))
  =0\,
  \end{align*}
  for all $t\in [-M_-,M_+]$.
  Hence the energy is constant.

\end{proof}

\begin{remark}
	In the case of zero average dispersion, which is a kind
	of singular limit, the energy is given by
	$E(f)= - N(f)$ with $N$ defined in
	\eqref{eq:nonlinearity abbreviation}. In this case,
	any strong solution of the dispersion management equation \eqref{eq:main}, or of \eqref{eq: Duhamel u}, satisfies
	energy conservation. This is easy to see: In this case
	$\dot{u}(t)=\partial_tu(t)= i\calQ(u(t))\in L^2$ and a
	calculation leading to
	\eqref{eq: time derivative N}, but now simpler, shows
	\begin{align*}
		\partial_t N(u(t))
			= \re\la \dot{u}(t), \calQ(u(t))\ra
			= \im \la \calQ(u(t)), \calQ(u(t))\ra
			=0 \, .
	\end{align*}

\end{remark}

\section{Global existence } \label{sec: global existence}

In this section, we finish the proof of global well--posedness of the
dispersion managed
NLS \eqref{eq:main}.
We only have to show that solutions exist globally, since the
local existence and uniqueness results tother with the
continuous dependence on the data proved in Section
\ref{sec: local well--posedness} apply to all times for which
the solution exists.
For $\dav=0$, assume that $h$ satisfies assumption \ref{ass: dav zero} and $\psi\in L^{1} (\R)\cap L^{\f{4}{4-p}} (\R) $, then
it follows from the mass conservation  that local solutions
for \eqref{eq: Duhamel u} obtained in Proposition
\ref{prop:local existence in L^2} are bounded in $L^2(\R)$.
Hence the blow--up alternative from Corollary
\ref{cor:maximal existence in L^2} shows that the solution is
global in $t$. This finishes the proof of Theorem
\ref{thm:well-posedness in L^2}.

\begin{proposition}\label{prop:global existence}
 Let $\dav\not=0$,  $h$ satisfy assumptions
 \ref{ass: dav not zero}, \ref{ass: dav not zero global},
and $\psi\in L^1(\R)\cap L^{\frac{4}{4-p}}(\R)$.
 Then  the Cauchy problem \eqref{eq:main} has a unique global strong solution $u$ in
  $\calC(\R, H^1)\cap\calC^1(\R, H^{-1})$  for any initial datum $u_0\in H^1(\R)$.
\end{proposition}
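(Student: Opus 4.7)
The plan is to combine the local well--posedness results of Sections~\ref{sec: local existence  for DMNLS} and \ref{sec: local well--posedness} with the conservation laws of Section \ref{sec: energy conservation} to rule out finite--time blow--up of the $H^1$ norm, and then invoke the blow--up alternative to deduce that $T_\pm = \infty$. Concretely, by Proposition \ref{prop:local existence in H^1} and Corollary \ref{cor:maximal existence in H^1}, for every $u_0 \in H^1(\R)$ there exist maximal life times $T_\pm \in (0,\infty]$ and a unique local strong solution $u \in \calC((-T_-,T_+), H^1)$ of \eqref{eq: Duhamel u}; moreover if $T_+ < \infty$ then $\|u(t)\|_{H^1} \to \infty$ as $t \to T_+$. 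Mass and energy conservation from Propositions \ref{prop: mass conservation} and \ref{prop: energy conservation} give $\|u(t)\| = \|u_0\|$ and $E(u(t)) = E(u_0)$ throughout $(-T_-, T_+)$, and Proposition \ref{prop:continuous dependence on H^1} will then yield the continuous dependence needed for well--posedness, extended to arbitrary intervals by iteration along a uniform a priori $H^1$ bound.

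The first main step is to derive a good bound on the potential term $N(u(t)) = \iint V(|T_r u(t)|)\,dx\,\psi(r)dr$. Integrating the growth assumption \ref{ass: dav not zero global} gives $|V(a)| \le \wti{J}(a)\big(\tfrac{a^2}{2} + \tfrac{a^{p+2}}{p+2}\big)$ (with the appropriate sign when $\dav < 0$). Since $\wti{J}$ is increasing and $\|T_r u\|_{L^\infty}^2 \le \|u'\|\|u\|$ by \eqref{ineq:L infinity bound}, pulling $\wti{J}$ outside yields
\begin{equation*}
    |N(u(t))| \;\le\; \wti{J}\!\bigl((\|u'(t)\|\|u(t)\|)^{1/2}\bigr) \cdot \Big[\tfrac{1}{2} \iint |T_r u|^2 dx\,\psi dr + \tfrac{1}{p+2} \iint |T_r u|^{p+2} dx\,\psi dr\Big].
\end{equation*}
The first bracketed integral equals $\tfrac{1}{2}\|u\|^2 \|\psi\|_{L^1}$ by unitarity of $T_r$, while applying Lemma \ref{lem:H^1-boundedness} to the second integral with $q = p+2$ and $\kappa = 0$ (which requires exactly $\psi \in L^{4/(4-p)}$, as assumed) gives $\iint |T_r u|^{p+2} dx\,\psi dr \lesssim \|u\|^{p+2}$, with no gradient factor. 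Substituting the conserved mass $\|u(t)\| = \|u_0\|$, we obtain a constant $C = C(\|u_0\|, \psi)$ such that
\begin{equation*}
    |N(u(t))| \;\le\; C\, \wti{J}\!\bigl(C\|u'(t)\|^{1/2}\bigr).
\end{equation*}

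The second main step is to exploit the growth condition \eqref{eq: growth condition wtiJ}. Setting $a = C\|u'(t)\|^{1/2}$, one has $a^4 = C^4 \|u'(t)\|^2$, and $\wti{J}(a)/a^4 \to 0$ implies that for any $\veps > 0$ there exists $C_\veps$ with $\wti{J}(a) \le \veps a^4 + C_\veps$ for all $a \ge 0$. Therefore $|N(u(t))| \le \veps' \|u'(t)\|^2 + C_{\veps'}$ with $\veps'$ arbitrarily small and $C_{\veps'}$ depending only on $\veps'$ and $\|u_0\|$. Energy conservation $E(u_0) = \tfrac{\dav}{2}\|u'(t)\|^2 - N(u(t))$ then gives
\begin{equation*}
    \tfrac{|\dav|}{2}\|u'(t)\|^2 \;\le\; |E(u_0)| + |N(u(t))| \;\le\; |E(u_0)| + \veps'\|u'(t)\|^2 + C_{\veps'}
\end{equation*}
(using the appropriate sign of $\dav$ on either side), and choosing $\veps' < |\dav|/2$ one absorbs the kinetic term and obtains an a priori bound $\|u'(t)\| \le K(u_0)$ uniformly in $t \in (-T_-, T_+)$. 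Combined with $\|u(t)\| = \|u_0\|$, this gives $\|u(t)\|_{H^1} \le K'(u_0)$ for all $t$, contradicting the blow--up alternative unless $T_\pm = \infty$. Uniqueness and continuous dependence on the initial datum on arbitrary compact intervals then follow from Proposition \ref{prop:continuous dependence on H^1} together with this uniform $H^1$ bound via a standard iteration on short time steps.

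The main obstacle is the second step above: the interplay between the mass--preserving estimate for the nonlinear term and the decay of $\wti{J}$ is delicate, and it is important to realise that Lemma \ref{lem:H^1-boundedness} admits the choice $\kappa = 0$, so that no gradient factor at all appears in the estimate of $\iint |T_r u|^{p+2}$; this is precisely what allows the $o(\|u'\|^2)$ bound on $|N(u)|$ that makes the absorption work even in the borderline case $p = 4$.
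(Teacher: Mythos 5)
Your proposal is correct and takes essentially the same route as the paper: local existence plus the blow--up alternative, mass and energy conservation, the bound $N(u)\lesssim \wti{J}\bigl((\|u'\|\|u_0\|)^{1/2}\bigr)\bigl(\|u_0\|^2+\|u_0\|^{p+2}\bigr)$ via the Strichartz estimate of Lemma \ref{lem:L^2boundedness} (your $\kappa=0$ case of Lemma \ref{lem:H^1-boundedness} is the same bound), and the growth condition \eqref{eq: growth condition wtiJ} to prevent the kinetic energy from blowing up. The only cosmetic differences are that you make the absorption explicit by writing $\wti{J}(a)\le\veps a^4+C_\veps$ where the paper instead factors out $\|\partial_x u(t)\|^2$ and lets the correction term tend to zero, and that you write $|N(u)|$ where assumption \ref{ass: dav not zero global} only yields the one--sided bound on $N$ that (as your parenthetical sign remarks indicate) the argument actually uses.
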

\begin{remark}
	In particular, for negative average dispersion we have a global existence result
	for nonlinearities for which $h$ satisfies assumption \ref{ass: dav not zero} and is bounded from below.
	In applications, the polarization $P(a)=h(a)a,\ a \ge 0,$ is usually
	non--negative, so the requirement that $h$ is bounded from below
	is a rather weak additional condition on the nonlinearity.
	
	If $h$ fulfills a growth condition of the form
	\begin{align}
		|h(a)|\lesssim 1+ a^\beta \quad \text{for } a>0 \notag
	\end{align}
	then it is easy to see that the condition \ref{ass: dav not zero global}
	is fulfilled when
	$0\le \beta< 8$. Other growth conditions such as
	\begin{align}
		|h(a)|\lesssim 1+ a^8(\ln(2+a))^{-1} \quad \text{for } a>0\, \notag
	\end{align}
 	also yield global existence.
\end{remark}

\begin{proof} Since the mass is conserved by Proposition \ref{prop: mass conservation} it is enough to bound $\|\partial_x u(t)\|$ in order to control the $H^1$ norm of the solution $u$.
In order to be able to use energy conservation in Proposition \ref{prop: energy conservation}, we need to control the nonlinearity in a first step.

Recall that the nonlocal nonlinearity is given by
$$
N(f)=\iint_{\R^2} V(|T_r f|)dx \psi(r) dr,
$$
where $V(a)= \int_0^aP(s)\, \mathrm{d}s= \int_0^a h(s)s\, \mathrm{d}s$ for all $a>0$.
Assume $h(a)\le \wti{J}(a)(1+a^p)$, then
\begin{align*}
	V(a) \lesssim   \wti{J}(a)(a^2+a^{p+2}) \text{ for all } a\ge 0,
\end{align*}
and
\begin{align*}
	N(f) \lesssim
			\wti{J}\big((\|f'\|\|f\|)^{1/2}\big)
			\iint_{\R^2} (|T_rf|^2 + |T_rf|^{p+2})\, dx \psi(r)dr
\end{align*}
since $\wti{J}$ is increasing and $|T_rf|\le (\|\partial_x T_rf\|\|T_rf\|)^{1/2}= (\|\partial_x f\|\|f\|)^{1/2}$.
Also, since $T_r$ is unitary on $L^2(\R)$ we have  $\iint_{\R^2} |T_rf|^2 \, dx \psi(r)dr = \|f\|^2 \|\psi\|_{L^1}$.
For the last term we use  Lemma \ref{lem:L^2boundedness} to obtain
\begin{align*}
	\iint_{\R^2} |T_rf|^{p+2}\, dx \psi(r)dr
	\lesssim
		\|f\|^{p+2}  \|\psi\|_{L^{4/(4-p)}} \, .
\end{align*}
Thus
\begin{equation}\label{eq: N upper bound}
		N(f) \lesssim \wti{J}\big((\|f'\|\|f\|)^{1/2}\big) \big(  \|f\|^2 + \|f\|^{p+2}\big)\, .
\end{equation}
In case that $h(a)\ge  -\wti{J}(a)(1+a^p)$, we get similarly
\begin{equation}\label{eq: N lower bound}
		N(f) \gtrsim -\wti{J}\big((\|f'\|\|f\|)^{1/2}\big) \big(  \|f\|^2 + \|f\|^{p+2}\big)\, .
\end{equation}

Let $\dav \neq 0$, then Corollary \ref{cor:maximal existence in H^1}
tells us that there exist $T_\pm>0$ depending only on $\|u_0\|_{H^1}$ and
the $L^1$ norm of $\psi$ such that a  unique solution $u$ for
\eqref{eq: Duhamel u} exists in $\calC((-T_-,T_+),H^1)$ with initial data $u_0\in H^1(\R)$.
Moreover, if $T_+<\infty$, the $H^1$ norm of the solution must blow up as $t\to T_+$ and similarly for $T_-$.

The energy conservation \eqref{energy conservation} shows
\begin{equation}\label{eq: energy conservation made work}
  \begin{split}
 	\|\partial _x u(t)\|^2
 		&=  \f{2E(u_0)}{\dav} + \f{2N(u(t))}{\dav} \\
 		&\le \f{2E(u_0)}{\dav} + C\frac{\wti{J}\big((\|\partial_x u(t)\|\|u_0\|)^{1/2}\big) \big(  \|u_0\|^2 + \|u_0\|^{p+2}\big)}{|\dav|}
  \end{split}
\end{equation}
for some finite positive constant $C$, due to \eqref{eq: N upper bound} when $\dav>0$, respectively \eqref{eq: N lower bound} when $\dav<0$, and
$\|u(t)\|= \|u_0\|$ by conservation of mass \eqref{mass conservation}.

The bound  \eqref{eq: energy conservation made work} is clearly equivalent to
\begin{equation*}
	\|\partial _x u(t)\|^2 \left( 1- C \frac{\wti{J}\big((\|\partial_x u(t)\|\|u_0\|)^{1/2}\big)}{\|\partial _x u(t)\|^2} \right)
	\lesssim 1
\end{equation*}
for all $t\in (-T_-, T_+)$ for some maybe different constant $C$.
Due to the growth condition \eqref{eq: growth condition wtiJ}
on $\wti{J}$ this shows that
$\|\partial _x u(t)\|$ cannot blow up as $t\to T_+$ or $t\to -T_-$.
By mass conservation, this shows that the $H^1$ norm of the solution
does not blow up.
Hence the blow--up alternative from Corollary \ref{cor:maximal existence in H^1} implies that the solution exists globally.
\end{proof}
\begin{remark}
	The above proof shows that under assumption \ref{ass: dav not zero global} we have for $\dav>0$
	\begin{equation}\label{eq:coercivity-1}
		E(f) \ge \frac{\dav}{2}\|f'\|^2 - C \wti{J}\big((\|f'\|\|f\|)^{1/2}\big) \big(  \|f\|^2 + \|f\|^{p+2}\big)\, . \notag
	\end{equation}
 Thus the energy is coercive: for any sequence $f_n\in H^1(\R)$ with
 $\|f_n\|$ bounded and $\|f_n'\|\to \infty$ as $n\to\infty$ one has
 \begin{equation}\label{eq:coercivity-2}
 	\lim_{n\to\infty} E(f_n)= \infty\, .
 \end{equation}
 This will be useful for the proof of orbital stability in
 Section \ref{sec:Stability result}.
\end{remark}

Now we come to the proof of Theorem \ref{thm: small data global well-posedness}, which for the convenience of the reader, we recall.

\begin{proposition}[$=$ Theorem \ref{thm: small data global well-posedness}]\label{prop: small data global existence}
	Let $\dav\not=0$ and $h$ satisfy assumption
	\ref{ass: dav not zero}.
	\begin{theoremlist}
		\item For any initial datum
			$u_0\in H^1(\R)$ with small enough $H^1$ norm,
			 the Cauchy problem \eqref{eq:main} is globally well--posed
			 when $\psi\in L^1(\R)$.
		\item If $J_1(a) \lesssim 1+a^8 $ for $a\ge 0$, then
			 the Cauchy problem \eqref{eq:main} is globally well--posed
			 for initial conditions
  			$u_0\in H^1(\R)$ with $\|u_0\|$ small enough when
  			$\psi\in L^\infty(\R)\cap L^1(\R)$.
		\item If $\lim_{a\to 0}J_1(a)/a^4=0$ then
			 the Cauchy problem \eqref{eq:main} is globally well--posed
			 for initial conditions
  			$u_0\in H^1(\R)$ with $\|u_0'\|$ small enough
  			{\rm(}depending on $\|u_0\|${\rm)} when  $\psi\in L^1(\R)$.
	\end{theoremlist}
\end{proposition}

\begin{proof}
  Since we proved local well--posedness in Sections
  \ref{sec: local existence  for DMNLS} and
  \ref{sec: local well--posedness}, we only have to prove
  global existence of solutions.

  Since $V(a)= \int_0^a h(s)s\, \mathrm{d}s\le J_1(a)a^2/2$
  we can argue similarly to the derivation of \eqref{eq: N upper bound}
  to see that
  \begin{equation}\label{eq:rough N bound}
  	|N(f)| \le \frac{1}{2} J_1\big((\|f'\|\|f\|)^{1/2}\big)
			\iint _{\R^2} |T_rf|^2 \, dx \psi(r)dr
			= \frac{1}{2} J_1\big((\|f'\|\|f\|)^{1/2}\big) \|f\|^2  \|\psi\|_{L^1}\, .
  \end{equation}
Thus energy and mass conservation again yields
\begin{align*}
	\left|\frac{2 E(u_0)}{\dav}\right| \ge \frac{2 E(u_0)}{\dav}
		&= \|\partial_x u(t)\|^2 - \frac{2 N(u(t))}{\dav}
	  		\geq \|\partial_x u(t)\|^2 - \frac{2 |N(u(t))|}{|\dav|} \\
		&\geq \|\partial_x u(t)\|^2 - |\dav|^{-1} J_1\big((\|\partial_x u(t)\|\|u_0\|)^{1/2}\big) \|u_0\|^2
		    \|\psi\|_{L^1}\, .
\end{align*}
Given $\alpha, s\ge 0$, let $G_\alpha(s)= s^2- |\dav|^{-1} \|\psi\|_{L^1}  J_1\big((\alpha s)^{1/2}\big) \alpha^2$. Then the above shows
\begin{equation}\label{eq:a-priori}
	G_{\|u_0\|}(\|\partial_x u(t)\|) \le \left|\frac{2 E(u_0)}{\dav}\right|
\end{equation}
for all $t$ for which the solution $u$ exists.

We will show shortly that the bound \eqref{eq:a-priori} forces
$\|\partial_x u(t)\|$ to stay bounded when the $H^1$ norm of the
initial condition $u_0$ is small enough. Together with the blow--up
alternative from Corollary \ref{cor:maximal existence in H^1} this
shows that the solution is global.

\smallskip
Note that $G_\alpha(s)$ is decreasing in $\alpha\ge 0$ for fixed $s\ge 0$.
In addition,  when $\alpha_0>0$ is small enough, we have
\begin{align}\label{eq:c-alpha0}
	 c_{\alpha_0}=\inf_{0\le \alpha\le \alpha_0}\sup_{s\ge 0} G_\alpha(s)>0 \, .
\end{align}
If $0<c<c_{\alpha_0}$ then there
exist $0<a<b$ such that
$G_{\alpha}(s) > c$ for all $a< s< b$ and all
$0\le \alpha\le \alpha_0$.
Thus $\alpha_0<0$ small enough, $0\le \alpha\le \alpha_0$,
and $G_\alpha(s)\le c$ implies
$0\le s\le a$ or $s\ge b$.
Hence if the initial condition $u_0$ is such that
$\|u_0\|\le \alpha_0$ and $2 E(u_0)/\dav\le c$, then
\eqref{eq:a-priori} implies
$ \|\partial_x u(t)\|\le a$ or $\|\partial_x u(t)\|\ge b$ for all
times for which the solution exists.

Due to \eqref{eq:rough N bound}, we can make $|2E(u_0)/\dav|\le c$
by choosing $\|u_0\|$ and $\|u_0'\|$ small enough. Making them
even smaller, if necessary, we can also assume that
$\|u_0\|\le \alpha_0$ and $\|u'(0)\|\le a$. Since either
$ \|\partial_x u(t)\|\le a$ or $\|\partial_x u(t)\|\ge b$ and
$t\mapsto \|\partial_x u(t)\|$ is continuous,
this shows that $ \|\partial_x u(t)\|\le a$ for all times for which
the solution exists. This finishes the proof of the first part of
the proposition.

For the second part, we note that using Lemma \ref{lem:H^1-boundedness} with $\kappa=4$ and $q=10$ we have
\begin{equation*}
  \begin{split}
	|N(f)|&\lesssim \iint _{\R^2}\left( \|T_rf\|^2 + \|T_r f\|^{10} \right)\, dx \psi(r)dr \lesssim \|f\|^2 + \|f'\|^2 \|f\|^8\, .
  \end{split}
\end{equation*}
Together with energy and mass conservation this implies
\begin{equation}
		 	\|\partial _x u(t)\|^2\left(1-C \|u_0\|^8\right)
 		\le \left|\f{2E(u_0)}{\dav}\right| + C\|u_0\|^2 \,  \notag
 \end{equation}
 similarly as for \eqref{eq:a-priori}.
 Hence, as soon as $\|u_0\|$ is small enough,  the kinetic energy
 $\|\partial _x u(t)\|$ stays bounded,
 so the  blow--up alternative from Corollary
 \ref{cor:maximal existence in H^1} applies again.

 For the proof of the third part, we note that
 the assumption
 $\lim_{a\to 0}J_1(a)/a^4=0$ implies that the constant
 $c_{\alpha_0}$ given by \eqref{eq:c-alpha0} is
 now positive for all $\alpha_0>0$. Moreover,
 for fixed $L^2$ norm of the initial condition $u_0$
 the bound \eqref{eq:rough N bound} together with
 $\lim_{a\to 0}J_1(a)=0$  shows that
 we can make its energy $|E(u_0)|$ as
 small as we like by having $\|u_0'\|$ small.

 Thus, with some small and  straightforward  modifications, the
 proof of the first case now
 shows that $\|\partial_xu(t)\|$ stays bounded for all times for
 which the solution $u$ exists, as long as $\|u_0'\|$ is small enough,
 depending only on how large $\|u_0\|$ is.
 Together with energy and mass conservation the blow--up alternative
 from Corollary \ref{cor:maximal existence in H^1} again shows that
 the solution $u$ is global.
\end{proof}

Once one knows global esistence, a natural next step is a more detailed
investigation of the long time behavior	of the solutions, such as
scattering or the stability of solitary solutions under small
perturbations. Scattering results are easier to
prove in higher
dimensions, since the wave has more directions at its disposal in
order to move to infinity. It can disperse more easily in higher
dimensions than in one dimension, where it can only move to the left or
right and the dispersive effects are the weakest.
Modified scattering
for one dimensional dispersion managed NLS {\blue with a cubic nonlinearity}
was shown for small,
well--localized initial data in \cite{MH1}, see the review paper,
\cite{Murphy-review}, for NLS in one dimension.
In the next section we show orbital stability for the focussing
case, which in our notation means $\dav > 0$, and also vanishing
average dispersion $\dav=0$,  including saturating nonlinearities
for both cases.

\section{Orbital stability for (non--)saturated nonlinearities} \label{sec:Stability result}
In this section, we prove Theorem \ref{thm:Stability result}.
We consider only non--negative average dispersion, $\dav\ge 0$, since the set of
ground states is ill-defined when $\dav<0$.
Recall the set of ground states
$$
S_\lambda^{\dav}=\{f\in X: E(f)=E_{\lambda}^{\dav}, \|f\|^2=\lambda \}
$$
for each $\lambda>0$ and $\dav\geq 0$ and
$$
E_\lambda ^\dav =\inf \{E(f)= \f{\dav}{2} \|f'\|^2 -N(f) : f\in X,  \|f\|^2=\lambda \}.
$$
Here, $X= H^1(\R)$ for $\dav>0$ and $X=L^2(\R)$ for $\dav=0$.\\
Recall that the nonlocal nonlinearity functional is given by
$$
N(f)=\iint_{\R^2} V(|T_rf|) dx \psi(r)dr,
$$
where $V(a) = \int_0^a P(s)\, \mathrm{d}s$, for $a\geq 0$, the antiderivative of $P$,
and the nonlinearity  $ P$ is given by $P(z)= h(|z|)z$ for $z\in \C$.

\smallskip
Recall also the additional assumptions for the orbital stability of $S_\lambda^{\dav}$:

\vspace{2pt}
\begin{assumptions}
\setcounter{assumptions}{\value{saveassumptions}}
\item  There exists $p_0>0$ with
	\begin{equation}\label{eq:strong A-R-again}
		h(a)a^2 \ge p_0 \int_0^a h(s)s\, \mathrm{d}s \text{  for all } a>0\, ,
	\end{equation}
\item
	There exists a continuous decreasing function
	$p:[0,\infty)\to (2,\infty)$ such that
		\begin{equation}\label{eq:saturating A-R-again}			
			h(a)a^2 \ge p(a) \int_0^a h(s)s\, \mathrm{d}s\text{ for all } a>0.
		\end{equation}
\item There exists $a_0>0$ with
	$\int_0^{a_0} h(s)s\, \mathrm{d}s>0$.
\end{assumptions}

Our first result concerns the question whether $S^\dav_\lambda$ is empty or not.
\begin{theorem}[Existence of thresholds for $S^{\dav}_\lambda\neq \emptyset$] \label{thm:existence}
   Suppose that the nonlinearity $h$ satisfies assumption
   \ref{ass:A6}   and
  either of the following:
 \begin{theoremlist}
	\item \textbf{Zero average dispersion, non--saturated nonlinearity:}
		The nonlinearity $h$ satisfies assumption
		\ref{ass:A4} and the bound
		$|h(a)|\lesssim a^{p_1}+ a^{p_2}$
		for some $0< p_1\le  p_2<4$.
		The density
		$\psi\in L^{\frac{4}{4-p_2}+}(\R)$ has compact support.
	\item \textbf{Zero average dispersion, saturated nonlinearity:}
		The nonlinearity $h$ satisfies assumption
		\ref{ass:A5} and the bound
		$|h(a)|\lesssim a^{p_1}+ a^{p_2}$
		for some $1\le  p_1\le  p_2< 3$.
		The density
		$\psi\in L^{\frac{4}{3-p_2}+}(\R)$ has compact support.
	\item    \textbf{Positive average dispersion, saturated and non--saturated nonlinearities:}
		The nonlinearity $h$ satisfies the bound
		$|h(a)|\lesssim a^{p_1}+ a^{p_2}$ for some
		$0<p_1\le p_2<8$.
		The density  $\psi\in L^{\frac{4}{8-p_2}+}(\R)$
	    has compact support.
	    Moreover, $h$ satisfies either assumption
		\ref{ass:A4} or assumption \ref{ass:A5}.
  \end{theoremlist}
  Then there exists a critical threshold $0\le \lambda_{\text{cr}}<\infty$ such that
  if $\lambda >  \lambda_{\text{cr}}^{\dav}$ then $E^\dav_\lambda<0$ and the set $S_\lambda^{\dav}$ is
  not empty. Moreover, if there exists $\veps>0$ such that $h(a)>0$ for $0<a\le \veps$ when $\dav=0$ or that  $h(a)\gtrsim a^{q} $ for $0<a\le \veps$ and some $0<q<4$ when $\dav>0$, then  $\lambda_{\text{cr}}^{\dav}=0$.

  If $\lambda_{\text{cr}}^{\dav}>0$ and $\dav>0$, then $S_\lambda^{\dav}=\emptyset$ for all $0<\lambda<\lambda_{\text{cr}}^{\dav}$.
\end{theorem}
\begin{proof}
  These results can be found for non--saturating nonlinearities in
  \cite{ChoiHuLee2016} and for saturating nonlinearities
  in \cite{HLRZ}. If the average dispersion is zero and the
  nonlinearity is saturating, the condition $1\le p< 3$ is
  needed to guarantee that $S^0_\lambda$  is not empty, at least
  for large enough $\lambda$.
\end{proof}
\begin{remarks}\label{rem:nonempty set of ground states}
  \begin{theoremlist}
    \item The requirement that $\psi$ has compact support is very natural from the point of view of applications for dispersion managed NLS, see Section \ref{sec:connection}.
    \item \label{rem:existence-ground-states}
 The condition on $V(a)=\int_0^a h(s)s\, \mathrm{d}s$ used in \cite{ChoiHuLee2016, HLRZ}
 is $|V'(a)|\lesssim a^{\gamma_1-1} + a^{\gamma_2-1}$ for some
 $2\le \gamma_1\le \gamma_2<10$ for $\dav>0$, and $2< \gamma_1\le \gamma_2<6$
 for $\dav=0$ in \cite{ChoiHuLee2016}.
 The conditions in Theorem \ref{thm:existence} are a bit more general than the assumptions used in \cite{ChoiHuLee2016,HLRZ}.
 However, the proofs carry over to our more general situation:
 The main tools for the proofs in \cite{ChoiHuLee2016,HLRZ}
 that $S^\dav_\lambda$ is not empty are tightness results,
 modulo translations, for
 energy minimizing sequences, see
 \cite[Proposition 4.4 and 4.6]{ChoiHuLee2016}.
 These bounds follow from the strict
 subadditivity of the energy and the splitting bounds for the nonlocal
 nonlinearity in \cite[Section 2.2]{ChoiHuLee2016}.
 This strict subadditivity is shown in \cite{ChoiHuLee2016} under assumption \ref{ass:A4} and in \cite{HLRZ} under assumption \ref{ass:A5}.
 The splitting bounds relied on the pointwise bounds for $V$ from
 \cite[Lemma 2.14]{ChoiHuLee2016}.
 Under the conditions of Theorem \ref{thm:existence} suitable
 replacements, which are sufficient for us, still holds.
 For example, we have
 \begin{equation}\label{eq:splitting V}
 	\left| V(|z+w|) - V(|z|) - V(|w|) \right|
 	\le 4 J_1\big(|z|+|w|\big) | z | |w|
 \end{equation}
 for all $z,w\in\C$, which is a suitable replacement for the bound in equation (2.17) from \cite[Lemma 2.14]{ChoiHuLee2016}. To prove \eqref{eq:splitting V} just argue as in the
 proof of  \cite[Lemma 2.14]{ChoiHuLee2016} using now
\beq
 	\left|V(|z+w|) -V(|z|) \right| =\Big| \int_{|z|}^{|z+w|} h(s)s\, \mathrm{d}s  \Big|
 		\le J_1\big(|z|+|w|\big)\big(|z|+|w|\big) |w| \, . \notag
 \eeq
  \end{theoremlist}
\end{remarks}
To prove Theorem \ref{thm:Stability result}, we need one more result
which is the continuity of the nonlinear functional $N$ similar to \cite[Lemma 4.7]{ChoiHuLee2016}.

\begin{lemma} \label{lem:continuity}
\itemthm  Assume that $h$ satisfies $|h(a)|\lesssim 1+a^p$ for all $a\ge 0$
	and some $0\le p\le 4$ and $\psi \geq 0$ in $ L^1(\R)\cap L^{\frac{4}{4-p}}(\R)$. Then
	the nonlinear nonlocal functional $N: L^2(\R)\to \R$ given by
\begin{align*}
	L^2(\R)\ni f\mapsto N(f)=
	\iint_{\R^2} V(|T_r f|)dx\psi(r) dr
\end{align*}
 is locally Lipshitz continuous on $L^2(\R)$ in the sense that
 \begin{align}\label{ineq:lipschitzL^2}
   	|N(f_1)-N(f_2)| \lesssim \left(\|f_1\|+\|f_2\|+ \|f_1\|^{p+1} + \|f_2\|^{p+1}\right)
   	\|f_1-f_2\|,
 \end{align}
where the implicit constant depends only on $p$ and the $L^1$, $L^{\f{4}{4-p}}$ norms of $\psi$. \\
 \itemthm Assume that $h$ satisfies $|h(a)|\le J_1(a)$ for all $a\ge 0$ and some increasing function $J_1\ge0$ and $\psi\geq0$ in $L^1(\R)$. Then the nonlinear nonlocal functional
 	$N: H^1(\R)\to \R$ given by
\begin{align*}
H^1(\R)\ni f\mapsto N(f)=
\iint_{\R^2} V(|T_r f|)dx\psi(r) dr
\end{align*}is locally Lipschitz continuous in the sense that
 \begin{align} \label{ineq:lipschitzH^1}
 |N(f_1)-N(f_2)| \lesssim J_1\big(\|f_1\|_{H^1}\vee\|f_2\|_{H^1}\big)(\|f_1\|+\|f_2\|)
 \|f_1-f_2\|,
 \end{align}
 where the implicit constant depends only on the $L^{1}$ norm of $\psi$. \\
\end{lemma}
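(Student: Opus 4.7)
The plan is to reduce both statements to a pointwise estimate on the scalar antiderivative $V$ and then integrate using the same Strichartz/H\"older machinery that drove Lemma \ref{lem:L^2 bound} (for part (i)) and the Sobolev bound $\|T_r f\|_{L^\infty}\le \|f\|_{H^1}$ (for part (ii)). The scalar fact I need is that for $a,b\ge 0$,
\[
|V(a)-V(b)| = \Big|\int_b^a h(s)s\, ds\Big| \le \|h(s)s\|_{L^\infty([\min(a,b),\max(a,b)])}\,|a-b|,
\]
and moreover $\big||T_rf_1|-|T_rf_2|\big|\le |T_rf_1-T_rf_2|=|T_r(f_1-f_2)|$. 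Together these give
\[
\big|V(|T_rf_1|)-V(|T_rf_2|)\big| \le \Phi(|T_rf_1|,|T_rf_2|)\,|T_r(f_1-f_2)|,
\]
where $\Phi$ encapsulates the relevant growth of $h(s)s$, and I just need to integrate this in $dx\,\psi(r)dr$.

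For part (i), the hypothesis $|h(a)|\lesssim 1+a^p$ gives $|h(s)s|\lesssim s+s^{p+1}$, so I may take $\Phi(a,b)\lesssim (a+b)+(a^{p+1}+b^{p+1})$. The linear contribution is controlled by the Cauchy--Schwarz inequality in $x$ and unitarity of $T_r$ on $L^2$:
\[
\iint_{\R^2}|T_rf_j|\,|T_r(f_1-f_2)|\,dx\,\psi(r)dr \le \|f_j\|\,\|f_1-f_2\|\,\|\psi\|_{L^1}.
\]
For the term with $|T_rf_j|^{p+1}|T_r(f_1-f_2)|$, I apply H\"older in $x$ with exponents $\alpha,\tfrac{2\alpha}{\alpha-2}$ (choosing $\alpha>2$ with $\alpha p\ge 2$) and then a triple H\"older in $r$ with exponents $\tfrac{4\alpha p}{\alpha p-2}$, $2\alpha$, $\tfrac{4}{4-p}$, followed by the Strichartz estimate from Lemma \ref{lem:Strichartz}, exactly as in the derivation of \eqref{ineq:lipschitz Q}. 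This yields a bound of the form $\|f_j\|^{p+1}\|f_1-f_2\|\|\psi\|_{L^{4/(4-p)}}$, which combined with the linear bound gives \eqref{ineq:lipschitzL^2}. The case $p=0$ is slightly degenerate and handled directly as in Lemma \ref{lem:L^2 bound}.

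For part (ii), the hypothesis $|h(a)|\le J_1(a)$ together with monotonicity of $J_1$ gives
\[
\|h(s)s\|_{L^\infty([\min(a,b),\max(a,b)])}\le J_1(\max(a,b))\max(a,b),
\]
so that
\[
\big|V(|T_rf_1|)-V(|T_rf_2|)\big|\le J_1\!\big(\|T_rf_1\|_{L^\infty}\vee\|T_rf_2\|_{L^\infty}\big)\big(|T_rf_1|+|T_rf_2|\big)\,|T_r(f_1-f_2)|.
\]
Using $\|T_rf_j\|_{L^\infty}\le \|T_rf_j\|_{H^1}=\|f_j\|_{H^1}$ (as already invoked in \eqref{ineq:L infinity bound}) and monotonicity of $J_1$, the prefactor bounds out to $J_1(\|f_1\|_{H^1}\vee\|f_2\|_{H^1})$. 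Integrating the remaining factor via Cauchy--Schwarz in $x$ and unitarity of $T_r$ on $L^2$,
\[
\iint_{\R^2}\big(|T_rf_1|+|T_rf_2|\big)|T_r(f_1-f_2)|\,dx\,\psi(r)dr \le \big(\|f_1\|+\|f_2\|\big)\|f_1-f_2\|\,\|\psi\|_{L^1},
\]
produces \eqref{ineq:lipschitzH^1}. No genuine obstacle is expected; the only mild bookkeeping is in part (i), where the correct choice of H\"older exponents must be compatible with both the allowed range $0\le p\le 4$ and the given integrability $\psi\in L^{4/(4-p)}$, which is why one selects $\alpha>2$ with $\alpha p\ge 2$ exactly as in the proof of Lemma \ref{lem:L^2 bound}.
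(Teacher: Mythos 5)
Your overall strategy — a pointwise fundamental-theorem-of-calculus bound on $V(|z|)-V(|w|)$ followed by integration in $dx\,\psi(r)dr$ — is exactly the paper's, and your part (ii) as well as the linear term in part (i) reproduce the paper's argument verbatim (Cauchy--Schwarz in $x$, unitarity of $T_r$, monotonicity of $J_1$, and $\|T_rf\|_{L^\infty}\le\|f\|_{H^1}$). The one place where your write-up would not survive scrutiny is the $(p+1)$-power term in part (i). You import the exponent scheme from the derivation of \eqref{ineq:lipschitz Q}, but that scheme was built to estimate $\| \,|T_rf|^{p}\,T_r(f-g)\|_{L^2_x}$: the H\"older exponents $\alpha$ and $\f{2\alpha}{\alpha-2}$ satisfy $\f{1}{\alpha}+\f{\alpha-2}{2\alpha}=\f12$, which is the right bookkeeping for an $L^2_x$ norm of a product. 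Here the quantity to control is the $L^1_x$ integral $\int_\R |T_rf_j|^{p+1}|T_r(f_1-f_2)|\,dx$, for which H\"older requires exponents summing to $1$, so the step as stated is not a valid application of H\"older's inequality; moreover, following \eqref{ineq:lipschitz Q} literally would produce the power $\|f_j\|^{p}$ rather than the needed $\|f_j\|^{p+1}$, and the subsequent triple H\"older exponents in $r$ would no longer match.

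The repair is immediate and is what the paper does: apply Cauchy--Schwarz in $x$ to get $\|T_rf_j\|_{L^{2(p+1)}}^{p+1}\,\|f_1-f_2\|$, then H\"older in $r$ with exponents $\f{4}{p}$ and $\f{4}{4-p}$, and finally the Strichartz estimate for the admissible pair $\bigl(2(p+1),\f{4(p+1)}{p}\bigr)$, which yields $\lesssim \|f_j\|^{p+1}\|f_1-f_2\|\,\|\psi\|_{L^{4/(4-p)}}$ as you claim. With that substitution (and the separate treatment of the endpoint $p=0$, which you already note), your proof coincides with the paper's.
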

\begin{remark}
	Note that the second part of Lemma \ref{lem:continuity} shows a
	somewhat surprising result: while the Lipschitz constant of $N$
	on $H^1(\R)$ clearly depends on the $H^1$ norm,  the difference
	$N(f_1)-N(f_2)$ is small whenever $f_1$ is close to $f_2$ in the
	weaker $L^2$ norm as soon as  $f_1 $ and $f_2$
	are bounded in $H^1(\R)$.
\end{remark}
\begin{proof}
 Recall the notation
  $a\vee b=\max(a,b)$, we also use $a\wedge b=\min(a,b)$ in the following.
 To  prove the first part recall
  $V(a)=\int_0^a P(s)\, \mathrm{d}s= \int_0^a h(s)s\, \mathrm{d}s$. Thus
  \begin{align*}
  	\big| V(|z|) - V(|w|) \big| &\le
  		\left| \int_{|w|}^{|z|} h(s)s\, \mathrm{d}s \right|
  		\lesssim  \int_{|z|\wedge|w|}^{|z|\vee|w|} (1+s^p)s\, \mathrm{d}s \\
  		&\lesssim \big( |z|+|w|+|z|^{p+1}+ |w|^{p+1} \big) |z-w|
  \end{align*}
  for all $z,w\in\C$. Thus,
  \begin{align} \label{eq:N Lipshitz-1}
  	\big| N(f_1) - N(f_2) \big|
  		&\le \iint_{\R^2} \big| V(|T_r f_1|)- V(|T_rf_2|)|dx\psi(r) dr \nonumber\\
		&\lesssim \iint_{\R^2} (|T_r f_1|+|T_rf_2|+|T_rf_1|^{p+1}+|T_rf_2|^{p+1})|T_r(f_1-f_2)|dx\psi(r)dr.
  \end{align}
  Using the Cauchy--Schwartz inequality in the $x$-integral and $T_r$ being unitary on $L^2$ for all $r\in \R$ one has
  \begin{align*}
  	\iint_{\R^2}  (|T_rf_1| & + |T_rf_2|)|T_r(f_1- f_2)|\, dx \psi(r)dr\\
	&\leq \int_\R(\|T_rf_1\|+\|T_rf_2\|)\|T_r(f_1-f_2)\|\psi(r)dr\\
  		&=(\|f_1\|+\|f_2\|) \|f_1- f_2\| \|\psi\|_{L^1}.
  \end{align*}
  Similarly, using the Cauchy--Schwarz inequality in the $x$-integral,
  then H\"older's inequality in the $r$-integral with exponents $\f{4}{p}$ and $\f{4}{4-p}$, and the Strichartz
  inequality with admissible pair $(2(p+1),\f{4(p+1)}{p})$, one has
  \begin{align*}
  	\iint_{\R^2} (|T_rf_1|^{p+1} & +  |T_rf_2|^{p+1})  |T_r(f_1-f_2)|\, dx \psi(r)dr \\
  		&\lesssim (\|f_1\|^{p+1} + \|f_2\|^{p+1}) \|f_1-f_2\|\|\psi\|_{L^{\frac{4}{4-p}}}.
  \end{align*}
  Substituting the last two bounds in \eqref{eq:N Lipshitz-1} proves \eqref{ineq:lipschitzL^2}.
 \smallskip

 To prove the second part note that now
 \begin{align*}
  	\big| V(|z|) - V(|w|) \big| &\le
  		\left| \int_{|w|}^{|z|} h(s)s\, \mathrm{d}s \right|
  		\le  \int_{|z|\wedge |w|}^{|z|\vee|w|} J_1(s)s\, \mathrm{d}s \\
  		&\le J_1(|z|\vee|w|) (|z|+|w|) |z-w|
 \end{align*}
  and using this in \eqref{eq:N Lipshitz-1} together with the Cauchy--Schwartz inequality  yields
  \begin{align*}
  	\big| N(f_1) &- N(f_2) \big| \\
  		&\lesssim \sup_{r\in\R} J_1(\|T_rf_1\|_{L^\infty}\vee \|T_rf_2\|_{L^\infty}) \iint_{\R^2}  (|T_rf_1|+|T_rf_2|)|T_rf_1- T_rf_2|\, dx \psi(r)dr \\
  		&\le  J_1(\|f_1\|_{H^1}\vee \|T_rf_2\|_{H^1})
  			(\|f_1\|+\|f_2\|)\|f_1-f_2\|\|\psi\|_{L^1},
  \end{align*}
which proves  \eqref{ineq:lipschitzH^1}.
\end{proof}

\bpf [Proof of Theorem \ref{thm:Stability result}]
We show the stability of the set of ground states adapting a proof
from \cite{HKS}, see also \cite{CL}.
We will first prove the positive average dispersion case.
%

Arguing by contradiction, assume that $S_\lambda^\dav$ is not stable. Then there exist $\veps_0>0$, a sequence $(\phi_n)_n$ in $H^1(\R)$ with
\beq  \label{not stable 0}
   d(\phi_n, S^\dav_\lambda)\coloneqq
 	\inf_{f\in S_\lambda^\dav}\|\phi_n -f\|_{H^1} < \frac{1}{n}
 	\quad\text{for } n\in\N\, ,
\eeq
and a sequence $(t_n)_n$ of times  such that
\beq \label{not stable}
  d(u_n(\cdot, t_n ), S^\dav_\lambda)\geq \veps_0
\eeq
for all $n$, where $u_n$ are solutions of \eqref{eq:main} with the initial data $\phi_n$.

We can then choose a sequence $(f_n)_n\subset S^\dav_\lambda$ such that
 $\|\phi_n -f_n\|_{H^1}< \frac{1}{n}$  for all $n\in\N$.
 Since $\|f_n\|^2=\lambda$ and $E(f_n)= E^\dav_\lambda$, the
 coercivity of the energy \eqref{eq:coercivity-2} shows that
 $\|f_n'\|$ is bounded, hence
 $(\phi_n)_n$ is a bounded sequence in $H^1(\R)$.
 In addition,
$$
\left|\|\phi_n\|-\lambda^{1/2}\right|=\bigl|\|\phi_n\|-\|f_n\|\bigr| \leq \|\phi_n-f_n\|_{H^1} \to 0 \quad \text{ as } n\to\infty\, ,
$$
so $\|\phi_n \|^2\to \lambda$ as $n\to \infty$. By mass conservation we also have $\|u_n(t)\|^2\to \lambda$ as $n\to\infty$ uniformly in $t\in\R$.

Moreover, $E(\phi_n)\to E_\lambda ^\dav$ as $n\to \infty$. Indeed, we have
\begin{align*}
  |E(\phi_n)-E_\lambda^\dav| =|E(\phi_n)-E(f_n)|
    	\leq \f{\dav}{2}\left|\|\phi_n'\|^2-\|f_n'\|	^2\right| +\left|N(\phi_n)-N(f_n)\right|.
\end{align*}
Using the reverse triangle inequality, we obtain
\begin{equation*}
	\left|\|\phi_n'\|^2-\|f_n'\|^2\right|
	= (\|\phi_n'\|+\|f_n'\|) \left|\|\phi_n'\|-\|f_n'\|\right|
	\le (\|\phi_n'\|_+\|f_n'\|) \|\phi_n'-f_n'\|
\end{equation*}
which together with
 $(\phi_n)_n$ and $(f_n)_n$ being bounded in $H^1(\R)$ and \eqref{ineq:lipschitzH^1} shows
 \begin{align*}
  &|E(\phi_n)-E_\lambda^\dav| \\
  &\lesssim (\|\phi_n'\|+\|f_n'\|) \|(\phi_n-f_n)'\|+J_1(\|\phi_n\|_{H^1}\vee\|f_n\|_{H^1})
				\left(\|\phi_n\|+ \|f_n\|\right)\|\phi_n-f_n\| \\
  &\lesssim  \|(\phi_n-f_n)'\|
			+	\|\phi_n-f_n\| \to 0
\end{align*}
as $n\to\infty$.

By energy conservation we also have $E\big(u_n(\cdot,t_n)\big)= E(\phi_n)\to E^\dav_\lambda$, i.e, it is an energy minimizing sequence, except that it might not have the correct $L^2$ norm. Since $(u_n(\cdot,t_n))_n$
is energy minimizing and its $L^2$ norm is bounded, the coercivity of the energy \eqref{eq:coercivity-2} implies that $(u_n(\cdot,t_n))_n$ is bounded in $H^1(\R)$.

To normalize the $L^2$ norm of $u_n(\cdot,t_n)$ let
$\alpha_n:=\lambda^{1/2}\|\phi_n\|^{-1}$ and set $g_n:=\alpha_n u_n(\cdot,t_n)$. From mass conservation it is clear that
$$
\|g_n\|^2=\alpha_n^2\| u_n(\cdot,t_n)\|^2=\alpha_n^2\| \phi_n\|^2=\lambda\, .
$$
Moreover, $\alpha_n \to 1$ as $n\to \infty$, since
$\|\phi_n\|^2\to \lambda$, so by  \eqref{ineq:lipschitzH^1} we also have, similarly as above,
\begin{align*}
	  |E(g_n)-E(u_n(\cdot,t_n))|
    &\leq \f{\dav}{2}\left|\|g_n'\|^2-\|u_n(\cdot,t_n)'\|	^2\right| +\left|N(g_n)-N(u_n(\cdot,t_n))\right|\\
    &\lesssim \|(g_n-u_n(\cdot,t_n))'\|+\|g_n-u_n(\cdot,t_n)\|\\
	 &\lesssim  |\alpha_n-1| \to 0 \text{ as } n\to \infty
\end{align*}
since $(g_n)_n$ and $(u_n(\cdot,t_n))_n$ are bounded in $H^1(\R)$.

Hence $(g_n)_{n\in\N}$ is a proper energy minimizing sequence.
The tightness result of \cite[Proposition 4.5]{ChoiHuLee2016},
more precisely, its extension to our more general setting,
see the second part of Remark \ref{rem:nonempty set of ground states},
shows us that there exists $K<\infty$ such that, for any $L>0$,
  \beq\label{eq:tightFourier}
    \sup_{n\in\N} \int_{|\eta|>L} |\widehat{g}_n(\eta)|^2\, d\eta \le \frac{K}{L^2} \notag
  \eeq
  where $\widehat{g}_n$ is the Fourier transform of $g_n$,
and that there exist shifts $y_n$ such that
$$
\lim_{R\to \infty} \sup _{n\in \N} \int _{|x|>R}|g_n(x-y_n)|^2 dx =0.
$$
Of course, the shifted sequence $\wti{g}_n= g_n(\cdot-y_n)$ is again
a minimizing sequence and thanks to the above bounds for $g_n$ it is tight
in the sense of measures. Since it is also bounded in $H^1(\R)$, there exist a subsequence, we still denote by $\wti{g}_n$ which converges weakly in
$H^1(\R)$ to some $\wti{g}\in H^1(\R)$, hence also weakly in $L^2(\R)$.
The tightness bounds above then imply that this subsequence also
converges strongly in $L^2(\R)$, see, for example, \cite[Lemma A.1]{HuLee2012} or \cite{Pego}.
Thus $\|\wti{g}\|^2=\lambda>0$ and since $\wti{g}_n$ is bounded in $H^1(\R)$
the inequality \eqref{ineq:lipschitzL^2} shows $N(\wti{g}_n)$ converges to
$N(\wti{g})$. Moreover, by weak convergence in $H^1(\R)$ we have
$\liminf_{n\to\infty} \|{\wti{g}_n}'\|^2 \ge  \|{\wti{g}}'\|^2 $, i.e,
the energy is lower semi--continuous under weak convergence. Since
$\wti{g}_n$ is an energy minimizing sequence, this yields
$E(\wti{g})= \lim_{n\to\infty} E(\wti{g}_n)= E^\dav_\lambda$.
Thus
$\lim_{n\to\infty} \|{\wti{g}_n}'\|^2 =  \|{\wti{g}}'\|^2$ and so
$\wti{g}_n$ \emph{converges strongly } in $H^1(\R)$ to $\wti{g}$.

Let $k_n= \wti{g}(\cdot+y_n)$. Then clearly
$k_n\in S_\lambda^\dav$ and
\begin{align*}
	\label{ineq:contradiction}
 \|u_n(\cdot,t_n) - k_n\|_{H^1}
 	&\leq \|u_n(\cdot,t_n) -g_n\|_{H^1} +\|g_n -k_n\|_{H^1}\\
 	&=|1-\alpha_n|\|u_n\|_{H^1}+\|\widetilde{g}_n-\widetilde{g}\|_{H^1}
 		\to 0
\end{align*}
as $n\to \infty$, which contradicts \eqref{not stable}.
Thus $S^\dav_\lambda$ is orbitally stable if $\dav>0$.

\smallskip

Now we come to the proof of orbital stability of $S^0_\lambda$
for saturated nonlinearities, i.e., under assumption \ref{ass:A5},
when the average dispersion is zero.
We again argue by contradiction and assume that there exist
$\veps_0>0$ and a sequence $(\phi_n)_{n\in\N}$ such that \eqref{not stable 0}
and \eqref{not stable} hold. We then choose a sequence
$f_n\in S^0_\lambda$ with $\lim_{n\to\infty}\|\phi_n-f_n\|=0$.
Arguing as in the case of $\dav>0$ we also have
$\lim_{n\to\infty}\|\phi_n\|^2=\lambda$ and, by mass conservation,
$\|u_n(t)\|^2\to \lambda$ as $n\to\infty$ uniformly
in $t\in\R$.
Moreover, since $|h(a)|\lesssim a^{p_1} + a^{p_2}$ with
$1\le p_1\le p_2<3 $ the bound \eqref{ineq:lipschitzL^2}
still applies. Hence
\begin{equation*}
  |E(\phi_n)-E_\lambda^0| =|E(\phi_n)-E(f_n)|
    	 =  |N(\phi_n) -N(f_n)|
    \lesssim 
   	\|\phi_n-f_n\| \to 0
\end{equation*}
as $n\to\infty$, since $(\phi_n)_n$ and $(f_n)_n$ are bounded in
$L^2(\R)$. Thus $E(\phi_n)\to E_\lambda ^0$ as $n\to \infty$ and
$E\big(u_n(\cdot,t_n)\big)= E(\phi_n)\to E^0_\lambda$, again  by
energy conservation. So as before $(u_n(t_n))$ is an energy
minimizing sequence, except that it might have the proper normalization
only in the limit  of large $n$.  We again normalize
this sequence, setting $g_n:=\alpha_n u_n(\cdot,t_n)$, with
$\alpha_n:=\lambda^{1/2}\|\phi_n\|^{-1}$, which converges to $1$
in the limit $n\to\infty$.
We follow the previous arguments, in the case $\dav>0$, to see that
$(g_n)_{n\in\N}$ is again an energy minimizing sequence which
is properly normalized in $L^2$.

However, at this stage we have to deviate from the arguments for
$\dav>0$, because we only know that the sequence $(g_n)_n$ is
normalized in $L^2$, and, unlike the case $\dav>0$, we do not have
any additional information about $(g_n)_n$ at this stage. In
particular, we do not know whether $\|g_n\|_\infty\le C$ uniformly in
$n$ for some constant $C<\infty$, which allowed us to use a
modification of the tightness results from \cite{ChoiHuLee2016} when
$\dav>0$, see Remark \ref{rem:existence-ground-states}.

To get around this dilemma, we note that \cite[Lemma 3.10]{HLRZ}
shows that given the minimizing sequence $(g_n)$ there exists
another minimizing sequence $(h_n)_{n\in \N}\subset L^2(\R)\cap L^\infty(\R)$ with
\begin{align*}
	\sup_{r\in \supp(\psi)} \|T_r h_n\|_{L^\infty}\leq C_\lambda \, .
\end{align*}
From the construction of this modified minimizing sequence, see the
proof of Lemma 3.10 in \cite{HLRZ}, we also know that
\beq \label{conv:gh}
\|g_n- h_n\|\to 0
\eeq
as $n\to \infty$. This allows us to apply a modification of the
tightness result \cite[Proposition 4.6]{ChoiHuLee2016} for
the sequence $(h_n)_n$, see
Remark \ref{rem:existence-ground-states}.
This yields shifts $y_n$ and boosts $\xi_n$ such that
\begin{align}
	\lim_{R\to \infty} \sup _{n\in \N} \int _{|x|>R}|h_n(x-y_n)|^2 dx &=0
	\label{eq:tightness real space}
\intertext{and}
    \lim _{L\to \infty}\sup_{n\in\N} \int_{|\eta|>L} |\widehat{h}_n(\eta-\xi_n)|^2\, d\eta &=0
    \label{eq:tightness Fourier space}
\end{align}
where $\widehat{h}_n$ is the Fourier transform of $h_n$.

Let $\wti{h}_n= e^{i\xi_n \cdot}h_n(\cdot-y_n), n \in \N$ be the
shifted and boosted sequence. Then $(\wti{h}_n)$ is again a
minimizing sequence with $\|\wti{h}_n\|_{L^2}^2=\lambda$, which is
bounded in $L^\infty$  and it is tight, i.e., the
bounds \eqref{eq:tightness real space} and
\eqref{eq:tightness Fourier space} hold with  $y_n=\xi_n=0$ and
$h$ replaced by $\wti{h}$. Using a weakly convergent
subsequence, also denoted by $\wti{h}_n$, the tightness of
$\wti{h}_n$ yields strong convergence of this subsequence, see, e.g.,
\cite[Lemma A.1]{HuLee2012} or \cite{Pego}, to some $\wti{h}$.
Using $L^2$ continuity of the energy when $\dav=0$, this function
$\wti{h}$ is a minimizer of the energy, i.e., $\wti{h}\in S^0_\lambda$,
and so are the shifted and boosted functions
$k_n= e^{-i\xi_n(\cdot+y_n)}\wti{h}(\cdot+y_n)$, where we unravel
the boosts and shifts which lead from $h_n$ to $\wti{h}_n$.
By construction, $\|h_n-k_n\|\to 0$ for $n\to\infty$.
Using
that $0<\veps_0\le \|u_n(\cdot,t_n)- f\|$ for any $f\in S^0_\lambda$
we get the contradiction
\begin{align*}
	0<\veps_0& \le \|u_n(\cdot,t_n)- k_n\|
			\le  \|u_n(\cdot,t_n)- g_n\| + \|g_n-h_n\| + \|h_n-k_n\| \\
		&= |1-\alpha_n| \|u_n\|  + \|g_n-h_n\| + \|h_n-k_n\|
			\to 0 \text{ for } n\to\infty.
\end{align*}
Hence $S^0_\lambda$ is orbitally stable, even for
saturating nonlinearities.

\smallskip
For non--saturated nonlinearities, the proof for the zero
average dispersion case is analogous to that for saturating nonlinearities, except that one can directly use
the tightness result from Proposition 4.6 in \cite{ChoiHuLee2016}
and does not have to modify the minimizing sequence to make it
bounded in $L^\infty$.
\end{proof}

\bigskip

\noindent
\textbf{Acknowledgements: }
Young--Ran Lee and Mi--Ran Choi are supported by the National Research Foundation of Korea (NRF) grants funded by the Korean government (MSIT) NRF-2020R1A2C1A01010735 and (MOE) NRF-2021R1I1A1A01045900.
Dirk Hundertmark is funded by the Deutsche Forschungsgemeinschaft (DFG, German Research Foundation) -- Project-ID 258734477 -- SFB 1173.
\renewcommand{\thesection}{\arabic{chapter}.\arabic{section}}
\renewcommand{\theequation}{\arabic{chapter}.\arabic{section}.\arabic{equation}}
\renewcommand{\thetheorem}{\arabic{chapter}.\arabic{section}.\arabic{theorem}}

\def\cprime{$'$}

\end{document}